\documentclass[11pt,a4paper,titlepage]{article}
\pdfoutput=1

\usepackage{graphicx}
\usepackage[pdftex]{hyperref}
\usepackage[english]{babel}
\usepackage{amsmath,amssymb}  
\usepackage{amsthm}          
\usepackage[usenames]{color}
\usepackage{tikz}
\usepackage[lofdepth,lotdepth]{subfig}
\usepackage{mathrsfs}
\usepackage{makeidx}
\usepackage{enumitem}
\usepackage{subfig}
\usepackage{subfloat}
\usepackage{nccmath}
\usepackage{color}

\usetikzlibrary{matrix,arrows}

\numberwithin{equation}{section}

\theoremstyle{plain}                       
\newtheorem{theorem}{Theorem}[section]     
\newtheorem{lemma}[theorem]{Lemma}          
\newtheorem{proposition}[theorem]{Proposition}
\newtheorem{corollary}[theorem]{Corollary}

\theoremstyle{definition}                    
\newtheorem{definition}[theorem]{Definition}  
\newtheorem{example}[theorem]{Example}

\theoremstyle{remark}                         
\newtheorem{remark}[theorem]{Remark}

\newcommand{\field}[1]{\mathbb{#1}} 
\newcommand{\R}{\field{R}} 
\newcommand{\N}{\field{N}} 
\newcommand{\C}{\field{C}} 
\newcommand{\Z}{\field{Z}}
\newcommand{\Q}{\field{Q}} 
\newcommand{\basisB}{\mathcal{B}}

\newcommand{\topX}{\mathscr{X}}
\newcommand{\topY}{\mathscr{Y}}
\newcommand{\topD}{\mathscr{D}}
\newcommand{\topDP}{\mathscr{D}^*}

\newcommand{\topC}{\mathscr{C}}
\newcommand{\neut}{\nu}

\newcommand{\acP}{\hat P}
\newcommand{\acE}{\hat E}

\newcommand{\Id}{\mathrm{Id}}

\newcommand{\li}{\mathrm{li}}

\newcommand{\LambdaE}[1]{\Lambda^{\hspace{-1 pt}#1}}
\newcommand{\LambdaU}[2]{\Lambda_{#2}^{\hspace{-1 pt}#1}}
\newcommand{\prodc}{\hat C}
\newcommand{\seqc}{\hat c}
\newcommand{\comments}[1]{}
\newcommand{\alt}[1]{#1'}
\newcommand{\shortmin}{\text{-}}
\newcommand{\mop}{\hspace{-1 pt}}
\newcommand{\inv}{\shortmin 1}
\newcommand{\minelt}[1]{\underline{#1}}
\newcommand{\maxelt}[1]{\overline{#1}}

\parindent=.2in              

\makeindex
\title{Expansion Systems}
\date{September 16, 2011}
\author{Victor Pessers}
\begin{document}

\selectlanguage{english}

\begin{titlepage}
\null\vfill
\begin{center}
\normalfont {\LARGE Expansion Systems\par}\bigskip {\Large Victor Pessers\par}
\bigskip {\Large 29 September 2011\par}
\end{center} 
\vfill \vfill
\begin{center}
{\Large Master Thesis}\par\bigskip {Supervisor:
Prof. Dr. A.J. Homburg}\par\medskip
\end{center}
\vfill
\begin{center}
\begin{equation*}
1+\displaystyle\int_0^{x}\cfrac{x_1\;\;dx_1}{
1-\displaystyle\int_0^{x_1}\cfrac{x_2\;\;dx_2}{
3+\displaystyle\int_0^{x_2}\cfrac{7\;\;x_3\;\;dx_3}{
5-\displaystyle\int_0^{x_3}\cfrac{\quad 221\;\;x_4\;\;dx_4\quad}{2205\;\cdot\;\text{etc.}}}}}
\end{equation*}
\end{center}
\vfill
\begin{center}
\leavevmode\normalfont KdV Institute for mathematics\par\smallskip University of
Amsterdam\par\smallskip\medskip
\includegraphics[width=0.075\hsize]{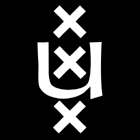}
\end{center}
\textcolor{white}{.}
\end{titlepage}

\begin{abstract}
In this thesis, we introduce {\em expansion systems} as a general framework to describe a large variety of approximation algorithms, such as Taylor approximation, decimal expansion and continued fraction. We consider some basic properties of expansion systems, and also study criteria for convergence. Further, we introduce the notion of isomorphisms between expansion systems. In the appendix, we discuss another class of expansion systems, which we call approximation systems. Many claims of convergence in this appendix, remain to be proven.
\end{abstract}

\tableofcontents

\newpage
\section*{Introduction}\label{SC introduction}
\addcontentsline{toc}{section}{Introduction}

The concept of approximation is an important concept throughout the realm of mathematics. Many different approximation algorithms have been developed, which are used in various contexts. But despite the variety of algorithms, there seem to be particular mechanisms which some of these algorithms have in common. For instance, many approximations are based on the existence of a fixed point for a certain contractive mapping, so that under the right conditions, iterated application of this mapping will approximate this fixed point. Much theory has been developed to capture this phenomenon, and its theorems are widely applicable (see for instance \cite{Ja}).

In this master thesis we will focus on a different approximation mechanism, which also seems to underlie many known approximation algorithms. It goes about as follows: we have an object $y$, on which we will apply a sequence of mappings. After each of these mappings, our $y$ has changed, and we record the essential information of this result before going to the next mapping (for example we repeatedly differentiate a function, and after each time we record its value in a point $x_0$). Then, after having recorded these values up to a certain level, we try to reverse process all these mappings, using just the data we have collected so far (for example we repeatedly integrate a function from the point $x_0$, using the recorded values in $x_0$ as the constants of integration). In many examples, this algorithm turns out to give good approximations for the initial object $y$. The more of the recorded coefficients we involve in our approximation, the more accurate it generally becomes. This observation should motivate further investigation to this general principle, and we will refer to these algorithmic structures as {\em expansion systems}.

Besides the example of Taylor series, which (as the reader may have figured out) just served to illustrate the underlying mechanism of expansion systems, there are many more approximation algorithms that can be viewed in this perspective. There is the decimal expansion, Fourier series, continued fraction, Engel expansion, Newton's forward difference formula, and so on. We will come across all of these examples as the theory of expansion systems is being unfolded.\\

In Section 1 of the thesis, a formal description of expansion systems is given, which will serve as the main framework for all later sections. 
In Section 2, we will discuss some basic features of expansion systems regarding the approximations it provides. As we will see, the objects that arise as approximations will actually have much more meaning in the context of expansion systems, than just being an approximation. In Section 3, we will develop the theory which is essential for investigating the convergence of expansion system. We will also prove some elementary convergence theorems, as well as a convergence result for the class of {\em monotonic expansion systems}. The subject of Section 4 is isomorphisms between different expansion systems. The concept of isomorphisms may help us to translate results from one expansion system to another, whenever such an isomorphism can be established.

In the final appendix, we will shed some light on a type of approximations, called {\em approximation systems}. In a somewhat different setting, these type of approximations have been the subject of the author's bachelor thesis {\em Approximation Systems} (in Dutch: Benaderingsstelsels)\cite{Pe}, and they are also the subject of the identically named  paper\cite{PeKo}, coauthored by Prof. Koornwinder. In this thesis, we will reformulate this type of approximations in the context of expansion systems. As of yet, there is still a discrepancy between the results regarding approximation systems that have been establish theoretically, and the results that have been experimentally obtained. For the treatment of the subject in the appendix, I have decided to especially share the more practical results, accepting the lack of foundation to prove them.\\

The subject of this thesis came into being during work on the article {\em Approximation Systems}. Prof. Koornwinder asked me to find out whether other types of approximation algorithms could somehow be fitted into the theory of approximation systems, and after much puzzling I decided that this was going to be difficult. However, there were nonetheless some basic principles which the approximation systems shared with other types of algorithms, and it became clear that there is in fact a general framework which all of these algorithms would fit in. When I decided to work out this idea as the subject of my master thesis, it was not quite clear to me yet, how much more I would be able to write down, besides the very definition and a couple of examples. But along the way, it appeared that there was actually quite a lot of theory to explore, and an initial habit of searching for content, soon enough changed into a habit of focusing on just a couple of basic features. I can therefore only apologize for all possible omissions, that probably would better have been included into this thesis. For example, like Prof. Homburg has remarked, it would make sense also to discuss the situation in which the expansions are linear maps. Although some of the examples in this thesis do belong to this category, such as the Taylor expansion and the Fourier series, a general theory for linear expansion systems is not given. And as Prof. Koornwinder suggested, also the theory of fast wavelet transforms could possibly be linked to expansion systems, but this hasn't made it into the final thesis either. Hopefully, future efforts will fill in gaps like these.

\newpage
\subsubsection*{Acknowledgments}

To begin with, I want to acknowledge Prof. Dijkstra and Prof. van Mill for their help with some topological issues. They have been of great support when I got stuck with a topological problem. Further, I want to acknowledge Prof. Koornwinder. After all, it have been the fruitful conversations with him, that have been the main inspiration for this thesis, and I thank him for the great care with which he has guided me in the last couple of years. Finally, I want to thank my supervisor Prof. Homburg. Besides all his other help, I am very grateful for his willingness and the freedom he has given me, to let me work on something what was actually just a spontaneous idea.

\newpage
\section{Description of Expansion Systems}

We consider the following situation: we have a sequence of so called {\em element spaces}\index{Element!space} $S = \{S_i\}_{i\in \N}$, where $\N={0,1,\ldots}$, and in each set $S_i$ there is a {\em neutral element}\index{Element!neutral} $\neut_i\in S_i$\index{zsymbols@\textbf{Symbols}!nu@$\neut_i$}. For instance, if our $S_i$ are additive spaces, this will often be the zero element.
Further we have another sequence of so called {\em coefficient spaces}\index{Coefficient!space} $C = \{C_i\}_{i\in \N}$\index{zsymbols@\textbf{Symbols}!S@$S,S_i,C,C_i$}. Between these sets, we have the following collection of injections: $F_i = (P_i,E_i)$\index{zsymbols@\textbf{Symbols}!E@$E,E_i,P,P_i,F,F_i$}
, where $P_i\colon S_i \rightarrow C_{i}$ and $E_i\colon S_i \rightarrow S_{i+1}$, mapping the neutral element of $S_i$ to the neutral element of $S_{i+1}$, i.e.
\begin{equation}\label{EQ neut elt}
E_i(\neut_i) = \neut_{i+1}.
\end{equation}
Note that $P_i$ and $E_i$ themselves don't need to be injective. In diagram this looks like shown in Figure \ref{FI structure es}.

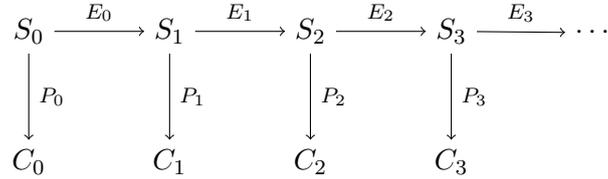
\begin{figure}[h]
\centering
\begin{tikzpicture}[descr/.style={fill=white,inner sep=2.5pt}]
 \matrix (m) [matrix of math nodes, row sep=3em,
 column sep=3em]
 { S_0 & S_1 & S_2 & S_3 & \cdots \\
   C_0 & C_1 & C_2 & C_3 &  \\ };
 \path[->,font=\scriptsize]
 (m-1-1) edge node[auto] {$ E_0 $} (m-1-2)
         edge node[auto] {$ P_0 $} (m-2-1)
 (m-1-2) edge node[auto] {$ E_1 $} (m-1-3)
         edge node[auto] {$ P_1 $} (m-2-2)
 (m-1-3) edge node[auto] {$ E_2 $} (m-1-4)
         edge node[auto] {$ P_2 $} (m-2-3)
 (m-1-4) edge node[auto] {$ E_3 $} (m-1-5)
         edge node[auto] {$ P_3 $} (m-2-4);
 \end{tikzpicture}
\caption{Structure of expansion system}
\label{FI structure es}
\end{figure}
Such a system~$$\Psi=(S,C,F):=(\{S_i\}_{i\in \N},\{C_i\}_{i\in \N},
\{F_i\}_{i\in \N})$$\index{zsymbols@\textbf{Symbols}!Psi@$\Psi$}
we call an {\em expansion system}\index{Expansion system}, or ES for short.\footnote{Whenever we refer to a sequence of element spaces $S$, we implicitly assume the existence of a sequence $\neut_i$ of neutral elements, even when not mentioned explicitly what these neutral elements are.} Each $P_i$ in our ES will typically function as a projection on some simpler space, such as $\Z$ or $\C$. The mapping $E_i$ typically maps $S_i$ onto a similar space $S_{i+1}$, but thereby losing some information which is contained in the value of $P_i$.
\begin{example}\label{EX dec exp 1}
For $i\in \N$ let $(S_i,\neut_i)=([0,1),0)$, the set of real numbers greater or equal to $0$ and smaller than $1$, with $0$ being the neutral element. Let $C_i=\{0,1,\ldots,9\}$. Furthermore, let $P_i\colon S_i\rightarrow C_i$ be the projection onto the first digit after the dot (i.e. $P_i\,y:=\lfloor 10 y\rfloor$, where $\lfloor\cdot\rfloor$ denotes the floor function), and let $E_i\colon S_i\rightarrow S_{i+1}$ be defined as $E_i\,y := 10y-P_i\,y$. This expansion satisfies \eqref{EQ neut elt}. Now $F_i:=(P_i,E_i)$ is injective, for the equations
$$
\lfloor 10 y\rfloor=\lfloor 10 y'\rfloor,\quad\text{and}\quad
  10y-\lfloor 10 y\rfloor=10y'-\lfloor 10 y'\rfloor,
$$
imply of course that $y=y'$. Hence $\Psi = (S,C,F)$ defines an expansion system.
\end{example}
Note that in the example above, when we apply $E_i$ to some element $y\in S_i$, we have lost some information about $y_i$ in the value $E_i\,y$, namely the value of its first digit after the dot. This information is then contained in the value $P_i\,y$. But despite our loss of information, $E_i$ really is a surjective mapping from $S_i$ to $S_{i+1}$, which spaces are both equal to $[0,1)$. This is possible because of the expanding nature of $E_i$. As this is the typical pattern in most ES's, we will refer to the mappings $P_i$ as our {\em projections}\index{Projection}, and to the mappings $E_i$ as our {\em expansions}\index{Expansion}. However, the mappings $P_i$ and $E_i$ do not necessarily have to obey this particular pattern, and the words projections and expansions are therefore merely terminology.\\

Because all $F_i$ are injective, we also have a sequence of inverses {${F_i}^{\inv}\colon F_i(C_i\times S_i)\rightarrow S_i$}. Now given an $y\in S_0$, we define $y_0:=y$ and $y_{i+1}:=E_i\,y_i$\index{zsymbols@\textbf{Symbols}!y@$y,y_i,c,c_i$}, and furthermore $c_i:=P_i\,y_i$, so that we get $y_i = {F_i}^{\inv}(y_{i+1},c_i)$. These $c_i$ are called {\em coefficients}\index{Coefficient}, and the full sequence $\seqc:=(c_0,c_1,\ldots)$ is called the {\em coefficient code}\index{Coefficient!code}.\index{zsymbols@\textbf{Symbols}!c_seq@$\seqc$}

\begin{remark}
For convenience, we will adopt the special notation $\acE_i$\index{zsymbols@\textbf{Symbols}!E1@$\hat E_i,\hat P_i$} for the accumulative composition of the first $i$ expansions, so that we have $\acE_i\,y = E_i \cdots E_0\,y = y_i$. Likewise, we write $\acP_i$ for the composition of $P_i$ with $\acE_i$, so that we have $\acP_i\,y = P_i\,E_i \cdots E_0\,y = c_i$. Also to the $\acP_i\colon S_0 \rightarrow C_i$, we will refer to as projections, and the Cartesian product of all these projections gives rise to the so called {\em coefficient map}\index{Coefficient!map}
$$\acP\colon S_0 \rightarrow \prodc,\ \ \ y \mapsto (c_0,c_1,c_2,\ldots),$$
where $\prodc:=\prod_{i\in\N} C_i$\index{zsymbols@\textbf{Symbols}!C_prod@$\prodc$} stands for the Cartesian product of all the coefficient spaces. In a certain sense, this map codifies elements of $S_0$ as a list of coefficients from the spaces $C_i$, but it does so not necessarily injective.
\end{remark}

Having settled down the prerequisites of an ES $\Psi$, it turns out that under the right conditions 
our $\Psi$ gives rise to an approximation $y^{[n]}$ for $y\in S$, based on its coefficients $\seqc=(c_0,c_1,\ldots)=\acP\,y$. This approximation is constructed as follows:
\begin{align}
y_n^{[n]}&:=\neut_n\\
y_i^{[n]}&:={F_i}^{\inv}(c_i,y_{i+1}^{[n]})\qquad (i = n-1,n-2,\ldots,0)
\end{align}\index{zsymbols@\textbf{Symbols}!y1@$y^{[n]},y_i^{[n]}$}
provided that
\begin{equation}\label{EQ F proper}
(c_{i},y_{i+1}^{[n]})\in F_{i}(S_{i}),\qquad\text{for\ }i=n-1,n-2,\ldots,0.
\end{equation}
Our ultimate $n$-th order approximation will then be:
\begin{equation}
y^{[n]} := y_0^{[n]}
\end{equation}

The above construction of $y^{[n]}$, leads us to the following definitions:

\begin{definition}
An expansion system $\Psi$ is called {\em proper}\index{Expansion system!proper} at order $n$ w.r.t. $y\in S_0$ (or equivalently, to the coefficients $c_0,c_1,\ldots,c_{n-1}$), if it satisfies \eqref{EQ F proper} throughout the process of constructing $y^{[n]}$. Only when dealing with an ES which is proper at order $n$, this construction will succeed, and are we allowed to talk about the $n$-th order approximation of $y$, also called the $n$-th {\em convergent}\index{Element!convergent (object)} of $y$. When $\Psi$ is proper at any order $n\comments{\in \N}$ w.r.t. $y$, we say $\Psi$ is (completely) proper w.r.t. $y$. Also, when we say that $\Psi$ is proper without mentioning w.r.t. which element, it is assumed that it is proper w.r.t. any element in $S_0$. As this concept will return quite frequently, we will make use of special notation for the set of elements w.r.t. which $\Psi$ is proper:
\begin{equation}
S_0^{(n)}:=\{y\in S_0\,:\,\text{$\Psi$ is proper at order $n$ w.r.t. $y$}\}.
\end{equation}\index{zsymbols@\textbf{Symbols}!S1@$S_0^{(n)},S_0^{(\leq n)},S_0^{(< \infty)}$}
Likewise, we let $S_0^{(\leq n)}$ and $S_0^{(< \infty)}$ denote the elements in $S_0$, w.r.t. which $\Psi$ is proper up to and including order $n$, respectively completely proper.
\end{definition}

In situations where we are dealing with multiple ES's at the same time, confusion might arise about relative to which ES we are determining the approximating $y^{[n]}$. In that case we will write $y^{[\Psi,n]}$\index{zsymbols@\textbf{Symbols}!y1psi@$y^{[\Psi,n]},y_i^{[\Psi,n]}$} instead, to stress the ES we are working with.

We will end this section with a simple proposition about properness, and give some applications to illustrate its use.

\begin{proposition}\label{PR bijective F}
Suppose $\Psi$ is such that all $F_i\colon S_i\rightarrow S_{i+1}\times C_i$ are bijective. Then $\Psi$ is proper.
\end{proposition}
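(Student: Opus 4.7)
The plan is to unwind the definition of properness and observe that the surjectivity half of bijectivity forces the defining condition \eqref{EQ F proper} to hold vacuously. First I would recall that properness of $\Psi$ at order $n$ with respect to $y \in S_0$ requires, for each $i = n-1, n-2, \ldots, 0$, that the pair $(c_i, y_{i+1}^{[n]})$ lie in $F_i(S_i)$, where the sequence $(y_i^{[n]})$ is built downward from $y_n^{[n]} := \neut_n$ via $y_i^{[n]} := F_i^{\inv}(c_i, y_{i+1}^{[n]})$.

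Next I would invoke the hypothesis. Since each $F_i$ is bijective onto $S_{i+1} \times C_i$, it is in particular surjective, so $F_i(S_i) = S_{i+1} \times C_i$. Thus every pair whose components live in the right factors automatically lies in $F_i(S_i)$, and the condition \eqref{EQ F proper} is satisfied no matter what $c_i \in C_i$ and $y_{i+1}^{[n]} \in S_{i+1}$ happen to be.

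The only small formal point to verify is that the downward recursion does produce well-typed elements at every stage, which I would settle by a short induction: $y_n^{[n]} = \neut_n \in S_n$ by definition, and assuming $y_{i+1}^{[n]} \in S_{i+1}$, the pair $(c_i, y_{i+1}^{[n]})$ belongs to the domain of $F_i^{\inv}$ by the preceding paragraph, and $F_i^{\inv}$ takes values in $S_i$, so $y_i^{[n]} \in S_i$. Hence the construction of $y^{[n]}$ succeeds for every $y \in S_0$ and every $n \in \N$, which is exactly the statement that $\Psi$ is proper.

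There is essentially no obstacle here; the content of the proposition is just that bijectivity of $F_i$ means $F_i(S_i)$ exhausts the codomain $S_{i+1} \times C_i$, which is precisely what \eqref{EQ F proper} asks for. I would keep the write-up to a few lines and resist elaborating beyond the induction above.
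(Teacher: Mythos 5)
Your proposal is correct and takes essentially the same approach as the paper: surjectivity of each $F_i$ makes $F_i(S_i)$ all of $S_{i+1}\times C_i$, so condition \eqref{EQ F proper} holds automatically at every stage of the downward recursion. The small induction you add to confirm that each $y_i^{[n]}$ lands in $S_i$ is implicit in the paper's phrasing and is a reasonable touch of explicitness.
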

\begin{proof}
We have for $i=n,n-1,\ldots,1$ that, given $y_i^{[n]}\in S_i$ and $c_{i-1}\in C_{i-1}$, the surjectivity of $F_{i-1}$ implies $(c_{i-1},y_i^{[n]})\in F_{i-1}(S_{i-1})$. Hence \eqref{EQ F proper} is always satisfied.
\end{proof}

Note that we could just as well have required $F_i$ to be surjective, as we already have that it is injective by definition of an ES. This proposition implies for instance properness of the ES $\Psi$ in Example \ref{EX dec exp 1}. To see this, let's consider this ES once more here.

\begin{example}[\bf Decimal Expansion]\label{EX dec exp 2}
Let $\Psi$ be as in Example \ref{EX dec exp 1}. It is proper by Proposition \ref{PR bijective F}, as we have that each $F_i$ is bijective, and its inverse is given by:
$$
F_i^{\inv}(c,y)=\frac{c+y}{10}.
$$
So if the coefficients of some $y\in S_0$ are $c_0,c_1,\ldots$, then we get as $n$-th order approximation:
\begin{align*}
y_n^{[n]} &= 0\\
y_{n-1}^{[n]} &= \frac{c_{n-1}}{10}\\
y_{n-2}^{[n]} &= \frac{c_{n-2}}{10}+\frac{c_{n-1}}{100}\\
\ldots\\
y_0^{[n]} &= \frac{c_{0}}{10} + \frac{c_1}{100} + \ldots + \frac{c_{n-1}}{10^{n-1}}.
\end{align*}
In other words, the $n$-th order approximation coincides with the decimal number:
$$
y^{[n]}=0.c_0c_1c_2\ldots c_{n-1}
$$
It is good to realize that it really requires a proof that this particular sequence of decimal approximations is not only convergent, but that the number it converges to is also the number we used as input of the algorithm, so that indeed we may suggestively write:
$$
y = 0.c_0c_1c_2c_3\ldots
$$
Although an ad hoc proof of this wouldn't be that difficult to provide, we will postpone this to Theorem \ref{TH monotonic es}, where a more general result will be proven. Further it should be remarked that, although the space $S_0$ is restricted to the set of non-negative numbers smaller than $1$, we can quite easily adept the ES so that it can take bigger numbers as input. If we want $S_0$ to be $\R_{\geq 0}$, we just put $P_0\,y = \lceil\log_{10}\,y\rceil$ for non-zero $y$, and $E_0\,y = y/10^{P_0(y)}$ for non-zero $y$, and $F_0(0)=(0,0)$ for the case $y=0$. This extends the ES to non-negative numbers of arbitrary size.
\end{example}

\begin{example}[\bf Continued Fraction]\label{EX cont frac}
Just as in Example \ref{EX dec exp 1}, we let $(S_i,\neut_i)=([0,1),0)$, but now we let $C_i=\Z_{\geq 1}\cup\{\infty\}$. Furthermore, let $P_i$ be given by:
$$
P_i\,y = \begin{cases} \infty &\text{if}\quad y = 0\\
\lfloor 1/y\rfloor, &\text{otherwise}, \end{cases}
$$
and $E_i$ be given by
$$
E_i\,y = \begin{cases} 0 &\text{if}\quad y = 0\\
1/y-P_i\,y &\text{otherwise}. \end{cases}
$$
This satisfies Equation \ref{EQ neut elt}, and $F_i = (P_i,E_i)$ is a bijection, for which we have as inverse:
$$
F_i^{\inv}(c,y)= \begin{cases} 0 &\text{if}\quad c = \infty\\
\frac{1}{c+y} &\text{otherwise}. \end{cases}
$$
By Proposition \ref{PR bijective F}, $\Psi$ is proper and for its $n$-th approximation we find:
\begin{align*}
y_n^{[n]} &= 0\\
y_{n-1}^{[n]} &= \frac{1}{c_{n-1}}\\
y_{n-2}^{[n]} &= \cfrac{1}{c_{n-2}+\cfrac{1}{c_{n-1}}}\\
\ldots\\
y_0^{[n]} &= \cfrac{1}{c_0 + \cfrac{1}{c_1+\cfrac{1}{\ldots+\cfrac{1}{c_{n-1}}}}}
\end{align*}
It turns out that these rational approximations converge to the initial element $y$ as $n$ goes to infinity (cf. \cite[\S1.6]{BrSt}), which justifies the concise notation:
\begin{equation}\label{EQ cont frac}
y = c_0 + \cfrac{1}{c_1+\cfrac{1}{c_{2}+\cfrac{1}{c_3+\ldots}}}
\end{equation}
Just as for the decimal expansion, we will prove later on in this thesis that this is indeed valid for all $y\in S_0$.
\end{example}

\newpage
\section{Orders and Convergents}\label{SC orders}

In the previous section, we have seen how an expansion system codifies its elements in a sequence of coefficients. We have also discussed how these coefficients on its turn can give rise to an approximation.
In this section, we will see that the approximations are characterized by the property that they are eventually mapped by the expansion sequence $E$ to a neutral element $\neut_n\in S_n$. The values of its first $n$ coefficients will coincide with the element it approximates. We will also see that the existence of elements with these properties, is at the same time a guarantee of properness. Throughout all the propositions in this section, we will assume we are given some expansion system $\Psi$, and we again adopt the notation of the introduction for all of its components.

\begin{definition}\label{DF order}
We call $y\in S_0$ an element of {\em finite order}\index{Element!order}, if for certain $n\in\N$ we have $y_n = \neut_n$. The smallest such $n$, we call the order of $y$. If $y$ is not of finite order, we say it is of {\em infinite order}. As with the property of properness, we will use special notations $S_0^{[n]}$\index{zsymbols@\textbf{Symbols}!S2@$S_0^{[n]},S_0^{[\leq n]},S_0^{[< \infty]},S_0^{[\infty]}$}, $S_0^{[\leq n]}$, $S_0^{[< \infty]}$, and $S_0^{[\infty]}$ to denote the elements in $S_0$ respectively of order $n$, order less than or equal $n$, finite order, and infinite order.
\end{definition}

\begin{remark}\label{RM alt def order}
By \ref{EQ neut elt}, the order of an element $y$ could also be characterized by saying: $y$ has order $n$, iff $y_m = \neut_m$ for all $m\geq n$. This is equivalent to the definition above.
\end{remark}

\begin{theorem}\label{TH char prop}
We have $y\in S_0^{(n)}$, iff there is an element $y'\in S_0^{[n]}$, having the same coefficients up to and including order $n-1$, i.e. $\acP_i\,y' = \acP_i\,y = c_i$ for $i\leq n-1$. If this is the case, then $y'$ is unique and equals $y^{[n]}$.\footnote{Throughout this thesis, primes are only used to distinguish symbols (e.g. $y$ and $y'$), and do never have the meaning of differentiation.}
\end{theorem}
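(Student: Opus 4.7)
The plan is to match, term by term, the sequence $y'_0,y'_1,\ldots,y'_n$ obtained by iterating the expansions on a candidate $y'$, against the sequence $y^{[n]}_0,\ldots,y^{[n]}_n$ produced by the backward construction of the approximation. Injectivity of each $F_i$, which is part of the definition of an ES, forces the two sequences to agree level by level, and this single fact will supply both implications together with uniqueness.

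For the direction $(\Leftarrow)$, I would take a $y'$ satisfying $\acE_n y'=\neut_n$ (which is what Remark \ref{RM alt def order} extracts from $y'\in S_0^{[n]}$) together with $\acP_i y'=c_i$ for all $i\leq n-1$, and run a reverse induction on $i=n,n-1,\ldots,0$, showing simultaneously that the construction of $y^{[n]}_i$ succeeds and that $y^{[n]}_i=y'_i$. The base case $i=n$ is the definition $y^{[n]}_n:=\neut_n=y'_n$. For the inductive step, compute
\[
F_i(y'_i)=(P_i y'_i,E_i y'_i)=(c_i,y'_{i+1})=(c_i,y^{[n]}_{i+1}),
\]
which verifies the properness condition \eqref{EQ F proper} at index $i$ and, by injectivity of $F_i$, forces $y^{[n]}_i=F_i^{\inv}(c_i,y^{[n]}_{i+1})=y'_i$. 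At $i=0$ this yields $y^{[n]}=y_0^{[n]}=y'$; taken over all $i$ it delivers properness of $\Psi$ at order $n$ w.r.t.\ $y$.

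For the direction $(\Rightarrow)$, assume $y\in S_0^{(n)}$ so that $y^{[n]}_0,\ldots,y^{[n]}_n$ all exist, and set $y':=y^{[n]}$. I would prove by forward induction on $i=0,1,\ldots,n$ that $\acE_i y'=y^{[n]}_i$. The base case $i=0$ is the definition $y'=y_0^{[n]}$. For the step, rewriting the recursion as $F_i(y^{[n]}_i)=(c_i,y^{[n]}_{i+1})$ delivers $E_i y^{[n]}_i=y^{[n]}_{i+1}$ and $P_i y^{[n]}_i=c_i$; combined with the induction hypothesis this gives $\acE_{i+1}y'=y^{[n]}_{i+1}$ and, as a by-product, $\acP_i y'=c_i$ for every $i\leq n-1$. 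Taking $i=n$ yields $\acE_n y'=\neut_n$, so $y'$ has finite order (at most $n$, which is all one needs in view of the characterisation in Remark \ref{RM alt def order}).

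Uniqueness of $y'$ is immediate from $(\Leftarrow)$, which in fact establishes the stronger statement that any candidate must agree with $y^{[n]}$ term by term. I do not foresee any genuine obstacle — the argument is essentially bookkeeping — beyond keeping the two opposite induction directions straight: the approximation is built backwards starting at $\neut_n$, while verifying that it reproduces the correct coefficients runs forwards along the diagram, with the injectivity of each $F_i$ acting as the bridge at every level.
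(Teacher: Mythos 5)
Your proposal is correct and follows essentially the same route as the paper's own proof: the forward direction verifies $\acE_i\,y^{[n]}=y_i^{[n]}$ so that $y^{[n]}$ itself serves as $y'$, and the backward direction runs the construction downward, using $y'_i$ as the witness that $(c_i,y_{i+1}^{[n]})\in F_i(S_i)$ and injectivity of $F_i$ to force $y_i^{[n]}=y'_i$, with uniqueness falling out of that same matching. The only difference is presentational — you phrase the two directions as explicit inductions — and you are in fact slightly more careful than the paper in noting that what one obtains is order at most $n$.
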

\begin{proof}
Proof of the forward implication: one verifies that it follows directly from the construction of $y^{[n]}$ that $E_i\,y_i^{[n]}=y_{i+1}^{[n]}$, and so we have for the composition $\acE_i$ of expansions:
\begin{equation}\label{EQ convergent}
(y^{[n]})_i = \acE_i\,y^{[n]} = E_i\cdots E_0\,y_0^{[n]} = y_i^{[n]}
\end{equation}
and thus in particular $(y^{[n]})_n = y_n^{[n]} = \neut_n$, which means $y^{[n]}\in S_0^{[n]}$.  Moreover, \eqref{EQ convergent} gives us that:
$$\acP_i\,y^{[n]} = P_i\,(y^{[n]})_i = P_i\,y_i^{[n]} = P_i\,{F_i}^{\inv}(y_{i+1}^{[n]},c_i) = c_i = \acP_i\,y.$$
Hence $y^{[n]}$ satisfies the requirements for our $y'$.\\
Now we prove the backward implication. We just carry out the construction of $y^{[n]}$, and let $y_n^{[n]}=\neut_n$. Because we know that $y'_{n-1}$ is an element in $S_{n-1}$ for which $F_{n-1} \,y'_{n-1} = (\neut_n,c_{n-1})$, it follows that $y_{n-1}^{[n]}={F_i}^{\inv}(y_{n}^{[n]},c_{n-1})$ is well defined, and equals $y'_{n-1}$. The same observation now applies to $F_{n-1}\,y'_{n-2} = (y'_{n-1},c_{n-2})$, so that $y_{n-2}^{[n]} = y'_{n-2}$. Continuing this process, we get at the last step that $y_{0}^{[n]} = y^{[n]}$ is well defined and equals $y'_0 = y'$. Because of the arbitrarity of $y'$, and the fact that $y_{0}^{[n]}$ is uniquely determined by its construction, the additional remark follows that this $y'$ is unique.
\end{proof}

\begin{corollary}\label{CR conv wrt fin order}
Given $y\in S_0^{[n]}$, we have that $y\in S_0^{(\geq n)}$, and $y^{[m]}=y$ for all $m\geq n$.
\end{corollary}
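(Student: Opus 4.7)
The plan is to derive both conclusions in one stroke by applying Theorem \ref{TH char prop} with $y$ itself playing the role of the witness $y'$. Fix any $m \geq n$. To invoke the backward implication of the theorem, I need to exhibit some element whose $m$-th iterate under the expansion sequence is neutral and whose first $m$ coefficients agree with those of $y$. The natural candidate is $y$ itself.

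First I would verify the "neutral iterate" condition. Since $y \in S_0^{[n]}$, Remark \ref{RM alt def order} gives $y_k = \neut_k$ for every $k \geq n$; in particular $y_m = \neut_m$, so $y$ meets the property required of the witness in Theorem \ref{TH char prop}. Second, the coefficient-matching condition $\acP_i\,y = \acP_i\,y$ for $i \leq m-1$ is of course trivial. Hence Theorem \ref{TH char prop} applies with $y' := y$ and yields $y \in S_0^{(m)}$. Ranging over all $m \geq n$ establishes $y \in S_0^{(\geq n)}$.

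For the second assertion, the uniqueness clause in Theorem \ref{TH char prop} states that the witness $y'$ is unique and equals $y^{[m]}$. Since I chose $y' = y$, this immediately gives $y^{[m]} = y$ for every $m \geq n$.

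The only subtlety is purely notational: $S_0^{[n]}$ is defined as elements of order exactly $n$, whereas the hypothesis needed in Theorem \ref{TH char prop} is the weaker condition $y_m = \neut_m$. Remark \ref{RM alt def order} is precisely the bridge between the two, and it is the only nontrivial input beyond a direct appeal to the theorem. No further calculation is required.
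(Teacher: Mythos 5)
Your argument is correct and is essentially the paper's own proof: both use Remark \ref{RM alt def order} to get $y_m = \neut_m$ for all $m \geq n$, and then let $y$ itself serve as the witness in Theorem \ref{TH char prop}, with the uniqueness clause giving $y^{[m]} = y$. Your explicit remark about the mismatch between ``order exactly $n$'' and the weaker condition $y_m = \neut_m$ is a sensible clarification that the paper handles only implicitly.
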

\begin{proof}
This follows from the observation that by Remark \ref{RM alt def order} $y_m = \neut_m$ for all $m\geq n$. So for each order $m\geq n$, $y$ itself serves as the looked after element $z$, and so we have $y^{[m]}=y$.
\end{proof}

\begin{remark}\label{RM impl theor}
Theorem \ref{TH char prop} not only provides another formulation for properness, it also characterizes the $n$-th convergent $y^{[n]}$ of an element $y$ as the unique $n$-th order element $z$ in $S_0$, having the same coefficients as $y$ up to and including order $n-1$. The corollary on its turn, implies that
$$S_0^{[n]} = \{y^{[n]}\,:\,y\in S_0^{(n)}\}.$$
In particular we have that $S_0^{[n]}\subset S_0^{(\geq n)}$.
\end{remark}

We can apply Theorem~\ref{TH char prop} to the following example of an expansion system:

\begin{example}{\rm\bf Taylor Series}\label{EX taylor}\\
Let each $S_i$ be the space of holomorphic functions on some open neighborhood $U\subset \C$ of a point $x_0\in \C$, and let $C_i = \C$ and $\neut_i$ the zero function. Furthermore, given a function $y(x)\in S_i$, let $P_i\,y = y(x_0)$ and
$$E_i y(x) = \frac{y(x)-P_i\,y}{x-x_0},$$
the value in $x_0$ being defined by its limit. One verifies that for given $y(x) = c_0 + c_1 (x-x_0) + \ldots$, we have $\acP_i\,y = c_i$. We have that $F_i=(P_i,E_i)$ satisfies \ref{EQ neut elt}, and
furthermore $\Psi=(S,C,F)$ is proper, for given an order $n$, we have that $z = c_0 + c_{1} (x-x_0) + \ldots + c_{n-1} (x-x_0)^{n-1}$ satisfies Theorem~\ref{TH char prop}. (We could also have invoked Proposition \ref{PR bijective F}, noting that every holomorphic function has an integral with prescribed value in the point $x_0$.) In this ES, the elements of order $n$ are just the polynomials of order $n-1$ (the zero polynomial in case $n=0$). The elements of infinite order are exactly all non-polynomial holomorphic functions. The $n$-th order approximation coincides with the $n-1$-th Taylor approximation at the point $x_0$.
\end{example}

It is important to realize that if an ES $\Psi$ is proper w.r.t. $y$ at order $n$, this doesn't necessarily mean that $\Psi$ is also proper w.r.t. $y$ at orders below $n$, nor that it is proper at orders above $n$. We will use a variation on Example \ref{EX taylor} as a counterexample.

\begin{example}\label{EX taylor count ex}
Again we consider the expansion system $\Psi$ as given in Example \ref{EX taylor}, but this time we let $x_0 = 0$ and $U$ be the closed interval $[0,1]$. Further, we let $\neut_i$,$C_i$,$P_i$ and $E_i$ just be as in Example \ref{EX taylor}, but we restrict each $S_i$ only to those elements $y_i$, such that $\|y_j\|\leq 1$ for all $j\geq i$, i.e. $y_i$ remains in absolute value $\leq 1$ after composition with arbitrarily many expansion $E_j$, $j=i,i+1,\ldots$. In particular, the elements in $S_i$ itself are of absolute value $\leq 1$. Note that by this condition, we still have $E_i(S_i)\subset S_{i+1}$. Now consider the polynomial $y=\frac{1}{2}+x-x^2+x^3-x^4$. We can calculate that:
\begin{align*}
\|y_0\| &= \|\frac{1}{2}+x-x^2+x^3-x^4\|\approx 0.60583<1\\
\|y_1\| &= \|1-x+x^2-x^3\|=1\\
\|y_2\| &= \|-1+x-x^2\|=1\\
\|y_3\| &= \|1-x\|=1\\
\|y_4\| &= \|-1\|=1\\
\|y_5\| &= \|y_6\| = \ldots = \|0\| = 0.
\end{align*}
All these $y_i$ have absolute value $\leq 1$ on the interval $[0,1]$, and so $y_i\in S_i$. It turns out that $y\in S_0^{(3)}$, as we have for $z = \frac{1}{2} + x - x^2$:
\begin{align*}
\|z_0\| &= \|\frac{1}{2}+x-x^2\|= \frac{3}{4}<1\\
\|z_1\| &= \|1-x\|=1\\
\|z_2\| &= \|-1\|=1\\
\|z_3\| &= \|z_4\| = \ldots = \|0\| = 0.
\end{align*}
So $z$ is an element in $S_0^{[3]}$, and it satisfies Theorem \ref{TH char prop}, by which we may conclude properness at order $3$ w.r.t. $y$. But for order $n=2$, our only candidate is $z = \frac{1}{2}+x$, but $\|\frac{1}{2}+x\|=1\frac1 2>1$, so there can be no such $z$ in $S_0$. Likewise, for order $n=4$, we have $\frac{1}{2}+x-x^2+x^3$ as only candidate, but $\|\frac{1}{2}+x-x^2+x^3\|=1\frac1 2>1$, so this $z$ does not lie in $S_0$ either. Hence $\Psi$ is proper w.r.t. $y$ at order $3$, but not at order $2$ and $4$.
\end{example}

\begin{proposition}\label{PR eq coef}
Suppose $y\in S_0^{(n)}$, and $y'\in S_0$ has the same first $n$ coefficients as $y$, i.e. $\acP_i\,y =\acP_i\,y'$ for $0\leq i\leq n-1$. Then also $y'\in S_0^{(n)}$, and we have $y^{[n]}={y'}^{[n]}$.
\end{proposition}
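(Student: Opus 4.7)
The proof will be a direct application of Theorem \ref{TH char prop} used in both directions, with no real computation required. The key observation is that Theorem \ref{TH char prop} characterizes properness at order $n$ purely in terms of the existence of an element of order $n$ with matching initial coefficients, and this characterizing data depends only on the first $n$ coefficients $c_0,\ldots,c_{n-1}$ and not on $y$ itself.

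First, I would invoke the forward direction of Theorem \ref{TH char prop} applied to $y$: since $y \in S_0^{(n)}$, there exists a (unique) element $z := y^{[n]} \in S_0^{[n]}$ with $\acP_i z = \acP_i y$ for every $i \leq n-1$. Next, using the hypothesis $\acP_i y = \acP_i y'$ for $i \leq n-1$, I would observe that this same $z$ satisfies $\acP_i z = \acP_i y'$ for $i \leq n-1$, so $z$ serves as a witness element of order $n$ with the right coefficients relative to $y'$.

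Now I would apply the backward direction of Theorem \ref{TH char prop} to $y'$, with this $z$ as the required element of $S_0^{[n]}$ having matching initial coefficients. The conclusion of that theorem yields both $y' \in S_0^{(n)}$ and ${y'}^{[n]} = z = y^{[n]}$, which is exactly what we need. The uniqueness clause of Theorem \ref{TH char prop} is what guarantees the equality ${y'}^{[n]} = y^{[n]}$ rather than merely the existence of a common element.

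There is no real obstacle here; the entire proposition is essentially a restatement of the fact that Theorem \ref{TH char prop}'s characterization of properness and of the $n$-th convergent depends only on the data $(c_0,\ldots,c_{n-1})$. I would keep the proof to two or three sentences, making explicit that uniqueness is invoked for the identification of the convergents.
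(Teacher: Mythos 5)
Your proposal is correct and follows exactly the same route as the paper's own proof: apply the forward direction of Theorem \ref{TH char prop} to $y$ to produce the witness $y^{[n]}\in S_0^{[n]}$, note it shares the first $n$ coefficients with $y'$ by hypothesis, and then apply the backward direction (with its uniqueness clause) to $y'$ to conclude both claims. Nothing is missing.
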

\begin{proof}
By Theorem \ref{TH char prop} we conclude that there is an element $y^{[n]}\in S_0^{[n]}$ with the same first $n$ coefficients as $y$. So by assumption, $y^{[n]}$ also has the same first $n$ coefficients as $y'$. Therefore, again by Theorem \ref{TH char prop}, we have that $y'\in S_0^{(n)}$, and $y^{[n]}={y'}^{[n]}$.
\end{proof}

\begin{corollary}
Suppose for certain $n$ and $m\leq n$ we have $y\in S_0^{(n)}\cap S_0^{(m)}$, i.e. $\Psi$ is proper w.r.t. $y$ both at order $n$ and $m$. Then also $y^{[n]}\in S_0^{(n)}\cap S_0^{(m)}$, and we have $(y^{[n]})^{[m]} = y^{[m]}$, i.e. the $m$-th order convergent of $y^{[n]}$ equals the $m$-th order convergent of $y$ itself. In particular $(y^{[n]})^{[n]} = y^{[n]}$.
\end{corollary}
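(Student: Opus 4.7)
The plan is to deduce everything directly from Theorem \ref{TH char prop} together with Proposition \ref{PR eq coef}, which do almost all of the work. First I would observe that since $y\in S_0^{(n)}$, Theorem \ref{TH char prop} tells us that $y^{[n]}$ is itself an element of $S_0$ (in fact, of $S_0^{[n]}$) and that its coefficients satisfy $\acP_i\,y^{[n]} = \acP_i\,y$ for all $i\leq n-1$. In other words, $y^{[n]}$ and $y$ agree on their first $n$ coefficients.

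Having established that, I would apply Proposition \ref{PR eq coef} at order $n$, with $y'=y^{[n]}$: since $y\in S_0^{(n)}$ and $y^{[n]}$ shares its first $n$ coefficients with $y$, the proposition yields $y^{[n]}\in S_0^{(n)}$ and $(y^{[n]})^{[n]} = y^{[n]}$, which is exactly the ``in particular'' statement of the corollary.

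For the general case $m\leq n$, the key point is simply that agreement on the first $n$ coefficients forces agreement on the first $m$ coefficients. Thus, from $\acP_i\,y^{[n]} = \acP_i\,y$ for $i\leq n-1$ and the assumption $y\in S_0^{(m)}$, a second application of Proposition \ref{PR eq coef} (this time at order $m$, with $y'=y^{[n]}$) gives $y^{[n]}\in S_0^{(m)}$ together with $(y^{[n]})^{[m]} = y^{[m]}$. Combining with the previous step yields $y^{[n]}\in S_0^{(n)}\cap S_0^{(m)}$, which completes the proof.

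There is no real obstacle here; the only care needed is to keep straight the two distinct invocations of Proposition \ref{PR eq coef} (at orders $n$ and $m$) and to notice that the hypothesis $m\leq n$ is exactly what lets the coefficient agreement inherited from the order-$n$ convergent be reused at order $m$. Everything else is bookkeeping.
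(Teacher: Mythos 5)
Your proof is correct and follows essentially the same route as the paper: use Theorem \ref{TH char prop} to see that $y$ and $y^{[n]}$ share their first $n$ (hence first $m$) coefficients, then invoke Proposition \ref{PR eq coef} at the relevant orders. The only difference is that you spell out the order-$n$ application for the ``in particular'' clause explicitly, which the paper leaves implicit.
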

\begin{proof}
By Theorem \ref{TH char prop}, $y$ and $y^{[n]}$ have equal coefficients up to and including order $n-1$. Then certainly $y$ and $y^{[n]}$ have equal coefficients up to and including order $m-1$. Hence Proposition \ref{PR eq coef} applies, which implies that $y^{[n]}\in S_0^{(m)}$, and $(y^{[n]})^{[m]} = y^{[m]}$.
\end{proof}

\newpage
\section{Convergence of Expansion Systems}\label{SC convergence}

When we are given an ES $\Psi$ with a topological space $S_0$, it would be nice if $y^{[n]}$ becomes a better approximation of $y$ as $n$ increases, so that we can define $y^{[\infty]}:=\displaystyle\lim_{n\rightarrow \infty} y^{[n]} = y$. However, this may not always be the case, and the following definition provides names for the situations that may arise:

\begin{definition}
Suppose $\Psi$ is proper with respect to $y$ in the topological space $S_0$. We then call $\Psi$ {\em convergent}\index{Expansion system!convergent (property)} with respect to $y$, if $y^{[n]}$ converges to some element in $S_0$, for which we will use the notation $y^{[\infty]}$\index{zsymbols@\textbf{Symbols}!yinf@$y^{[\infty]}$}.
We say $\Psi$ is {\em accurate}\index{Expansion system!accurate} w.r.t. $y$, if it is not only convergent with some limit $y^{[\infty]}$, but moreover $y^{[\infty]}$ equals $y$. We say $\Psi$ is {\em divergent}\index{Expansion system!divergent} with respect to $y$, if the sequence of convergents $y^{[n]}$ does not converge at all. Furthermore, we simply say $\Psi$ is convergent or accurate, if it has this respective property with respect to all of its elements.
\end{definition}

\begin{remark}\label{RM conv wrt fin order}
Note that by Corollary \ref{CR conv wrt fin order}, an ES is always accurate w.r.t. its elements of finite order, even if $S_0$ is given the discrete topology.
\end{remark}

In this section we will investigate some of the conditions under which $\Psi$ is (accurately) convergent. As convergence depends on the topology which the element spaces are endowed with, it is important to pay attention to the topological aspects of ES's. As it will turn out, in many cases the approximation will converge in one topology, but not in the other. We will therefore first develop a framework which will help us in investigating these phenomena.

\subsection{Topologies on Expansion Systems}\label{SS top on es}

\begin{definition}
Let $f:X\rightarrow Y$ be a map between $X$ and $Y$, where $Y$ is given the topology $\topY$. Then we define the {\em generated topology}\index{Topology!generated} $f^{\inv}(\topY)$, to be the topology on $X$ generated by the map $f$, meaning it is generated by the sets $f^{\,\inv}(U)$, where $U$ is open in $\topY$ (cf. \cite[1.4.8]{Enk}).

More specific, let $\topC$\index{zsymbols@\textbf{Symbols}!Ctop@$\topC$} be a topology on the space $\prodc$ of coefficient sequences. We then define $\acP^{\,\inv}(\topC)$\index{zsymbols@\textbf{Symbols}!PinvC@$\acP^{\,\inv}(\topC)$} to be the topology on $S_0$, generated by the sets $\acP^{\,\inv}(U)$ with $U$ open in $\topC$, and call this the {\em generated topology}. Usually each space $C_i$ already has its own topology $\topC_i$, and unless otherwise specified, we will assume that $\topC$ is the product topology with respect to these topologies. In that case, we call the generated topology $\acP^{\,\inv}(\topC)$ the {\em weak topology}\index{Topology!weak} generated by the collection of projections $\acP_i$ to the spaces $(C_i,\topC_i)$ (cf. \cite[3.16]{ArPo}). In the context of expansion systems, the weak topology on $S_0$ which is generated by the discrete topologies on $C_i$ turns out to be an important topology. We will call this topology on $S_0$ the {\em weakly discrete topology}\index{Topology!weakly discrete}, and denote it by $\acP^{\,\inv}(\topDP)$\index{zsymbols@\textbf{Symbols}!Dtop@$\topD,\topDP$}, where $\topDP$ on its turn denotes the discrete product topology on $\prodc$. The discrete topology on $\prodc$ will be written as just $\topD$.
\end{definition}

\begin{remark}\label{RM topology}
The generated topology $\acP^{\,\inv}(\topC)$ can also be defined as the weakest topology such that $\acP\colon S_0\rightarrow \prodc$ is continuous. In particular, the weak topology is the weakest topology such that all $\acP_i\colon S_0\rightarrow C_i$ are continuous (this follows from the fact that $\acP$ is continuous iff each $P_i$ is continuous, see \cite[19.6]{Mk}). This topology is therefore generated by sets of the form $P_i^{\inv}(U_i)$, where $i\in \N$, and $U_i$ is open in $C_i$. This implies that a sequence $y^{[n]}$ in $S_0$ converges to $y$ in the weak topology, iff for each $i\in \N$: $\acP_i\,y^{[n]}$ converges to $\acP_i\,y$ in the topology $\topC_i$ of $C_i$. Equivalently, $y^{[n]}$ converges to $y$ in the weak topology, iff $\acP\,y^{[n]}$ converges to $\acP\,y$ in the product topology $\topC$. The weakly discrete topology $\acP^{\,\inv}(\topDP)$ is likewise generated by the sets $\acP_i^{\,\inv}(c_i)$, where $i\in \N$ and $c_i$ runs through all the points in $C_i$. In particular, $y^{[n]}$ converges to $y$ in $\acP^{\,\inv}(\topDP)$, iff for each $i\in \N$: $\acP_i\,y^{[n]}$ equals $\acP_i\,y$ for $n$ large enough. The weakly discrete topology is the strongest one among all possible weak topologies generated by the $\acP_i$, so given an arbitrary weak topology, the weakly discrete topology is necessarily stronger or equal to it. Although it is useful to use different names for the generated topology and the weak topology, we should keep in mind that in a strict sense there is no essential distinction between these two types of topology, because a weak topology can be viewed as the topology generated by the product map to the product topology, and a generated topology can be viewed as a weak topology for just one map.
\end{remark}

\begin{lemma}\label{LM basis top}
Consider an ES $\Psi$ where $\basisB_i\ (i\in \N)$ are bases for the topologies $\topC_i$ of $C_i$. Then the product topology $\topC$ on $\prodc$ has a basis of the form:
\begin{equation}\label{EQ basis top C}
\pi_n^{\inv}(B_0,\ldots,B_{n-1})=B_0\times\ldots\times B_{n-1}\times C_n\times C_{n+1}\times\ldots,
\index{zsymbols@\textbf{Symbols}!piinvB@$\pi_n^{\inv}(B_0,\ldots,B_{n-1})$}
\end{equation}
where $B_i\in \basisB_i$ and $\pi_n\colon \prodc\rightarrow \prod_{i< n}C_i$ is the projection on the first $n$ coordinates. The weak topology $\acP^{\inv}(\topC)$ on $S_0$ has as a basis:
\begin{equation}\label{EQ basis top S}
\acP^{\inv}(B_0,\ldots,B_{n-1}):=\bigcap_{i=0}^{n-1} \acP_i^{\inv}(B_i),\index{zsymbols@\textbf{Symbols}!PinvB@$\acP^{\,\inv}(B_0,\ldots,B_{n-1})$}
\end{equation}
where $B_i\in \basisB_i$. In particular, we have that $\topDP$ has a basis of the form:
\begin{equation}\label{EQ basis top DP}
\pi_n^{\inv}(c_0,\ldots,c_{n-1}) = \{c_0\}\times\ldots\times \{c_{n-1}\}\times C_n\times C_{n+1}\times\ldots,
\end{equation}
and $\acP^{\,\inv}(\topDP)$ has a basis:
\begin{equation}\label{EQ basis top WD}
\acP^{\inv}(c_0,\ldots,c_{n-1})=\bigcap_{i=0}^{n-1} \acP_i^{\inv}(c_i),
\end{equation}
where each $c_i$ is a point in $C_i$.
\end{lemma}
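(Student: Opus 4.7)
The plan is to prove the four parts in order, with the first two doing all the real work and the last two falling out as the special case where each $\basisB_i$ is the collection of singletons in $C_i$.

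First I would handle the basis for the product topology $\topC$ on $\prodc$. By definition, the product topology is generated by the subbasis $\{\pi_i^{\inv}(U_i) : i\in\N,\ U_i\in\topC_i\}$, so its basis consists of finite intersections of such sets. A finite intersection involves only finitely many indices, say all less than some $n$, and by padding with $C_j$ for the missing indices below $n$ it can be rewritten as $\pi_n^{\inv}(U_0\times\cdots\times U_{n-1})$ with each $U_i\in\topC_i$. To replace these arbitrary open $U_i$ by basis elements, I would argue that any point in such a set lies in some $U_i$, hence in some $B_i\in\basisB_i$ with $B_i\subset U_i$; the resulting $\pi_n^{\inv}(B_0,\ldots,B_{n-1})$ is a subset that still contains the point, which is the standard criterion for a collection to be a basis (cf.\ \cite[1.1.5]{Enk} or any equivalent reference).

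Next, for the weak topology $\acP^{\inv}(\topC)$ on $S_0$, I would use the general fact that if $f\colon X\to Y$ is any map and $\topT$ is a topology on $Y$ generated by a basis $\basisB$, then $f^{\inv}(\topT)$ is generated by $\{f^{\inv}(B) : B\in\basisB\}$. Applying this to $\acP\colon S_0\to\prodc$ with the basis just constructed, the basic open sets become $\acP^{\inv}\bigl(\pi_n^{\inv}(B_0,\ldots,B_{n-1})\bigr)$. Since $\pi_i\circ\acP = \acP_i$, this preimage distributes as
\begin{equation*}
\acP^{\inv}\bigl(\pi_n^{\inv}(B_0,\ldots,B_{n-1})\bigr)=\bigcap_{i=0}^{n-1}(\pi_i\circ\acP)^{\inv}(B_i)=\bigcap_{i=0}^{n-1}\acP_i^{\inv}(B_i),
\end{equation*}
which is exactly \eqref{EQ basis top S}.

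Finally, for the discrete product topology $\topDP$ and the weakly discrete topology $\acP^{\inv}(\topDP)$, I would simply note that the discrete topology $\topC_i$ on $C_i$ admits the singletons $\basisB_i=\{\{c\}:c\in C_i\}$ as a basis, so that \eqref{EQ basis top DP} and \eqref{EQ basis top WD} follow by substituting $B_i=\{c_i\}$ into \eqref{EQ basis top C} and \eqref{EQ basis top S} respectively. The only mild obstacle I anticipate is keeping the bookkeeping between the different maps $\pi_n$, $\pi_i$, $\acP$, and $\acP_i$ transparent; once the identity $\pi_i\circ\acP=\acP_i$ is invoked, everything reduces to routine facts about initial (weak) topologies.
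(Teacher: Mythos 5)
Your proposal is correct and follows essentially the same route as the paper's proof: both reduce the first claim to the standard basis of the product topology via the refinement criterion, obtain the basis for the weak topology by pulling back along $\acP$ (your explicit use of $\pi_i\circ\acP=\acP_i$ is a nice touch the paper leaves implicit), and treat the discrete cases as the singleton specialization.
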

\begin{proof}
We recall that the product topology $\topC$ is generated by all sets of the form $\prod_{i\in\N} V_i$, where for only finitely many $i\in\N$ we have $C_i\neq V_i\in \basisB_i$ (see for instance\cite[Thm. 19.2]{Mk}\cite[2.3.1]{Enk}).
Given such a set $\prod_{i\in\N} V_i$, we may assume that $V_i = C_i$ for $i\geq n$. Now let $\seqc = (c_0,c_1,\ldots)\in \prod_{i\in\N} V_i$. For each $i<n$ we pick a basis element $B_i$ of $\basisB_i$, such that $c_i\in B_i\subset V_i$ (when $V_i=C_i$, any $B_i$ containing $c_i$ will suffice, and when $V_i\neq C_i$, we can just set $B_i=V_i$). Now $\pi_n^{\inv}(B_0,\ldots,B_{n-1})$ is an open set containing $\seqc$, and which lies within the set $\prod_{i\in\N} V_i$. By the arbitrarity of $\prod_{i\in\N} V_i$ and $\seqc$, this proves that \eqref{EQ basis top C} defines indeed a basis for $\topC$. Equation \eqref{EQ basis top S} follows from the observation that if $\basisB$ is a basis of $\topC$, then $\{\acP^{\,\inv}(B):B\in\basisB\}$ serves as a basis for $\acP^{\,\inv}(\topC)$. Equation \eqref{EQ basis top DP} and \eqref{EQ basis top WD} are now special cases of respectively \eqref{EQ basis top C} and \eqref{EQ basis top S}, where we note that the collection of all singletons forms a basis for the discrete topology.
\end{proof}

\subsection{Some Elementary Convergence Theorems}

\begin{theorem}\label{TH conv weakly discrete top}
Given $y\in S_0^{(<\infty)}$, we have that $\acP\,y^{[m]}$ converges to $\acP\,y$ in the discrete product topology $\topDP$. Further more, $y^{[m]}$ converges to the initial element $y\in S_0$ in the weakly discrete topology $\acP^{\,\inv}(\topDP)$.
\end{theorem}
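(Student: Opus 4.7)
The single observation driving both claims is Theorem~\ref{TH char prop}: whenever $y\in S_0^{(m)}$, the convergent $y^{[m]}$ satisfies $\acP_i\,y^{[m]}=\acP_i\,y=c_i$ for all $0\le i\le m-1$. Since the hypothesis $y\in S_0^{(<\infty)}$ says $y\in S_0^{(m)}$ for every $m\in\N$, this identity holds for every $m$ and every $i<m$. In other words, for each fixed coordinate $i$, the $i$-th coefficient of $y^{[m]}$ stabilizes at $c_i$ as soon as $m>i$.

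For the first claim, I would use the basis \eqref{EQ basis top DP} from Lemma~\ref{LM basis top}: a typical basic neighborhood of $\acP\,y=(c_0,c_1,\ldots)$ in $\topDP$ has the form $\pi_n^{\inv}(c_0,\ldots,c_{n-1})$. Taking $m\ge n$, the observation above yields $\acP_i\,y^{[m]}=c_i$ for all $i\le n-1$, whence $\acP\,y^{[m]}\in\pi_n^{\inv}(c_0,\ldots,c_{n-1})$. Since $n$ was arbitrary, $\acP\,y^{[m]}\to\acP\,y$ in $\topDP$.

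For the second claim, the cleanest route is to note that, by definition of the generated topology, every open set of $\acP^{\,\inv}(\topDP)$ is of the form $\acP^{\,\inv}(V)$ for some $V$ open in $\topDP$; so the convergence $\acP\,y^{[m]}\to\acP\,y$ in $\topDP$ transports directly to $y^{[m]}\to y$ in $\acP^{\,\inv}(\topDP)$. Alternatively, one can repeat the previous argument verbatim using the basis \eqref{EQ basis top WD}: for $m\ge n$, the same calculation places $y^{[m]}$ inside $\acP^{\inv}(c_0,\ldots,c_{n-1})=\bigcap_{i=0}^{n-1}\acP_i^{\inv}(c_i)$. No serious obstacle is expected; the proposition is really just Theorem~\ref{TH char prop} read through the explicit bases of Lemma~\ref{LM basis top}, and the only point to watch is keeping the two topologies straight.
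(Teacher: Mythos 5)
Your argument is correct and follows essentially the same route as the paper: the paper likewise invokes the stabilization of coefficients (via Remark~\ref{RM impl theor}, i.e.\ Theorem~\ref{TH char prop}) together with the explicit basis $\pi_n^{\inv}(c_0,\ldots,c_{n-1})$ from Lemma~\ref{LM basis top}, and then transports the convergence to $S_0$ through the generated topology exactly as in your second paragraph (the paper cites Remark~\ref{RM topology} for this last step). Nothing further is needed.
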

\begin{proof}
Let $V$ be a neighborhood of $\acP\,y$ in the topology $\topDP$. By Lemma \ref{LM basis top}, without loss of generality we may assume that $V$ is an element of the form $\pi_n^{\inv}(c_0,\ldots,c_{n-1})$. By Remark \ref{RM impl theor}, we have that $\acP_i\,y^{[m]}=\acP_i\,y=c_i$ for all $i<m$, and so in particular for all $i<n$, when $m\geq n$. Hence $\acP\,y^{[m]}\in V$ for $m\geq n$, which proves by arbitrarity of $V$ that $\acP\,y^{[m]}$ converges to $\acP\,y$. The second claim follows by Remark \ref{RM topology}.
\end{proof}

Convergence in the weakly discrete topology turns out to be useful for our understanding of expansion systems, but its direct implications for the convergence of $y^{[n]}$ in other topologies are nonetheless limited. The following example will demonstrate this.

\begin{example}[\bf Fourier Series]\label{EX fourier approximation}

Let $S_i$ be the space of complex integrable functions on $[0,2\pi]$, such that $f(0)=f(2\pi)$. For $C_i$ we take the set $\C^2$. Now we define
$$P_i\,y :=(\int_0^{2\pi}e^{i\iota t}y(t)dt,\int_0^{2\pi}e^{-i\iota t}y(t)dt)=(c_{-i},c_{+i}),$$
where $\iota:=\sqrt{-1}$. Furthermore, we set
$$E_i\,y(x):=y(x)-c_{-i}e^{-i\iota x}-c_{+i}e^{i\iota x}.$$
One verifies that $\acP_i\,y=(c_{-i},c_{+i})$ corresponds with the $-i$-th and {$+i$-th} Fourier coefficient of $y$. As the $n-1$-th order Fourier approximation $y'_n:=c_{-(n-1)}e^{-(n-1)\iota x}+\ldots+c_{n-1}e^{(n-1)\iota x}$ is clearly an element of $S_0$, and we have that $\acE_n(y'_n)=0=\neut_n$, it follows by Theorem \ref{TH char prop} that $y^{[n]}$ equals the $n-1$-th order Fourier approximation.
\end{example}

Here, Theorem \ref{TH conv weakly discrete top} does not guarantee that $y^{[n]}$ will converge in other topologies than the weakly discrete topology. Indeed, in the example of Fourier approximation, a function may have well-defined Fourier coefficients, so that $y^{[n]}$ converges in the weakly discrete topology, while its Fourier series diverges in the topology of pointwise convergence (cf. \cite[Ch.~3, \S2.2]{StSh}). In fact, the weakly discrete topology is in this ES not even weaker than pointwise convergence, but just incomparable. For if we take the sequence $$f_n\colon [0,2\pi]\rightarrow \C,\quad f_n(x) = 1/n,$$
it clearly converges pointwise, but its zeroth Fourier coefficient does not converge in the discrete topology. However, given a topology we have in mind for $S_0$, it is often possible to turn the ES into a convergent one, simply by restricting the space $S_0$ to a smaller space. For example, in the case of Fourier approximation, in order to get pointwise convergence it suffices to restrict $S_0$ to differentiable functions (see \cite[Ch.~3, \S2.1]{StSh}).

Another important thing to notice, is the following. Theorem \ref{TH conv weakly discrete top} tells us that $y^{[n]}$ converges to $y$ in the weakly discrete topology. But, when we alter the values of the function $y$ on a set with Lebesgue measure $0$, then this will not have any effect on the Fourier coefficients $c_i$, and therefore also the convergents $y^{[n]}$ will not get altered. But according to Theorem \ref{TH conv weakly discrete top}, $y^{[n]}$ should also be accurately convergent towards the altered $y$ in the weakly discrete topology. This means that $\acP^{\,\inv}(\topDP)$ is not Hausdorff, and $y^{[n]}$ converges to multiple different elements at the same time. It turns out that the property described in the following definition, is characteristic for the weakly discrete topology to be Hausdorff.

\begin{definition}
An ES $\Psi$ is called {\em separating}\index{Expansion system!separating}, if the map $\acP\colon S_0\rightarrow \prodc$ is injective. In other words, $\Psi$ is called separating, if the family of projections $\acP_i$ separates points in $S_0$, i.e. for $y,y'\in S_0$ and $y\neq y'$, we have $\acP_i\,y\neq\acP_i\,y'$ for some $i\in \N$.
\end{definition}

So if $\Psi$ is a separating ES, the elements in $S_0$ can be uniquely codified as a list of coefficients $(c_0,c_1,\ldots)\in C$.

\begin{proposition}\label{PR hausdorff iff separating}
Let $\topC$ be a Hausdorff topology on $\prodc$, and let the element space $S_0$ be endowed with the generated topology $\acP^{\,\inv}(\topC)$. Then $S_0$ is Hausdorff iff $\Psi$ is separating.
\end{proposition}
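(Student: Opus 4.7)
The statement is essentially the standard fact that a topology generated by a family of maps into a Hausdorff space is Hausdorff precisely when the family separates points. My plan is to prove each implication directly from the definition of the generated topology $\acP^{\,\inv}(\topC)$, whose open sets are (arbitrary unions of) preimages $\acP^{\,\inv}(V)$ with $V$ open in $\prodc$.

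For the backward implication, assume $\Psi$ is separating and pick distinct $y,y'\in S_0$. Then $\acP\,y\neq \acP\,y'$ in $\prodc$, so by the Hausdorff property of $\topC$ we can find disjoint open sets $V,V'\subset\prodc$ with $\acP\,y\in V$ and $\acP\,y'\in V'$. The preimages $\acP^{\,\inv}(V)$ and $\acP^{\,\inv}(V')$ are then open in $\acP^{\,\inv}(\topC)$, contain $y$ and $y'$ respectively, and are disjoint because $V\cap V'=\emptyset$. Hence $S_0$ is Hausdorff.

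For the forward implication I would argue by contrapositive. Suppose $\Psi$ is not separating, so there exist $y\neq y'$ in $S_0$ with $\acP\,y=\acP\,y'$. Every basic open set of $\acP^{\,\inv}(\topC)$ has the form $\acP^{\,\inv}(V)$ with $V$ open in $\topC$, and clearly $y\in\acP^{\,\inv}(V)$ iff $\acP\,y\in V$ iff $\acP\,y'\in V$ iff $y'\in\acP^{\,\inv}(V)$. Since arbitrary open sets in $\acP^{\,\inv}(\topC)$ are unions of such basic opens, every open set containing $y$ also contains $y'$, so $y$ and $y'$ are topologically indistinguishable. In particular $S_0$ fails even the $T_0$ axiom, hence is not Hausdorff.

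There is no real obstacle here: the argument is purely formal manipulation of the definition of the generated topology, and does not use anything specific about expansion systems beyond the fact (established already in Remark~\ref{RM topology}) that $\acP^{\,\inv}(\topC)$ is generated by the preimages of opens in $\topC$. If I wanted to be a little slicker in the backward direction, I could simply invoke the general principle that pulling back a Hausdorff topology along an injective map always yields a Hausdorff topology, but writing out the two-line verification seems preferable since it makes the role of injectivity transparent.
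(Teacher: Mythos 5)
Your proof is correct and follows essentially the same route as the paper: the separating direction is the standard ``injective continuous map into a Hausdorff space'' argument (which you write out explicitly and the paper invokes directly), and the converse is established by observing that any two points with the same image under $\acP$ cannot be separated by opens of the generated topology, exactly as in the paper. No gaps.
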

\begin{proof}
If $\Psi$ is separating, then the map $\acP\colon S_0 \rightarrow \prodc$ is a injective continuous map into a Hausdorff space, which implies that $S_0$ is Hausdorff. On the other hand, if $\Psi$ is not separating, then we have $\seqc=\acP\,y=\acP\,y'\,\in C$ for certain $y,y'\in S_0, y\neq y'$. Now if $V_y$ and $V_{y'}$ are two neighborhoods of $y$ and $y'$ in $S_0$, we may w.l.o.g. assume that $V_y = \acP^{\,\inv}(U_y)$ and $V_{y'} = \acP^{\,\inv}(U_{y'})$ for some open neighborhoods $U_y$ and $U_{y'}$ in $\prodc$. Then $\seqc\in U_y\cap U_{y'}$, and so $\acP^{\,\inv}(c)\in V_y\cap V_{y'}$, which shows that any two neighborhoods of $y$ and $y'$ cannot be disjoint.
\end{proof}

\begin{theorem}\label{TH accurate if cont}
Suppose $\Psi$ is a separating ES, such that $S_0$ is a topological space and all $C_i$ are topological Hausdorff spaces. Moreover, suppose $\Psi$ is convergent w.r.t $y\in S_0$. If $\acP$ is continuous at $y^{[\infty]}$ in the weak topology, then $y = y^{[\infty]}$, which means $\Psi$ is accurate w.r.t. $y$. In particular, if $\Psi$ is a convergent ES and $\acP$ is continuous on $S_0$, then $\Psi$ is accurate.
\end{theorem}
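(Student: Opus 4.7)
The plan is to chase the two convergences of the coefficient sequence $\acP\,y^{[n]}$ into the product topology on $\prodc$: one obtained from the previously established weakly-discrete convergence, the other from the hypothesis of continuity of $\acP$. Hausdorffness of the product topology then forces the two limits to agree, after which the separating hypothesis converts the equality of coefficient sequences into the desired equality $y=y^{[\infty]}$.

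More precisely, first I invoke Theorem~\ref{TH conv weakly discrete top}, which gives $\acP\,y^{[m]}\to \acP\,y$ in $\topDP$. Since every $C_i$ is Hausdorff, the product topology $\topC$ on $\prodc$ is coarser than $\topDP$ (the discrete topology on each factor is finer than $\topC_i$), so this same convergence automatically holds in $\topC$:
\begin{equation*}
\acP\,y^{[m]}\longrightarrow \acP\,y \qquad\text{in }\topC.
\end{equation*}
On the other hand, by the assumption of convergence we have $y^{[m]}\to y^{[\infty]}$ in $S_0$, and by the continuity of $\acP$ at $y^{[\infty]}$ (viewed as a map $S_0\to(\prodc,\topC)$) this pushes forward to
\begin{equation*}
\acP\,y^{[m]}\longrightarrow \acP\,y^{[\infty]} \qquad\text{in }\topC.
\end{equation*}

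The product $\topC=\prod_{i}\topC_i$ is Hausdorff because each factor is, so limits in $\topC$ are unique. Hence $\acP\,y=\acP\,y^{[\infty]}$. Because $\Psi$ is separating, $\acP$ is injective, and therefore $y=y^{[\infty]}$, which is exactly the assertion of accuracy with respect to $y$. For the final ``in particular'' clause, continuity of $\acP$ on the whole of $S_0$ specialises to continuity at every $y^{[\infty]}$, so the main assertion applies to every $y\in S_0$ with respect to which $\Psi$ is convergent, and $\Psi$ is thus accurate.

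There is no single hard step; the only subtlety is keeping three topologies straight, namely the given topology on $S_0$ (carrying the convergence $y^{[m]}\to y^{[\infty]}$), the product topology $\topC$ on $\prodc$ (the target for continuity of $\acP$ and the site of the Hausdorff uniqueness argument), and the discrete product topology $\topDP$ (supplied by Theorem~\ref{TH conv weakly discrete top}). The proof hinges on noticing that $\topDP$ refines $\topC$ so that the weakly-discrete convergence of Theorem~\ref{TH conv weakly discrete top} transfers for free to the weaker product topology where Hausdorffness can then be exploited.
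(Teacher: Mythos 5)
Your proof is correct and rests on the same two ingredients as the paper's: the weakly-discrete convergence $\acP\,y^{[m]}\to\acP\,y$ from Theorem~\ref{TH conv weakly discrete top} and the convergence $\acP\,y^{[m]}\to\acP\,y^{[\infty]}$ coming from continuity of $\acP$ at the limit. The one structural difference is where you exploit Hausdorffness: the paper pulls both convergences back into $S_0$ equipped with the weak topology $\acP^{\,\inv}(\topC)$ and invokes Proposition~\ref{PR hausdorff iff separating} (separating $\Rightarrow$ $S_0$ Hausdorff in that topology) to identify the two limits directly in $S_0$; you instead push everything forward into $(\prodc,\topC)$, use that a product of Hausdorff spaces is Hausdorff to get $\acP\,y=\acP\,y^{[\infty]}$, and only then apply injectivity of $\acP$. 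Your route is slightly more economical in that it bypasses Proposition~\ref{PR hausdorff iff separating} entirely and uses the separating hypothesis in its raw form as injectivity; the paper's route makes the role of the weak topology on $S_0$ itself more visible. One cosmetic slip: the inclusion $\topC\subset\topDP$ has nothing to do with the $C_i$ being Hausdorff --- it holds simply because the discrete topology refines every topology, as your own parenthetical correctly states --- so the phrase ``Since every $C_i$ is Hausdorff'' at that point is a non sequitur; the Hausdorff hypothesis is only needed for the uniqueness-of-limits step.
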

\begin{proof}
If $\acP$ is continuous at $y^{[\infty]}$, we have that $\acP\,y^{[n]}$ converges to $\acP\,y^{[\infty]}$, and thus by Remark \ref{RM topology}, $y^{[n]}$ converges to $y^{[\infty]}$ in the weak topology. On the other hand $y^{[n]}$ converges to $y$ in the weakly discrete topology by Theorem \ref{TH conv weakly discrete top}, and thus also in the weak topology. As $S_0$ is Hausdorff in the weak topology by Proposition \ref{PR hausdorff iff separating}, we must have that $y = y^{[\infty]}$.
\end{proof}

\begin{remark}
Note that in the above theorem, for $\acP$ to be continuous at an element $y^{[\infty]}$, it is sufficient if all the expansions $E_i\colon S_i\rightarrow S_{i+1}$ and all the projections $P_i\colon S_i\rightarrow C_i$ are continuous at $y_i^{[\infty]}$, as each $\acP_i$ is composed from these maps. Also note that the theorem does not state that $y^{[n]}$ converges to $y$. It only says that if the sequence converges, and $\acP$ is continuous at its limit, then this limit can only be $y$.
\end{remark}

\begin{example}[\bf Convergence of Fourier Series]
We demonstrate how Theorem \ref{TH accurate if cont} can be applied to obtain accurateness of particular Fourier approximations. Consider the ES $\Psi$ from Example \ref{EX fourier approximation}, except that we let the $S_i$ consist of only continuous functions, with the topology of uniform convergence. The projections $\acP_i$ are then continuous on $S_0$, as we have the estimation:
\begin{eqnarray*}
\acP_i\,y - \acP_i\,y' = \acP_i(y-y') = \int_0^{2\pi} (y(t)-y'(t))e^{i\iota t}dt\\ \leq \int_0^{2\pi} |y(t)-y'(t)| dt \leq 2\pi\|y-y'\|.
\end{eqnarray*}
Using the property that Fourier coefficients are unique for continuous functions (cf. \cite[Cor.~2.2]{StSh}), we know that $\Psi$ is also separating. Now suppose $y\in S_0$ has absolutely convergent Fourier coefficients $c_i$, i.e.
$$
\sum_{i=-\infty}^\infty |c_i| < \infty.
$$
Then one easily verifies that the series:
$$
y^{[n]}(x) = \sum_{k=-n+1}^{n-1} c_{k}e^{k\iota x}
$$
converges uniformly on $[0,2 \pi]$. Hence by Theorem \ref{TH accurate if cont}, the approximations $y^{[n]}$ will converge to the initial element $y$.
\end{example}

\begin{lemma}\label{LM top indist}
Let $f\colon X\rightarrow Y$ be a mapping, and let $\topX$ be the topology on $X$. Then the following statements are equivalent:
\begin{enumerate}[label=\Roman{*}]
\item $\topX$ is weaker or equal to the generated topology $f^{\inv}(\topD)$, where $\topD$ is the discrete topology on $Y$.
\item Every open set $U$ of $\topX$ can be written as $f^{\inv}(V)$, where $V$ is a subset of $Y$.
\item $f(p) = f(p')$ implies that $p$ and $p'$ are topologically indistinguishable in $(X,\topX)$, i.e. every open set which contains one of these two elements, also contains the other.
\end{enumerate}
\end{lemma}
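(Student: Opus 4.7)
The plan is to show the equivalences by the cycle $\text{I} \Leftrightarrow \text{II} \Rightarrow \text{III} \Rightarrow \text{II}$, where the equivalence I $\Leftrightarrow$ II is essentially a restatement of the definition of the generated topology, and the two implications involving III are the only substantive content.

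First I would unpack $f^{\inv}(\topD)$. Since $\topD$ is discrete, every subset $V \subseteq Y$ is open, and the collection $\{f^{\inv}(V) : V \subseteq Y\}$ is already closed under arbitrary unions and finite intersections (preimage commutes with both, and the family of all subsets of $Y$ is closed under these operations). Hence $f^{\inv}(\topD)$ is precisely this collection, not just generated by it. Statement I then reads: every $U \in \topX$ belongs to $\{f^{\inv}(V) : V \subseteq Y\}$, which is exactly statement II.

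For II $\Rightarrow$ III, suppose $f(p) = f(p')$ and let $U \in \topX$ contain $p$. By II, $U = f^{\inv}(V)$ for some $V \subseteq Y$. Then $f(p) \in V$, so $f(p') \in V$, hence $p' \in f^{\inv}(V) = U$; by symmetry any open set containing $p'$ also contains $p$, so $p$ and $p'$ are topologically indistinguishable.

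The step I expect to be the main (though still mild) obstacle is III $\Rightarrow$ II, because it requires producing the set $V$ explicitly. My plan is to take $V := f(U)$ for an arbitrary open $U \in \topX$. The inclusion $U \subseteq f^{\inv}(f(U)) = f^{\inv}(V)$ is automatic. For the reverse inclusion, pick any $p' \in f^{\inv}(V)$; then $f(p') \in f(U)$, so there exists $p \in U$ with $f(p) = f(p')$. Applying III, $p$ and $p'$ are topologically indistinguishable, and since $U$ is open and contains $p$ it must also contain $p'$. This gives $f^{\inv}(V) \subseteq U$, completing the equality and closing the cycle.
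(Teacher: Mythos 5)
Your proposal is correct and follows essentially the same route as the paper: I $\Leftrightarrow$ II by unpacking the generated topology, II $\Rightarrow$ III via $U = f^{\inv}(V)$, and III $\Rightarrow$ II by taking $V := f(U)$ and using indistinguishability to get $f^{\inv}(V) \subseteq U$. Your extra remark that $\{f^{\inv}(V) : V \subseteq Y\}$ is already a topology (so ``generated by'' is superfluous here) is a nice clarification the paper leaves implicit.
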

\begin{proof}
The equivalency between I and II is straightforward. So let's consider II implies III. Then, if $f(p)=f(p')$, and we have a open neighborhood $U$ containing $p$, it follows that $U=f^{\inv}(V)$ for some $V\subset Y$. But then $f(p)\in V$, so that $p'\in f^{\inv}(f(p))\subset f^{\inv}(V) = U$. By symmetry it follows that $p$ and $p'$ are indistinguishable.

Now we prove III implies II. Now let $U$ be an open set in $X$, and consider $V:=f(U)$. If $p\in f^{\inv}(V)$, than $f(p)=f(p')$ for some $p'\in U$, and so $p$ and $p'$ are topologically indistinguishable. Hence, we must have that $p\in U$ as well, which proves that $U\subset f^{\inv}(V)\subset U$. So $U$ can indeed be written as $f^{\inv}(V)$, with $V\subset Y$.
\end{proof}

This lemma, together with the following definition, allows us to formulate another convergence theorem.

\begin{definition}
Let $f\colon X\rightarrow Y$ be a map between topological spaces. We call such a map {\em closed at a point}\index{Map!closed at a point} $y\in Y$, if for every open neighborhood $U$ of the set $f^{\inv}(y)$, there is an open neighborhood $V$ of $y$, such that $f^{\inv}(V)\subset U$ (cf. \cite[4.5.13]{Enk}).
\end{definition}

\begin{remark}\label{RM closed iff closed at any point}
One can prove that a function $f\colon X\rightarrow Y$ is closed, iff $f$ is closed at any point in $Y$ according to the definition above (see for instance \cite[1.4.13]{Enk}).
\end{remark}

\begin{theorem}
Let $\Psi$ be a proper ES, and let $\prodc$ be endowed with the discrete product topology $\topDP$. Assume $S_0$ has a topology which is weaker or equal to $\acP^{\,\inv}(\topD)$. Then $\Psi$ is acurate w.r.t. $y$, if $\acP$ is closed at the point $\seqc = \acP(y)\in C$.
\end{theorem}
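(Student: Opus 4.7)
The plan is to chain together three ingredients: (1) the known convergence $\acP\,y^{[m]}\to \seqc$ in $\topDP$ from Theorem \ref{TH conv weakly discrete top}, (2) the characterization of the topology on $S_0$ given by Lemma \ref{LM top indist}, and (3) the hypothesis that $\acP$ is closed at $\seqc$. Accuracy w.r.t.\ $y$ means $y^{[m]}\to y$ in the topology of $S_0$, so I need to verify, for an arbitrary open neighborhood $U$ of $y$ in $S_0$, that $y^{[m]}\in U$ for all sufficiently large $m$.

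First I would fix such a $U$ and use Lemma \ref{LM top indist} (equivalence I $\Rightarrow$ II applied to $f=\acP$ and $\topD$ the discrete topology on $\prodc$) to write $U=\acP^{\,\inv}(V)$ for some $V\subset\prodc$. Since $y\in U$, we have $\seqc=\acP\,y\in V$, hence $\acP^{\,\inv}(\seqc)\subset \acP^{\,\inv}(V)=U$. In particular $U$ is an open neighborhood, in $S_0$, of the fiber $\acP^{\,\inv}(\seqc)$. This is precisely the hypothesis required by the definition of closedness of $\acP$ at the point $\seqc$.

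Next I would apply the closedness hypothesis at $\seqc$ (with $\prodc$ carrying $\topDP$) to produce an open neighborhood $W$ of $\seqc$ in $\topDP$ with $\acP^{\,\inv}(W)\subset U$. By Lemma \ref{LM basis top}, $W$ contains some basic open set of the form $\pi_n^{\inv}(\seqc_0,\ldots,\seqc_{n-1})$ around $\seqc$. Theorem \ref{TH conv weakly discrete top} (which applies because $\Psi$ is proper, so $y\in S_0^{(<\infty)}$) gives $\acP\,y^{[m]}\to \seqc$ in $\topDP$, so $\acP\,y^{[m]}$ lies in this basic neighborhood, hence in $W$, for all sufficiently large $m$. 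Consequently $y^{[m]}\in \acP^{\,\inv}(W)\subset U$ eventually, which is exactly what is needed to conclude $y^{[m]}\to y$ in $S_0$.

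I do not anticipate a real obstacle — the argument is essentially a bookkeeping exercise that matches each piece of data to the right topology. The one place that requires care is keeping straight the difference between $\topD$ (the full discrete topology on $\prodc$, which governs the topology on $S_0$ via Lemma \ref{LM top indist}) and $\topDP$ (the discrete product topology, in which $\acP\,y^{[m]}$ actually converges and in which closedness of $\acP$ is assumed). The compatibility of these two topologies is exactly what the hypotheses are designed to bridge, and the role of closedness at $\seqc$ is precisely to convert an open set pulled back from $\topD$ into one pulled back from $\topDP$.
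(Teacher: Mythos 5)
Your argument is correct and follows essentially the same route as the paper's proof: use Lemma \ref{LM top indist} to write a neighborhood $U$ of $y$ as $\acP^{\,\inv}(V)$ and hence as a neighborhood of the fiber $\acP^{\,\inv}(\seqc)$, invoke closedness at $\seqc$ to get an open $W\subset\prodc$ in $\topDP$ with $\acP^{\,\inv}(W)\subset U$, shrink $W$ to a basic set $\pi_n^{\inv}(c_0,\ldots,c_{n-1})$ via Lemma \ref{LM basis top}, and finish with $\acP_i\,y^{[m]}=c_i$ for $i<m$. Your explicit attention to the distinction between $\topD$ and $\topDP$ is a slight refinement in presentation, but the substance is identical.
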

\begin{proof}
Let $U$ be a neighborhood of $y$. By Lemma \ref{LM top indist} we have that $U$ is also a neighborhood of $\acP^{\,\inv}(c)$. So by assumption, there is an open set $V\subset C$, such that $\acP^{\,\inv}(V)\subset U$, and by Lemma \ref{LM basis top} we may assume this $V$ is of the form $\pi_n^{\inv}(c_0,\ldots,c_{n-1})$, where $c_i=\acP_i\,y$. As $\acP_i\,y^{[m]}=c_i$ for $i<m$, we have that $y^{[m]}\in\acP^{\,\inv}(V)\subset U$ for all $m\geq n$, which proves convergence to $y$.
\end{proof}

\begin{corollary}
Let $\Psi$ be a proper ES, and let $\prodc$ be endowed with the discrete product topology $\topDP$. Then $\Psi$ is accurate, if $\acP$ is a closed map.
\end{corollary}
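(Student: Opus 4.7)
The plan is to reduce the corollary to the preceding theorem applied pointwise. The key ingredient is Remark \ref{RM closed iff closed at any point}, which identifies closedness of a map with closedness at every point of its codomain. So, as a preliminary step, I would invoke this remark to conclude that if $\acP \colon S_0 \rightarrow \prodc$ is a closed map, then in particular $\acP$ is closed at the point $\seqc = \acP(y)$ for every $y \in S_0$. The setup of the corollary (properness of $\Psi$, and $\prodc$ equipped with $\topDP$) already matches the hypotheses of the previous theorem, and the topological condition on $S_0$ that the theorem required (being weaker than or equal to $\acP^{\,\inv}(\topD)$) is inherited as a standing assumption from the preceding theorem.

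Given this preparation, the proof is essentially a one-liner. For each $y \in S_0$, the previous theorem directly yields that $\Psi$ is accurate with respect to $y$. Since $y$ was arbitrary, $\Psi$ is accurate with respect to every element of $S_0$, which is exactly the definition of $\Psi$ being accurate (without qualifier), as given in the definition at the start of this section. There is no real obstacle here; the only thing to keep in mind is the translation from the pointwise notion ``closed at a point'' used in the theorem to the global notion ``closed map'' used in the corollary, and that translation is precisely what Remark \ref{RM closed iff closed at any point} provides.
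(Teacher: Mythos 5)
Your proof is correct and follows exactly the paper's route: the paper's entire proof is ``This follows immediately from Remark \ref{RM closed iff closed at any point},'' which is precisely the pointwise reduction you spell out. Your additional observation that the hypothesis on the topology of $S_0$ (being weaker than or equal to $\acP^{\,\inv}(\topD)$) must be carried over from the preceding theorem is a fair and careful reading, since the corollary as stated omits it.
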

\begin{proof}
This follows immediately from Remark \ref{RM closed iff closed at any point}.
\end{proof}

\subsection{Convergence of Monotonic Expansion Systems}

In this subsection, we will define the so called {\em monotonic expansion systems}\index{Expansion system!monotonic}. This generalizes many instances of expansion systems, in which the element spaces are linearly ordered sets. The results we obtain for monotonic ES's can thus be applied to all the different instances of monotonic ES's.

Before we come to the definition of monotonic expansion systems, we will first briefly go through some definitions related to orders on a set. We call a function $f:X\rightarrow Y$ between linearly ordered sets $(X,\geq)$ and $(Y,\geq)$ {\em monotonic}\index{Map!monotonic}, if $f$ is either strictly increasing or strictly decreasing.\footnote{This meaning of the word monotonic might differ from those in other sources, where it could merely stand for strictly increasing or non-decreasing.}
Further, with the {\em dictionary order}\index{Order!dictionary order} on a (possibly infinite) Cartesian product $X=X_0\times X_1\times X_2\times\ldots$, we mean the order such that $(x_0,x_1,x_2,\ldots)<(x_0',x_1',x_2',\ldots)$, iff at the first entry $k$ on which the two sequences differ, we have that $x_k < x_k'$ (cf. \cite[Ch.1,\S3]{Mk}). The {\em order topology}\index{Topology!order topology} on an linearly ordered space $X$, can be defined as the topology generated by the subbasis of {\em open rays}\index{Topology!open ray} $X_{>x'}$ and $X_{<x''}$, with $x',x''\in X$. In particular this topology has as a basis the collection of {\em open intervals}\index{Topology!open interval} $X_{x'<\cdot<x''}$ with $x',x''\in X$, together with the sets $X_{\minelt{x}\,\leq\cdot<x''}$ and $X_{x'<\cdot\leq \maxelt{x}}$ in case $X$ contains a minimum element $\minelt{x}$ or a maximum element $\maxelt{x}$ (cf. \cite[Ch.2,\S14]{Mk},\cite[Ch.~1, \S2.1]{ArPo}).\footnote{To be fully correct, we should remark that a basis must also contain the set $X_{\minelt{x}\,\leq\cdot\leq \maxelt{x}}$, in the rare case that $X$ only contains the single point $\minelt{x}=\maxelt{x}$.} A subset $U$ of an ordered set $X$ is said to be {\em densely ordered}\index{Order!densely ordered}, if for any two different elements $x,x'\in X$ there lies an element $u\in U$ in between.

\begin{definition}
Suppose we have an expansion system $\Psi$, for which the sequences $S$ and $C$ both consist of only linearly ordered sets. Then, if each $F_i:S_i\rightarrow C_i\times S_{i+1}$ is monotonic when $C_i\times S_{i+1}$ is given the dictionary order, we call $\Psi$ a monotonic expansion system. More concrete, this condition on $F_i$ means that either for all elements $y<y'$ in $S_i$, we have that:
\begin{align}
P_i\,y&\leq P_i\,y'\quad\text{and}\label{EQ P increasing}\\
E_i\,y&<E_i\,y'\quad\text{whenever}\ P_i\,y=P_i\,y'.\label{EQ E increasing}
\end{align}
or for all elements $y<y'$ we have that:
\begin{align}
P_i\,y&\geq P_i\,y'\quad\text{and}\label{EQ P decreasing}\\
E_i\,y&>E_i\,y'\quad\text{whenever}\ P_i\,y=P_i\,y'.\label{EQ E decreasing}
\end{align}
If all $F_i$ are strictly increasing (i.e. we have \eqref{EQ P increasing} and \eqref{EQ E increasing}), we call $\Psi$ a {\em strictly increasing expansion system}.\index{Expansion system!strictly increasing}
\end{definition}

\begin{example}\label{EX first ex mon es}
The decimal expansion (Example \ref{EX dec exp 2}), is a strictly increasing ES. One easily verifies that \eqref{EQ P increasing} and \eqref{EQ E increasing} both hold. The continued fraction (Example \ref{EX cont frac}) is not a strictly increasing ES, but it is a monotonic ES, for we have that \eqref{EQ P decreasing} and \eqref{EQ E decreasing} holds for all $i\in \N$, where we should remark that $\infty$ is to be interpreted as the largest element in $C_i$.
\end{example}

From the definition of a strictly increasing ES, we almost immediately obtain the following property:

\begin{proposition}\label{PR non-dec}
Let $\Psi$ be strictly increasing expansion system, and let $\prodc$ be given the dictionary order. Then $\acP:S_0\rightarrow \prodc$ is a non-decreasing map.
\end{proposition}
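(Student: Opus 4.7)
The plan is to unfold what strict increase of each $F_i$ means on coefficient sequences, and then perform an induction down the tower of element spaces. Given $y<y'$ in $S_0$, I want to show that in the dictionary order on $\prodc$, the sequence $\acP\,y=(c_0,c_1,\ldots)$ is no larger than $\acP\,y'=(c_0',c_1',\ldots)$. By definition of dictionary order, this means either the two sequences are identical, or at the first coordinate $k$ where they differ we have $c_k<c_k'$.

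First I would record the key local consequence of strict increase of $F_i\colon S_i\to C_i\times S_{i+1}$: whenever $y_i<y_i'$, the pair $(P_i\,y_i,E_i\,y_i)$ is strictly less than $(P_i\,y_i',E_i\,y_i')$ in the dictionary order on $C_i\times S_{i+1}$, which by \eqref{EQ P increasing}--\eqref{EQ E increasing} precisely means the dichotomy: either $P_i\,y_i<P_i\,y_i'$, or $P_i\,y_i=P_i\,y_i'$ and then $E_i\,y_i<E_i\,y_i'$, i.e.\ $y_{i+1}<y_{i+1}'$.

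Next I would run the induction. Setting $y_0=y$ and $y_0'=y'$, apply the dichotomy at level $0$. In the first case we get $c_0<c_0'$, and we are done because $\acP\,y$ and $\acP\,y'$ already differ at coordinate $0$ with the smaller value on the left. In the second case we get $c_0=c_0'$ and $y_1<y_1'$, and we iterate at level $1$. Continuing, either some smallest $k$ arises at which the dichotomy falls in the first case, yielding $c_i=c_i'$ for $i<k$ and $c_k<c_k'$ (hence $\acP\,y<\acP\,y'$ in dictionary order), or no such $k$ exists, in which case $c_i=c_i'$ for every $i\in\N$ and $\acP\,y=\acP\,y'$. In either outcome $\acP\,y\leq\acP\,y'$, giving the non-decreasing conclusion.

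There is really no hard obstacle here; the only subtlety is bookkeeping the dictionary-order comparison and making sure the induction is phrased as a statement about the pair $(y_i,y_i')$ rather than a fixed index. I would not need to invoke any of the earlier properness or convergence results, nor does the argument use any topology on $S_0$ or $\prodc$; it is purely order-theoretic and follows directly from the definition of a strictly increasing expansion system.
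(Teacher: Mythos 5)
Your proof is correct and is essentially the paper's argument run in the direct direction: the paper assumes $\acP\,y>\acP\,y'$ for some $y<y'$, locates the first differing coefficient index $k$, and derives a contradiction with \eqref{EQ P increasing}, whereas you carry out the same level-by-level dichotomy forwards. If anything, your version is slightly more explicit about the induction that transports the strict inequality $y_i<y_i'$ down to level $k$ via \eqref{EQ E increasing}, a step the paper's proof leaves implicit.
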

\begin{proof}
Suppose we have $\acP\,y>\acP\,y'$ for certain $y<y'$. Then there must be a $k\geq 0$, such that $P_i\,y=P_i\,y'$ for $i<k$, and $P_k\,y> P_k\,y'$. But this contradicts \eqref{EQ P increasing} for $i=k$, from which we conclude that $\acP$ is non-decreasing.
\end{proof}

This simple proposition will be useful in the proof of the following two important theorems.

\begin{theorem}\label{TH densely ord}
Suppose $\Psi$ is a monotonic expansion system. If the set of elements of finite order $S^{[<\infty]}$ is densely ordered in $S_0$, then $\Psi$ is separating.
\end{theorem}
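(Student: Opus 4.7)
The plan is to argue by contradiction. Suppose $\Psi$ is not separating, so we can pick $y,y'\in S_0$ with $y\neq y'$ and $\acP\,y=\acP\,y'$; since $S_0$ is linearly ordered, we may assume $y<y'$. The key move is to apply the dense orderedness of $S_0^{[<\infty]}$ \emph{twice}, pinning down two \emph{distinct} finite-order elements $z,z''\in S_0^{[<\infty]}$ with $y<z<z''<y'$, and then extracting a contradiction from the uniqueness part of Theorem~\ref{TH char prop}.

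The heart of the argument is an intermediate claim: \emph{any} $w\in S_0$ with $y<w<y'$ satisfies $\acP\,w=\acP\,y$. I would prove this by induction on $i$, with the slightly strengthened hypothesis that $y_i,w_i,y_i'$ remain strictly ordered in $S_i$ (in one of the two possible directions) and $P_i\,y_i=P_i\,y_i'$. The base case $i=0$ is immediate. For the induction step, say $y_i<w_i<y_i'$. If $F_i$ is increasing, then \eqref{EQ P increasing} gives $P_i\,y_i\leq P_i\,w_i\leq P_i\,y_i'$, which together with $P_i\,y_i=P_i\,y_i'$ forces $P_i\,w_i=P_i\,y_i$; applying \eqref{EQ E increasing} then yields $y_{i+1}<w_{i+1}<y_{i+1}'$. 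The decreasing case is handled symmetrically via \eqref{EQ P decreasing}--\eqref{EQ E decreasing}, producing the reversed strict ordering $y_{i+1}>w_{i+1}>y_{i+1}'$. Either way, $P_i\,w_i$ coincides with the $i$-th coefficient of $y$, and both parts of the induction hypothesis survive to level $i+1$ (using $\acP\,y=\acP\,y'$ again).

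Applying the claim to $w=z$ and to $w=z''$ gives $\acP\,z=\acP\,z''=\acP\,y$. Letting $N$ be the larger of the orders of $z$ and $z''$, Corollary~\ref{CR conv wrt fin order} places both of them in $S_0^{[N]}$, and they share their first $N$ coefficients. Theorem~\ref{TH char prop} then produces a \emph{unique} element of $S_0^{[N]}$ with these coefficients, so $z=z''$, contradicting $z<z''$. This completes the contradiction, and separatedness follows.

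The main obstacle is that the monotonicity direction of $F_i$ may flip from one index to the next, so Proposition~\ref{PR non-dec} (which assumes a strictly increasing ES) cannot be invoked wholesale. The inductive formulation above sidesteps this by only tracking the relative ordering of the triple $(y_i,w_i,y_i')$ at each level, which behaves predictably under either an increasing or a decreasing $F_i$; the use of two distinct elements $z,z''$ supplied by the density hypothesis is what converts the weak conclusion ``all elements between $y$ and $y'$ have coefficient sequence $\acP\,y$'' into an actual contradiction via uniqueness.
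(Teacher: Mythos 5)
Your proof is correct, and the endgame (two distinct finite-order elements trapped between $y$ and $y'$, forced to share a coefficient code, contradicting the uniqueness in Theorem~\ref{TH char prop}) is exactly the paper's. Where you genuinely diverge is in how you establish that everything between $y$ and $y'$ has the same coefficient code. The paper first proves this only for \emph{strictly increasing} systems, by invoking Proposition~\ref{PR non-dec} ($\acP$ is non-decreasing into the dictionary order on $\prodc$), and then extends to general monotonic systems by a separate order-reversal construction: it inductively flips the orderings on some of the $S_i$ and $C_i$ so that every $F_i$ becomes strictly increasing, arguing that this surgery leaves the essential structure of the ES untouched. You instead run a direct induction on the ordered triple $(y_i,w_i,y_i)'$, observing that under either an increasing or a decreasing $F_i$ the sandwich $P_i\,y_i \lessgtr P_i\,w_i \lessgtr P_i\,y_i'$ collapses to equality because the outer two coefficients agree, and the strict ordering of the triple survives to the next level (possibly reversed). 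This buys you a self-contained argument that never needs Proposition~\ref{PR non-dec} or the somewhat informal claim that reversing orders ``lies outside the essential structure'' of the ES; the paper's route, in exchange, sets up the order-reversal trick as a reusable device that it deploys again in Theorem~\ref{TH monotonic es}. One cosmetic imprecision: your $z$ and $z''$ need not both lie in $S_0^{[N]}$ in the paper's notation (which means order \emph{exactly} $N$); what you actually use is that $z_N=z''_N=\neut_N$ with matching first $N$ coefficients, which is precisely what the backward implication of Theorem~\ref{TH char prop} requires, so the uniqueness conclusion stands (and the paper's own proof is no more careful on this point).
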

\begin{proof}
First we prove the theorem for strictly increasing expansion systems, and then we will show how the result can be naturally extended to monotonic ones. So let us be given a strictly increasing ES $\Psi$, and give $\prodc$ the dictionary order. If $\Psi$ would not be separating, then there are two different elements $y',y''\in S_0$, such that $\acP\,y'=\acP\,y''$. By Proposition \ref{PR non-dec}, we know that $\acP$ is non-decreasing, and hence $\acP y = \acP y'$ for all $y'\leq y \leq y''$. In particular, we can pick two different elements $z$,$z'$ of finite order, lying both between $y'$ and $y''$ (we can first apply the property of being densely ordered on $y'$ and $y''$ to get $z$, and then on $z$ and $y''$ to get $z'$). But then $z$ and $z'$ must have exactly the same coefficients, which contradicts that an element of finite order is uniquely determined by its coefficients (see Remark \ref{RM impl theor}). Hence $\Psi$ must be separating.

Now to show that the same is true if all $\acP_i$ are only monotonic. This can be achieved by transforming our monotonic $\Psi$ into a strictly increasing expansion system $\Psi'$, by reversing the order relations on some of the spaces $S_i$ and $C_i$. Note that reversing an order on a linearly ordered space keeps the space linearly ordered. The reversing procedure will be according to the following rule: starting from $i=0,1,\ldots$, we reverse the order on $S_{i+1}$ and $C_{i+1}$ either if $F_i$ is a strictly decreasing map in $\Psi$ and we haven't reversed the order of $S_i$ and $C_i$ in $\Psi'$, or if $F_i$ is strictly increasing in $\Psi$, but we also reversed the order on $S_i$ and $C_i$ in $\Psi'$. By doing this, we keep al strictly increasing maps strictly increasing, and at the same time strictly decreasing maps in $\Psi$ become strictly increasing ones in $\Psi'$, so that our new $\Psi'$ is a strictly increasing ES. As reversing orderings has neither any effect on the element and coefficient spaces themselves, nor on the collection of expansions and projections (in fact we are only changing properties which lie outside the essential structure of the ES), the claim of the theorem also applies to $\Psi'$.\footnote{Also the property of $S^{[<\infty]}$ being densely ordered in $S_0$ cannot have been affected by the process of reversing orderings, even if we would have reversed the order on $S_0$ itself, which we haven't.} Hence, the statement extends without problems to monotonic expansion systems.
\end{proof}

\begin{theorem}\label{TH monotonic es}
Suppose $\Psi$ is a proper monotonic expansion system, and $S_0$ has the order topology. Then $\Psi$ is accurate, iff $\Psi$ is separating.
\end{theorem}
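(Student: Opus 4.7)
The plan is to prove the two implications separately. For the forward direction I would argue by contrapositive: suppose $\Psi$ is not separating, so there exist distinct $y,y'\in S_0$ with $\acP\,y=\acP\,y'$. By Proposition \ref{PR eq coef} the sequences $y^{[n]}$ and ${y'}^{[n]}$ coincide for every $n$, so if $\Psi$ were accurate the single sequence $y^{[n]}={y'}^{[n]}$ would converge simultaneously to $y$ and to $y'$. Since the order topology on a linearly ordered set is always Hausdorff, limits are unique, giving the required contradiction. Hence accurate implies separating.

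For the backward direction I would first reduce to the case that $\Psi$ is strictly increasing by the same order-reversal trick used in the proof of Theorem \ref{TH densely ord}. Reversing the orders on the auxiliary spaces $S_i$ (for $i\geq 1$) and $C_i$ affects neither the underlying maps $P_i, E_i$, the coefficients $c_i$, nor the convergents $y^{[n]}$; it also preserves whether $\Psi$ is separating; and since the order on $S_0$ is never touched, the order topology on $S_0$ (and thus the statement of accurateness) is unchanged. So assume $\Psi$ strictly increasing. By Proposition \ref{PR non-dec}, $\acP\colon S_0\rightarrow\prodc$ is non-decreasing in the dictionary order, and together with injectivity (which is precisely what separating means) it is strictly increasing.

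To conclude accurateness I would show that $y^{[n]}\rightarrow y$ in the order topology for every $y\in S_0$. Since the open rays $S_{0,>a}$ and $S_{0,<b}$ form a subbasis, it suffices to verify that each such subbasic neighborhood of $y$ eventually contains $y^{[n]}$. Take $a\in S_0$ with $a<y$. Then $\acP\,a<\acP\,y$ in the dictionary order, so there is a first coordinate $k$ where they differ, and at that coordinate $\acP_k\,a<\acP_k\,y$. For any $n>k$, Theorem \ref{TH char prop} gives $\acP_i\,y^{[n]}=\acP_i\,y$ for $i<n$, hence in particular for $i\leq k$. Comparing componentwise yields $\acP\,y^{[n]}>\acP\,a$ in the dictionary order, and the contrapositive of $\acP$ being non-decreasing forces $y^{[n]}>a$, so $y^{[n]}\in S_{0,>a}$ from index $k+1$ onwards. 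The case $y<b$ is handled by a symmetric argument. Combining the two subbasic cases gives convergence of $y^{[n]}$ to $y$, which is the required accurateness.

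The hard part will be the backward direction, specifically translating the inequality $a<y$ in $S_0$ into information about $\acP\,y^{[n]}$ via the dictionary order. The key trick is to locate the first index of disagreement between $\acP\,a$ and $\acP\,y$ and then to exploit the fact that $\acP\,y^{[n]}$ agrees with $\acP\,y$ on the first $n$ coordinates; this is exactly where separating (via strict monotonicity of $\acP$) does the essential work. A secondary subtlety is confirming that the reversal trick does not disturb the order topology on $S_0$, which is true precisely because $S_0$'s own order is left intact.
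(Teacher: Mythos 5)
Your proof is correct and follows essentially the same route as the paper's: the forward direction via Hausdorffness of the order topology, and the backward direction via the reduction to the strictly increasing case, Proposition~\ref{PR non-dec} plus injectivity (separating) to get strict monotonicity of $\acP$, and the agreement of the first $n$ coefficients of $y^{[n]}$ with those of $y$. The only difference is presentational: where you verify convergence directly on the subbasic open rays of $S_0$, the paper factors the same computation through a comparison of topologies on $\prodc$ (showing the discrete product topology refines the dictionary-order topology) and then pulls this back along $\acP$ to the order topology on $S_0$.
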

\begin{proof}

As in Theorem \ref{TH densely ord}, we first prove the statement for strictly increasing $\Psi$, and then extend it to the case $\Psi$ is monotonic.

First let us assume that $\Psi$ is accurate, but not separating. Then there are $y,y'\in S_0$, having equal coefficients and $\Psi$ is accurate w.r.t. both of them. This means that $y^{[n]}=y'^{[n]}$ converges to two elements at the same time, and so $S_0$ cannot be a Hausdorff space. But $S_0$ has the order topology, which is clearly an Hausdorff space. Hence $\Psi$ must be separating.

To prove the reverse implication, first we give $\prodc$ the dictionary order, as well as the order topology $\topC$ generated by this ordering. We now prove that the discrete product topology $\topDP$, is stronger than the dictionary order topology $\topC$ on $\prodc$. For let $V$ be an open set in the dictionary order topology, containing an element $\seqc=(c_0,c_1,\ldots)$. W.l.o.g. we may assume that $V$ is either $\prodc_{\seqc'<\cdot<\seqc''}$ or $\prodc_{\minelt{\seqc}\,\leq\cdot<\seqc''}$ or $\prodc_{\seqc'<\cdot\leq \maxelt{\seqc}}$, where $\minelt{\seqc}$ and $\maxelt{\seqc}$ denote respectively the smallest and the largest element in $\prodc$, if it exists. First we regard the case $\prodc_{\seqc'<\cdot<\seqc''}$. As $\seqc$ is supposed to lie between $\seqc'$ and $\seqc''$, we have on the one hand that $\seqc'$ and $\seqc$ have equal coefficients up to and including $n'-1$ and $c_{n'}'<c_{n'}$. On the other hand we have that $\seqc$ and $\seqc''$ have equal coefficients up to and including $n''-1$, and $c_{n''}''<c_{n''}$. This means that any coefficient sequence with the same coefficients as $\seqc$ up to and including $m=\max(n',n'')$ will lie in the set $\prodc_{\seqc'<\cdot<\seqc''}$. Hence $\pi_n^{\inv}(c_0,\ldots,c_{m})$ is a neighborhood of $\seqc$ in the discrete product topology, that is contained in $V$.

If $V$ is a set of the form $\prodc_{\minelt{\seqc}\,\leq\cdot<\seqc''}$, then we may assume that $\seqc$ is the minimal element $\minelt{\seqc}$, for otherwise we can apply the previous result to $\prodc_{\minelt{\seqc}<\cdot<\seqc''}$. But in that case, we can consider the set $\pi_n^{\inv}(c_0,\ldots,c_{m})$, where $m$ is the first coefficient such that $c_m<c_m''$. A similar argument applies to the case $V=\prodc_{\seqc'<\cdot\leq \maxelt{\seqc}}$, and thus it follows that the discrete product topology is stronger or equal to the dictionary order topology.

By Theorem \ref{TH conv weakly discrete top}, we have that $\acP\,y^{[n]}$ converges to $\acP\,y$ in the discrete product topology. Therefore, $\acP\,y^{[n]}$ also converges to $\acP\,y$ in the dictionary order topology $\topC$. This, in turn, implies that $y^{[n]}$ converges to $y$ in the generated topology $\acP^{\inv}(\topC)$. By Proposition \ref{PR non-dec}, we know that $\acP$ is non-decreasing, and as $\Psi$ is supposed to be separating, so that $\acP$ is injective, we conclude that it must be strictly increasing.  Therefore, we have that the topology $\acP^{\inv}(\topC)$ is stronger or equal to the order topology on $S_0$, for we can write $(S_0)_{>y'}$ as $\acP^{\inv}(\prodc_{>\acP(y')})$, and likewise for $(S_0)_{<y''}$. Hence, we have at last that $y^{[n]}$ converges to $y$ in the order topology of $S_0$, which means that $\Psi$ is accurate.

To show that the same is true if all $\acP_i$ are only monotonic, we apply exactly the same trick as we have done in the proof of Theorem \ref{TH densely ord}. Again, this doesn't affect the essential properties of $\Psi$, nor does it affect the order topology on $S_0$. Hence, the result follows for proper monotonic ES's as well.
\end{proof}

\begin{corollary}\label{CR dens ord acc}
Suppose $\Psi$ is a proper monotonic expansion system, and $S_0$ has the order topology. If $S^{[<\infty]}$ is densely ordered in $S_0$, then $\Psi$ is accurate.
\end{corollary}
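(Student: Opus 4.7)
The plan is to apply the two preceding theorems in sequence: Theorem \ref{TH densely ord} converts the hypothesis on dense orderedness of $S_0^{[<\infty]}$ into the separating property, and Theorem \ref{TH monotonic es} then converts separation into accuracy under the order-topology and properness assumptions that are already in the hypothesis.

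More concretely, first I would observe that $\Psi$ is a monotonic expansion system (as given) with $S_0^{[<\infty]}$ densely ordered in $S_0$. Theorem \ref{TH densely ord} requires exactly these two conditions on $\Psi$, with no topological assumption, and yields that $\Psi$ is separating. Next, I note that all the hypotheses of Theorem \ref{TH monotonic es} are met: $\Psi$ is proper, $\Psi$ is monotonic, and $S_0$ carries the order topology. The ``if'' direction of that theorem states that separation is sufficient for accuracy, so combining with the previous step gives that $\Psi$ is accurate.

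There is no real obstacle here; the content of the corollary is fully absorbed by the two theorems above. The only thing worth checking is that one does not need anything beyond what has been proved, namely that Theorem \ref{TH densely ord} did not secretly require the order topology (it did not; it is a purely order-theoretic statement) and that Theorem \ref{TH monotonic es} is applied in the direction ``separating $\Rightarrow$ accurate'' rather than the converse. Once that is noted, the proof is just the two-line chain sketched above.
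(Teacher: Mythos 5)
Your proof is correct and is exactly the paper's argument: combine Theorem \ref{TH densely ord} (dense orderedness of the finite-order elements gives the separating property) with the ``separating implies accurate'' direction of Theorem \ref{TH monotonic es}. Nothing further is needed.
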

\begin{proof}
This follows immediately by combining Theorem \ref{TH densely ord} with Theorem \ref{TH monotonic es}.
\end{proof}

This corollary can be used to prove convergence of the ES's in Example \ref{EX first ex mon es}:

\begin{example}\label{EX acc dec exp cont frac}
The decimal expansion (Example \ref{EX dec exp 2}) and the continued fraction (Example \ref{EX cont frac}) are accurate, if $S_0$ is given the standard topology (i.e. the Euclidean topology) on the real numbers.
\end{example}
\begin{proof}
That these expansion systems are proper and monotonic, we have already shown in former examples. Noting that the standard topology on the reals coincides with the order topology (cf. \cite[\S14, Ex.1]{Mk}), it remains to prove that in both cases $S^{[<\infty]}$ is densely ordered in $S_0$. For the decimal expansion, this is easy, as we have that the elements of finite order are exactly all multiples of $10^{-k}$ between (and including) $0$ and $1$, where $k$ can be any positive number. These numbers are easily seen to be densely ordered (given two of them, we can multiply by a suitable power of $10$, so that it boils down to picking a natural number between two other natural numbers). Hence $\Psi$ is accurate in the case of decimal expansion.

In case of the continued fractions, we claim that the elements of finite order are exactly the fractions between (and including) $0$ and $1$. To see this, we can look at the function $M:\Q_{0\leq\cdot<1}\rightarrow \N$, which assigns to a fraction $p/q$ (where $p$ and $q$ are written in their lowest terms) the value $p+q$. Then one verifies that for $(p,q)\neq (0,1)$:
$$
M\left(E_i \left(\frac{p}{q}\right)\right) = M\left(\frac{q-c_i p}{p}\right)= q-(c_i-1)p<p+q=M\left(\frac{p}{q}\right)
$$
So if $p/q$ is non-zero, then the value of $M$ decreases with each application of $E_i$, while it should remain positive. This can only be the case if for each fraction $y$ in $S_0$, $\acE_n\,y$ equals $0 = \frac{0}{1}$ for $n$ big enough, and this $n$ is by definition the order of $y$. Also the rational numbers are of course densely ordered, and so by Corollary \ref{CR dens ord acc}, we have that the ES of continued fractions is accurate.
\end{proof}

There are many more instances of monotonic expansion systems. We will give some more examples.

\begin{example}[\bf Egyptian Fractions and Engel Expansion]\label{EX eg frac en exp}
Let $(S_i,\neut_i)=([0,1),0)$, and $C_i=\Z_{\geq 1}\cup\{\infty\}$. Furthermore, let $P_i$ be given by
$$
P_i\,y = \begin{cases} \infty &\text{if}\quad y = 0\\
\lceil 1/y\rceil, &\text{otherwise}, \end{cases}
$$
and $E_i$ by
$$
E_i\,y = \begin{cases} 0 &\text{if}\quad y = 0\\
y-\frac{1}{P_i\,y} &\text{otherwise}. \end{cases}
$$
Now $F_i = (P_i,E_i)$ is a bijection, and we have as inverse:
$$
F_i^{\inv}(c,y)= \begin{cases} 0 &\text{if}\quad c = \infty\\
\frac{1}{c}+y &\text{otherwise}. \end{cases}
$$
By Proposition \ref{PR bijective F}, this $\Psi$ is proper. If we give each $C_i=\Z_{\geq 1}\cup\{\infty\}$ the reversed order, then each $F_i$ is a strictly increasing map. In the way we have demonstrated in Example \ref{EX acc dec exp cont frac}, we can prove that $S_0^{[<\infty]}$ is again exactly the set of rational numbers in $S_0$ (this time, look at $M(p/q)=p$), so that $\Psi$ is accurate by Corollary \ref{CR dens ord acc}. One easily sees that the approximation this ES provides, is just the sum of reciprocals of the coefficients. For example, if we take the element $y = 1/\sqrt{2}\in S_0$, then we get the approximation:
$$
\frac{1}{\sqrt{2}} = \frac1 2 + \frac1 5 + \frac1 {141} + \frac1 {68575} + \ldots.
$$
Breaking off the series at a given depth, the resulting approximation will be an expression which is a sum of only unit fractions. Such expressions are usually referred to as {\em Egyptian fractions}.

Now let's make a slight change to our $\Psi$: we keep all components the same, except for $E_i$, for which we set:
$$
E_i\,y = \begin{cases} 0 &\text{if}\quad y = 0\\
y\cdot P_i(y)-1 &\text{otherwise}. \end{cases}
$$
The inverse of $F_i$ is then given by:
$$
F_i^{\inv}(c,y) = \begin{cases} 0 \quad&\text{if}\quad c = \infty\\
\frac{1+y}{c} \quad&\text{otherwise}. \end{cases}
$$
Again, $\Psi$ is proper by Proposition \ref{PR bijective F}, and each $F_i$ is strictly increasing if we give $C_i$ the reversed order. $S^{[<\infty]}$ is the set of rationals (look at $M(p/q)=p$), and thus $\Psi$ is accurate by Corollary \ref{CR dens ord acc}. For the $n$-th approximation we find:
\begin{align*}
y_n^{[n]} &= 0\\
y_{n-1}^{[n]} &= \frac{1}{c_{n-1}}\\
y_{n-2}^{[n]} &= \cfrac{1+\cfrac{1}{c_{n-1}}}{c_{n-2}}\\
\ldots\\
y_0^{[n]} &= \cfrac{1+\cfrac{1+\cfrac{1+\cfrac{1}{c_{n-1}}}{\ldots}}{c_1}}{c_0}.
\end{align*}
Hence we have the concise expression for $y$:
\begin{equation}\label{EQ eng exp}
y\ =\ \cfrac{1+\cfrac{1+\cfrac{1+\ldots}{c_2}}{c_1}}{c_0}\ =\ \frac1 {c_0}+\frac1 {c_0c_1}+\frac1 {c_0c_1c_2} + \ldots
\end{equation}
Here, the approximations are a special kind of Egyptian fractions, in which the denominators of consecutive fractions increase by a non-decreasing sequence of factors $c_0,c_1,\ldots$. Such approximations are called {\em Engel expansions} (cf. \cite{En}\cite{ErReSz}).
\end{example}

\begin{example}[\bf Taylor Series]\label{EX tay ser mon}
Let $\Psi$ be as in Example \ref{EX taylor}, except that we confine the spaces $S_i$ to only real holomorphic functions. We can view $\Psi$ now as a strictly increasing ES, if we give $S_i$ the ordering defined by:
\begin{equation}\label{EQ ord tay}
y<y'\quad\text{if}\quad y(x)<y'(x)\quad\text{for all}\ x\in (x_0,x_0+\epsilon),
\end{equation}
for some sufficiently small $\epsilon>0$, depending on $y$ and $y'$. By the limit properties of $y$ in $x_0$, this ordering coincides with the dictionary order on the derivatives of $y$, and thus the Taylor coefficients. So this demonstrates that $\Psi$ is monotonic. As for the accurateness of $\Psi$: this is not a very interesting question, as Taylor approximations will at least locally converge just by the definition of holomorphic functions. But indeed, the elements of finite order are the polynomials, and in the ordering we have just put on our element spaces, these are densely ordered in the set of holomorphic functions. So by Corollary \ref{CR dens ord acc}, the Taylor polynomials converge to the initial function in the ordering given by \eqref{EQ ord tay}.
\end{example}

The monotonic expansion systems seem to capture quite some well-known expansions. As for the continued fraction, in which the expanding maps $E_i$ are all strictly decreasing, a similar generalization has priorly been studied by Bissinger in \cite{Bi}, and for the decimal expansions, in which the expanding maps are strictly increasing, a generalization has been introduced by Everett in \cite{Ev}. Convergence and other properties have been further investigated in \cite{Me}\cite{Pa}\cite{Rec}\cite{Ren}. As R\'enyi remarked in \cite{Ren}, the formulation of Bissinger and Everett are somewhat stricter than necessary. In an elementary formulation (additional requirements could be imposed to obtain convergence), the so-called "f-expansion" of Bissinger and Everett can be translated into an expansion system as follows:

\begin{example}[\bf f-Expansion]\label{EX f expansion}
Let $\Psi$ be such that each $(S_i,\neut_i)$ equals $([0,1),0)$, and let $f:[0,1)\rightarrow I$ be a monotonic mapping from $[0,1)$ onto a certain interval $I$ of the extended real line $\R\cup\{-\infty,+\infty\}$. Let each $C_i$ equal $\Z\cup\{-\infty,+\infty\}$. We set all $P_i$ identical to
$$
P_i\,y = \lfloor f(y) \rfloor,
$$
and all $E_i$ identical to
$$
E_i\,y = f(y) - P_i\,y = f(y) - \lfloor f(y) \rfloor,
$$
where $\lfloor -\infty \rfloor:= -\infty$ and $\lfloor +\infty \rfloor:= +\infty$. It should be noted that, because $[0,1)$ is an half open interval, the value $+\infty$ can only be attained for strictly decreasing $f$, and $-\infty$ for strictly increasing $f$. This expansion system models Everett's $f$-expansion for strictly increasing $f$, and Bissinger's $f$-expansion for strictly decreasing $f$.
\end{example}

\newpage
\section{Isomorphisms between Expansion Systems}

In this section we will introduce the concept of homomorphisms, and more importantly isomorphisms between ES's. When two ES's are isomorphic, then they essentially have the same underlying structure, and so they also share the other properties that depend on this structure. The isomorphism itself serves as a translation from one ES to another. Definition \ref{DF homom isom} will give us the tools to construct such homomorphisms and isomorphisms.

\subsection{Homomorphisms and Isomorphisms}

\begin{definition}\label{DF homom isom}
Let $\Psi=(S,C,F)$ and $\alt \Psi = (\alt S,\alt C,\alt F)$ be two expansion systems. A pair of maps $\Lambda = (\LambdaE{S},\LambdaE{C}) = (\{\LambdaU{S}{i}\}_{i\in \N},\{\LambdaU{C}{i}\}_{i\in \N})$, such that \begin{align}
\LambdaU{S}{i}\colon  S_i &\rightarrow \alt S_i\label{EQ lambda S}\\
\LambdaU{C}{i}\colon  C_i &\rightarrow \alt C_i\label{EQ lambda C}\\
\intertext{is called a {\em homomorphism}\index{Expansion system!homomorphism} of expansion systems, if the following equations hold:}
\LambdaU{S}{i} \neut_i &= \alt \neut_i\label{EQ hom neut}\\
\LambdaU{S}{i+1} E_i &= \alt E_i \LambdaU{S}{i}\label{EQ hom E}\\
\LambdaU{C}{i} P_i &= \alt P_i \LambdaU{S}{i}.\label{EQ hom P}
\end{align}

If this $\Lambda$ consists of only injective maps, then $\Lambda$ is called an {\em embedding} of $\Psi$ in $\alt \Psi$\index{Expansion system!embedding}. If $\Lambda$ is bijective, then we call it an {\em isomorphism}\index{Expansion system!isomorphism} and we say that $\Psi$ and $\alt \Psi$ are isomorphic.
\end{definition}

Based on this definition, we can already get an idea of its purpose by the following proposition.

\begin{proposition}\label{PR homom}
Suppose $\Psi$ and $\alt \Psi$ are ES's, and $\Lambda$ is an homomorphism from $\Psi$ into $\alt \Psi$. If $y\in S_0^{(n)}$, then $\LambdaU{S}{0}\,y\in  \alt{S_0}^{(n)}$, and we have $(\LambdaU{S}{0}\, y)^{[\Psi',n]}=\LambdaU{S}{0}\,y^{[\Psi,n]}$.
\end{proposition}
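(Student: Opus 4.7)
The plan is to reduce the claim to Theorem~\ref{TH char prop}, thereby bypassing any direct manipulation of the partial inverses $F_i^{\inv}$. By that theorem, membership $y\in S_0^{(n)}$ is equivalent to the existence of a witness $z:=y^{[\Psi,n]}\in S_0^{[n]}$ whose first $n$ coefficients agree with those of $y$. So I want to exhibit $\LambdaU{S}{0}\,z$ as the corresponding witness in $\alt{\Psi}$, showing simultaneously that $\LambdaU{S}{0}\,y\in\alt{S_0}^{(n)}$ and that its $n$-th convergent is $\LambdaU{S}{0}\,z$.

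First I would record two elementary intertwining identities obtained by iterating \eqref{EQ hom E} and combining it with \eqref{EQ hom P}, namely
\begin{equation*}
\alt{\acE}_i\,\LambdaU{S}{0} = \LambdaU{S}{i+1}\,\acE_i
\qquad\text{and}\qquad
\alt{\acP}_i\,\LambdaU{S}{0} = \LambdaU{C}{i}\,\acP_i,
\end{equation*}
both for all $i\in\N$. The first follows by a straightforward induction on $i$, applying \eqref{EQ hom E} at each step; the second then follows by precomposing with $P_i$ and invoking \eqref{EQ hom P}.

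Next I would verify the two hypotheses of Theorem~\ref{TH char prop} for $\alt{\Psi}$ at the element $\LambdaU{S}{0}\,y$, using $\LambdaU{S}{0}\,z$ as the candidate witness. For the order condition, since $z\in S_0^{[n]}$ we have $\acE_{n-1}\,z=\neut_n$, and the first intertwining identity together with \eqref{EQ hom neut} gives $\alt{\acE}_{n-1}\,\LambdaU{S}{0}\,z = \LambdaU{S}{n}\,\acE_{n-1}\,z = \LambdaU{S}{n}\,\neut_n = \alt{\neut}_n$, so $\LambdaU{S}{0}\,z\in\alt{S_0}^{[n]}$. For the coefficient matching, the second intertwining identity yields $\alt{\acP}_i\,\LambdaU{S}{0}\,z = \LambdaU{C}{i}\,\acP_i\,z = \LambdaU{C}{i}\,\acP_i\,y = \alt{\acP}_i\,\LambdaU{S}{0}\,y$ for all $i\le n-1$, where the middle equality uses that $z$ and $y$ share their first $n$ coefficients.

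Applying Theorem~\ref{TH char prop} to $\alt{\Psi}$ then gives both $\LambdaU{S}{0}\,y\in\alt{S_0}^{(n)}$ and, by the uniqueness clause, $(\LambdaU{S}{0}\,y)^{[\alt{\Psi},n]} = \LambdaU{S}{0}\,z = \LambdaU{S}{0}\,y^{[\Psi,n]}$, which is the desired conclusion. There is no substantive obstacle here; the only thing to take care with is the direction of the argument, namely constructing the convergent on the $\Psi$ side and transporting it across $\Lambda$, rather than attempting to build convergents in $\alt{\Psi}$ directly and match them against the $\Psi$-convergent, which would force one to reason about $\alt{F}_i^{\inv}$ without any a priori guarantee that the relevant preimages exist.
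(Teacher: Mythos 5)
Your proposal is correct and follows essentially the same route as the paper's own proof: obtain the witness $y^{[\Psi,n]}\in S_0^{[n]}$ from Theorem~\ref{TH char prop}, transport it by $\LambdaU{S}{0}$ using \eqref{EQ hom neut}, \eqref{EQ hom E} and \eqref{EQ hom P}, and apply Theorem~\ref{TH char prop} again in $\alt\Psi$ together with its uniqueness clause. The only difference is that you make the intertwining identities and the induction explicit, which the paper leaves implicit.
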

\begin{proof}
If $y\in S_0^{(n)}$, it follows by Theorem \ref{TH char prop} that there is an element $y^{[\Psi,n]}\in S_0^{[n]}$ with the same coefficients as $y$ up to and including order $n-1$. Now by \eqref{EQ hom neut} and \eqref{EQ hom E}, we have that $\acE_n(\LambdaU{S}{0}\,y^{[\Psi,n]}) = \alt \neut_n$ and thus $\LambdaU{S}{0}\,y^{[\Psi,n]}\in \alt{S_0}^{[n]}$. By \eqref{EQ hom P} we have that $\LambdaU{S}{0}\,y^{[\Psi,n]}$ has the same coefficients as $\LambdaU{S}{0}\,y$ up to and including order $n-1$, and thus Theorem \ref{TH char prop} tells us that $\LambdaU{S}{0}\,y\in \alt{S_0}^{(n)}$ and $(\LambdaU{S}{0} \,y)^{[\Psi',n]}=\LambdaU{S}{0}\,y^{[\Psi,n]}$.
\end{proof}

The equations \eqref{EQ hom E} and \eqref{EQ hom P} can be rephrased in terms of diagrams. These requirements are satisfied, iff the diagrams in Figure \ref{FI hom E hom P} commute.
\begin{figure}[h]
\centering
\begin{minipage}[b]{0.35\linewidth}
\centering
 \begin{tikzpicture}[descr/.style={fill=white,inner sep=2.5pt}]
 \matrix (m) [matrix of math nodes, row sep=3em,
 column sep=3em]
 { S_i & S_{i+1}\\
   \alt S_i & \alt S_{i+1}\\ };
 \path[->,font=\scriptsize]
 (m-1-1) edge node[auto] {$ E_i $} (m-1-2)
         edge node[auto] {$ \LambdaU{S}{i} $} (m-2-1)
 (m-1-2) edge node[auto] {$ \LambdaU{S}{i+1} $} (m-2-2)
 (m-2-1) edge node[auto] {$ \alt E_i $} (m-2-2);
 \end{tikzpicture}
\end{minipage}
\hspace{0.0cm}
\begin{minipage}[b]{0.35\linewidth}
\centering
 \begin{tikzpicture}[descr/.style={fill=white,inner sep=2.5pt}]
 \matrix (m) [matrix of math nodes, row sep=3em,
 column sep=3em]
 { S_i & C_{i} \\
   \alt S_i & \alt C_{i}\\ };
 \path[->,font=\scriptsize]
 (m-1-1) edge node[auto] {$ P_i $} (m-1-2)
         edge node[auto] {$ \LambdaU{S}{i} $} (m-2-1)
 (m-1-2) edge node[auto] {$ \LambdaU{C}{i} $} (m-2-2)
 (m-2-1) edge node[auto] {$ \alt P_i $} (m-2-2);
 \end{tikzpicture}
\end{minipage}
\caption{Equations \eqref{EQ hom E} and \eqref{EQ hom P}.}
\label{FI hom E hom P}
\end{figure}
From these diagrams it easily follows that if $\Lambda$ is bijective, then the set of inverse mappings $\Lambda^{\inv} = ((\LambdaE{S})^{\inv},(\LambdaE{C})^{\inv})$ is an isomorphism as well. This gives us the following corollary to Proposition \ref{PR homom}.

\begin{corollary}\label{CR isom}
If $\Psi$ and $\alt \Psi$ are isomorphic by the isomorphism $\Lambda$, then $y\in S_0^{(n)}$ iff $\LambdaU{S}{0}y\in \alt{S_0}^{(n)}$, and we have:
\begin{align}
y^{[\Psi,n]}&=(\LambdaU{S}{0})^{\inv}(\LambdaU{S}{0}\,y)^{[\Psi',n]},\text{\ and}\\ {y'}^{[\Psi',n]}&=\LambdaU{S}{0}\,((\LambdaU{S}{0})^{\inv}\,y')^{[\Psi,n]}.
\end{align}
\end{corollary}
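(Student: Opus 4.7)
The plan is to apply Proposition \ref{PR homom} twice: once to $\Lambda$ in the direction $\Psi \to \alt\Psi$, and once to the inverse pair $\Lambda^{\inv} = ((\LambdaE{S})^{\inv},(\LambdaE{C})^{\inv})$ in the reverse direction. The paragraph immediately preceding the corollary already remarks that $\Lambda^{\inv}$ is itself an isomorphism, so this is the only piece of setup required; one verifies \eqref{EQ hom neut}--\eqref{EQ hom P} for $\Lambda^{\inv}$ by pre- and post-composing the corresponding identities for $\Lambda$ with the appropriate inverse maps (for instance, from $\LambdaU{S}{i+1} E_i = \alt E_i \LambdaU{S}{i}$ one obtains $(\LambdaU{S}{i+1})^{\inv} \alt E_i = E_i (\LambdaU{S}{i})^{\inv}$, which is \eqref{EQ hom E} for $\Lambda^{\inv}$).

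With this in hand, the equivalence $y \in S_0^{(n)} \iff \LambdaU{S}{0}\,y \in \alt{S_0}^{(n)}$ follows immediately: the forward direction is Proposition \ref{PR homom} applied to $\Lambda$, and the backward direction is Proposition \ref{PR homom} applied to $\Lambda^{\inv}$ at the element $\LambdaU{S}{0}\,y$, noting that $(\LambdaU{S}{0})^{\inv}(\LambdaU{S}{0}\,y) = y$.

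For the two displayed identities: Proposition \ref{PR homom} gives $(\LambdaU{S}{0}\,y)^{[\Psi',n]} = \LambdaU{S}{0}\,y^{[\Psi,n]}$, and applying $(\LambdaU{S}{0})^{\inv}$ to both sides yields the first equation. Symmetrically, applying Proposition \ref{PR homom} to the isomorphism $\Lambda^{\inv}$ with input $y' \in \alt{S_0}^{(n)}$ produces $((\LambdaU{S}{0})^{\inv}\,y')^{[\Psi,n]} = (\LambdaU{S}{0})^{\inv}\,{y'}^{[\Psi',n]}$, and applying $\LambdaU{S}{0}$ to both sides yields the second equation.

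There is essentially no obstacle: the corollary is a purely formal consequence of Proposition \ref{PR homom} together with the observation that inverses of isomorphisms are isomorphisms. The only minor care point is to avoid circularity when invoking Proposition \ref{PR homom} for $\Lambda^{\inv}$; this is handled by first recording that $\Lambda^{\inv}$ satisfies the homomorphism axioms, which is exactly the content of the remark the author makes just before stating the corollary.
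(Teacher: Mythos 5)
Your proposal is correct and follows exactly the route the paper intends: the paper gives no separate proof of the corollary, but states it as an immediate consequence of Proposition \ref{PR homom} together with the preceding observation that $\Lambda^{\inv}$ is again an isomorphism, which is precisely the two-directional application you carry out.
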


\begin{remark}\label{RM isom prop}
In particular, this corollary says that if $\Psi$ and $\Psi'$ are isomorphic, then properness of one of them also guarantees properness of the other. This cannot be said of accurateness, as the concept of isomorphy is indifferent to which topology is given on the initial element space $S_0$. However, if we have that $\LambdaU{S}{0}$ is a homeomorphism between $S_0$ and $S_0'$, then of course convergence or accurateness of one of the ES's, implies respectively convergence and accurateness of the other.
\end{remark}

Furthermore, it should be noted that the relationship of being isomorphic defines an equivalence relation among expansion systems, as is easily verified.

We will now regard some elementary properties for commutative diagrams, which we will then apply to the situation above. These properties can be best illustrated with the diagrams of Figure \ref{FI prop com diag}.
\begin{figure}[h]
\centering
\subfloat[]{\begin{minipage}[t]{0.25\linewidth}

\label{FI inj exists}
\begin{tikzpicture}[descr/.style={fill=white,inner sep=2.5pt}]
 \matrix (m) [matrix of math nodes, row sep=3em, column sep=3em,ampersand replacement=\&]
 {X \& Y' \\
  Y \& Z \\};
 \path[->,font=\scriptsize]
    (m-1-1) edge (m-1-2)
    (m-1-2) edge (m-2-2);
 \path[right hook->,font=\scriptsize]
    (m-1-1) edge node[auto] {$ i $} (m-2-1);
 \path[->, loosely dashed, font=\scriptsize]
    (m-2-1) edge node[auto] {$ \phi_a $} (m-2-2)
    (m-2-1) edge node[below, sloped] {exists} (m-2-2);
\end{tikzpicture}
\end{minipage}}
\subfloat[]{\begin{minipage}[t]{0.25\linewidth}

\label{FI surj exists}
\begin{tikzpicture}[descr/.style={fill=white,inner sep=2.5pt}]
 \matrix (m) [matrix of math nodes, row sep=3em, column sep=3em,ampersand replacement=\&]
 {X \& Y' \\
  Y \& Z \\};
 \path[->,font=\scriptsize]
    (m-1-1) edge (m-1-2)
    (m-1-2) edge (m-2-2);
 \path[->>,font=\scriptsize]
    (m-2-1) edge node[auto] {$ s $} (m-2-2);
 \path[->, loosely dashed, font=\scriptsize]
    (m-1-1) edge node[auto] {$ \phi_b $} (m-2-1)
    (m-1-1) edge node[below, sloped] {exists} (m-2-1);
\end{tikzpicture}
\end{minipage}}
\subfloat[]{\begin{minipage}[t]{0.25\linewidth}

\label{FI surj unique}
\begin{tikzpicture}[descr/.style={fill=white,inner sep=2.5pt}]
 \matrix (m) [matrix of math nodes, row sep=3em, column sep=3em,ampersand replacement=\&]
 {X \& Y' \\
  Y \& Z \\};
 \path[->,font=\scriptsize]
    (m-1-1) edge (m-1-2)
    (m-1-2) edge (m-2-2);
 \path[->>,font=\scriptsize]
    (m-1-1) edge node[auto] {$ s $} (m-2-1);
 \path[->, densely dotted, font=\scriptsize]
    (m-2-1) edge node[auto] {$ \phi_c $} (m-2-2)
    (m-2-1) edge node[below, sloped] {unique} (m-2-2);
\end{tikzpicture}
\end{minipage}}
\subfloat[]{\begin{minipage}[t]{0.25\linewidth}

\label{FI inj unique}
\begin{tikzpicture}[descr/.style={fill=white,inner sep=2.5pt}]
 \matrix (m) [matrix of math nodes, row sep=3em, column sep=3em,ampersand replacement=\&]
 {X \& Y' \\
  Y \& Z \\};
 \path[->,font=\scriptsize]
    (m-1-1) edge (m-1-2)
    (m-1-2) edge (m-2-2);
 \path[right hook->,font=\scriptsize]
    (m-2-1) edge node[auto] {$ i $} (m-2-2);
 \path[->, densely dotted, font=\scriptsize]
    (m-1-1) edge node[auto] {$ \phi_d $} (m-2-1)
    (m-1-1) edge node[below, sloped] {unique} (m-2-1);
\end{tikzpicture}
\end{minipage}}
\caption{Four Properties of Commutative Diagrams.}
\label{FI prop com diag}
\end{figure}
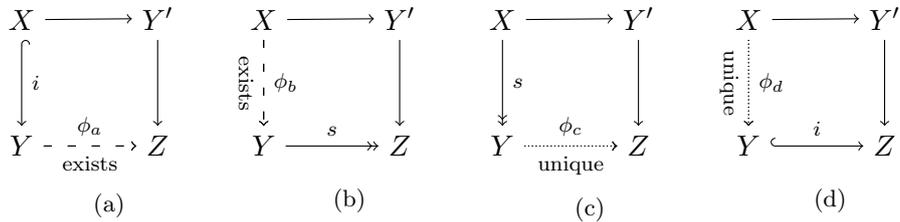
The first of these four diagrams, Diagram \subref{FI inj exists}, says that if we have only three of the four maps, and there is an injective map $i$ leading to the map that is missing, then a fourth map $\phi_a$ must exist, which completes the commutative diagram. The same can be said if there is a surjective map $s$, such that the map that is missing would lead to the map $s$, as is shown in Diagram \subref{FI surj exists}. Diagram \subref{FI surj unique} expresses that if we have three out of four maps, and a surjective map $s$ leads to the map missing, then any map $\phi_c$ completing the diagram has to be unique, i.e. there is at most one map $\phi_c$ which could complete the diagram. Finally, Diagram \subref{FI inj unique} says the same thing for the case that the missing map $\phi_d$ would lead to some injective map $i$. One may verify that maps are a solution for these diagrams, if and only if we have:
\begin{align}
\phi_a|_{i(X)}\colon i(x) &\mapsto f(x),\label{EQ phi a}\\
\phi_b(x)&\in s^{-1}(f(x)),\label{EQ phi b}\\
\phi_c\colon s(x) &\mapsto f(x),\label{EQ phi c}\\
\phi_d\colon x &\mapsto i_l^{-1}\circ f(x),\label{EQ phi d}
\end{align}
where the map $f:X\rightarrow Z$ denotes the composed map from $X$ to $Z$ via $Y'$, and $i_l^{-1}$ is the left inverse of $i$ on $f(X)$. The map $\phi_d$ in \eqref{EQ phi d} can be defined if and only if $f(X)\subset i(Y)$. For our situation of expansion systems, the above statements have, among others, the following implications:

\begin{theorem}\label{TH diag exists unique}
Let us be given two sequences of element spaces $S$ and $S'$, and two sequences of coefficient spaces $C$ and $C'$. Throughout the following statements, we do not make any prior assumptions about existence of the connecting sequences $E$,$E'$,$P$,$P'$,$\LambdaE{S}$ or $\LambdaE{C}$, except for those which are mentioned explicitly. Then we have that:
\begin{enumerate}
\item[A1] Given $E$ and injective $\LambdaE{S}$, then an $E'$ satisfying \eqref{EQ hom E} exists.
\item[A2] Given $P$,$\LambdaE{C}$ and injective $\LambdaE{S}$, then a $P'$ satisfying \eqref{EQ hom P} exists.
\item[B] Given $E'$ and surjective $\LambdaE{S}$, then an $E$ satisfying \eqref{EQ hom E} exists.
\item[C1] Given $E$ and surjective $\LambdaE{S}$, then the unique map $E_i'$ satisfying \eqref{EQ hom E} exists, iff it sends $\LambdaU{S}{i}\,y$ to $\LambdaU{S}{i+1}E_i\,y$.
\item[C2]
Given $P$,$\LambdaE{C}$ and surjective $\LambdaE{S}$, then the unique map $P'$ satisfying \eqref{EQ hom P} exists, iff it sends $\LambdaU{S}{i}\,y$ to $\LambdaU{C}{i}P_i\,y$.
\item[D] Given $E'$ and injective $\LambdaE{S}$, then the unique map $E$ satisfying \eqref{EQ hom E} exists, iff $E_i'\LambdaU{S}{i}(S_i)\subset \LambdaU{S}{i+1}(S_{i+1})$.
\end{enumerate}
\end{theorem}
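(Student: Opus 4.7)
The plan is to recognize each of the six statements A1, A2, B, C1, C2, D as a direct instance of one of the four templates in Figure~\ref{FI prop com diag}, applied to one of the two commutative squares in Figure~\ref{FI hom E hom P}. The proof then reduces to identifying the spaces and arrows of each statement with those of the relevant template, after which the explicit formulas \eqref{EQ phi a}--\eqref{EQ phi d} do the work.

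First, A1 and A2 are applications of Diagram~\subref{FI inj exists}. For A1 I would take $X=S_i$, $Y=\alt S_i$, $Y'=S_{i+1}$, $Z=\alt S_{i+1}$, with injection $i=\LambdaU{S}{i}$, top arrow $E_i$, and right arrow $\LambdaU{S}{i+1}$. Formula \eqref{EQ phi a} then produces an $E_i'$ satisfying \eqref{EQ hom E}, given by $\LambdaU{S}{i}\,y\mapsto \LambdaU{S}{i+1}E_i\,y$ on the image of $\LambdaU{S}{i}$ and extended arbitrarily outside it. A2 is identical after substituting $(S_{i+1},E_i,\LambdaU{S}{i+1})$ by $(C_i,P_i,\LambdaU{C}{i})$. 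For B I would use Diagram~\subref{FI surj exists}, with surjective arrow $s=\LambdaU{S}{i+1}$; formula \eqref{EQ phi b} lets me pick, for each $y\in S_i$, an element $E_i\,y\in(\LambdaU{S}{i+1})^{-1}(E_i'\LambdaU{S}{i}\,y)$, which is non-empty by surjectivity, so a choice function yields $E_i$.

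Next, C1 and C2 are applications of Diagram~\subref{FI surj unique}, in which the surjective arrow $s=\LambdaU{S}{i}$ forces uniqueness. By \eqref{EQ phi c}, the only possible candidate for $E_i'$ (resp.\ $P_i'$) is the assignment $\LambdaU{S}{i}\,y\mapsto \LambdaU{S}{i+1}E_i\,y$ (resp.\ $\LambdaU{S}{i}\,y\mapsto \LambdaU{C}{i}P_i\,y$), which defines a genuine function on $\alt S_i = \LambdaU{S}{i}(S_i)$ precisely when it respects the equivalence $\LambdaU{S}{i}\,y=\LambdaU{S}{i}\,y'$; this is exactly the ``iff'' in the statement. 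Finally, D is an instance of Diagram~\subref{FI inj unique} with injection $i=\LambdaU{S}{i+1}$: the unique candidate from \eqref{EQ phi d} is $E_i = (\LambdaU{S}{i+1})^{-1}\circ E_i'\circ \LambdaU{S}{i}$, with the left inverse taken on the image, and this is well-defined as a map into $S_{i+1}$ exactly when $E_i'\LambdaU{S}{i}(S_i)\subset \LambdaU{S}{i+1}(S_{i+1})$.

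The proof is really just bookkeeping; the only point requiring mild care is matching each of the six squares to the correct template with the right direction of the distinguished injection or surjection, after which the constructions \eqref{EQ phi a}--\eqref{EQ phi d} do everything. No genuine computation is needed, and the neutral-element condition \eqref{EQ hom neut} never enters, since the statement concerns only \eqref{EQ hom E} and \eqref{EQ hom P}.
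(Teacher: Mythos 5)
Your proposal is correct and takes exactly the route the paper intends: the paper gives no separate proof of Theorem~\ref{TH diag exists unique}, presenting it as an immediate consequence of the four diagram properties of Figure~\ref{FI prop com diag} and the formulas \eqref{EQ phi a}--\eqref{EQ phi d}, which is precisely the bookkeeping you carry out. Your identifications of each of A1, A2, B, C1, C2, D with the appropriate template (including the transposed orientation needed for B and D) are all accurate.
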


\begin{remark}
Different from the other statements of Theorem \ref{TH diag exists unique}, the claims B and D can be used in case the ES $\Psi'$ is given, and one wants to embed a certain $\Psi=(S,C,F)$ into this $\Psi'$.
w\end{remark}

\begin{theorem}\label{TH isom es}
Let $\Psi$ be an ES, and let $S'$ and $C'$ be two other sequences of respectively element and coefficient spaces. Then there exists an ES $\Psi'=(S',C',F')$ isomorphic to $\Psi$, iff there is a collection of bijective mappings $\LambdaE{C}$ between $C$ and $C'$, and bijective mappings $\LambdaE{S}$ between $S$ and $S'$, satisfying \eqref{EQ hom neut}. Given such bijective $\LambdaE{S}$ and $\LambdaE{C}$, $F'$ is uniquely determined by:
\begin{align}
E_i' &= \LambdaU{S}{i+1}\,E_i\,(\LambdaU{S}{i})^{\inv}\quad\text{and}\label{EQ uniq E}\\
P_i' &= \LambdaU{C}{i}\,P_i\,(\LambdaU{S}{i})^{\inv}.\label{EQ uniq P}
\end{align}
\end{theorem}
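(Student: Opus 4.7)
The plan is to prove both directions of the biconditional, and then read off uniqueness from parts C1 and C2 of Theorem~\ref{TH diag exists unique}.

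For the forward direction, suppose there exists an ES $\Psi' = (S',C',F')$ isomorphic to $\Psi$ via some isomorphism $\Lambda = (\LambdaE{S}, \LambdaE{C})$. By Definition~\ref{DF homom isom}, each $\LambdaU{S}{i}$ and $\LambdaU{C}{i}$ is bijective, and equation~\eqref{EQ hom neut} is part of the definition of a homomorphism. This direction is immediate.

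For the backward direction, assume we are given bijective collections $\LambdaE{S}$, $\LambdaE{C}$ satisfying \eqref{EQ hom neut}. I would define $E_i'$ and $P_i'$ by \eqref{EQ uniq E} and \eqref{EQ uniq P}, which makes sense because $\LambdaU{S}{i}$ is bijective and therefore invertible. It then remains to check three things: (i) the pair $\Lambda$ satisfies the homomorphism equations~\eqref{EQ hom E} and~\eqref{EQ hom P}; (ii) the resulting $F_i' = (P_i', E_i')$ is injective, so that $\Psi' = (S',C',F')$ is a bona fide ES; and (iii) the neutral-element condition \eqref{EQ neut elt} holds for $\Psi'$. Point (i) is a direct substitution: post-composing \eqref{EQ uniq E} with $\LambdaU{S}{i}$ gives $E_i' \LambdaU{S}{i} = \LambdaU{S}{i+1} E_i$, and similarly for $P_i'$. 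For point (ii), if $F_i'(y') = F_i'(z')$, then after writing $y = (\LambdaU{S}{i})^{\inv}(y')$ and $z = (\LambdaU{S}{i})^{\inv}(z')$ and using \eqref{EQ hom E}, \eqref{EQ hom P}, the injectivity of $\LambdaU{S}{i+1}$ and $\LambdaU{C}{i}$ reduces the equality to $F_i(y) = F_i(z)$, and injectivity of $F_i$ (which $\Psi$ enjoys by assumption) gives $y = z$, hence $y' = z'$. For point (iii), a direct computation yields
\begin{equation*}
E_i'(\neut_i') \;=\; \LambdaU{S}{i+1} E_i (\LambdaU{S}{i})^{\inv}(\neut_i') \;=\; \LambdaU{S}{i+1} E_i(\neut_i) \;=\; \LambdaU{S}{i+1}(\neut_{i+1}) \;=\; \neut_{i+1}',
\end{equation*}
using \eqref{EQ hom neut} twice. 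Finally, since $\Lambda$ is bijective, the inverse $\Lambda^{\inv}$ is also a homomorphism (as observed after Figure~\ref{FI hom E hom P}), so $\Lambda$ is indeed an isomorphism between $\Psi$ and $\Psi'$.

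Uniqueness of $F'$ follows by invoking parts C1 and C2 of Theorem~\ref{TH diag exists unique}: since $\LambdaE{S}$ is surjective, the maps $E_i'$ and $P_i'$ satisfying \eqref{EQ hom E} and \eqref{EQ hom P} are forced to send $\LambdaU{S}{i}\,y$ to $\LambdaU{S}{i+1}E_i\,y$ and $\LambdaU{C}{i}P_i\,y$ respectively, which, together with bijectivity of $\LambdaU{S}{i}$, is exactly what \eqref{EQ uniq E} and \eqref{EQ uniq P} encode.

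There is no real technical obstacle here; the content is essentially that Theorem~\ref{TH diag exists unique} already supplies both existence (from parts A1, A2) and uniqueness (from C1, C2) in the bijective case, so the main bookkeeping is only to verify that the construction lands in an honest ES, namely the injectivity of $F_i'$ and the preservation of the neutral element.
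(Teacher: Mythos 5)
Your proof is correct and follows essentially the same route as the paper, namely deriving existence from parts A1 and A2 of Theorem~\ref{TH diag exists unique} and uniqueness from parts C1 and C2. You are in fact somewhat more thorough than the paper's own proof, which does not explicitly verify that the constructed $F_i'$ is injective or that $\Psi'$ satisfies \eqref{EQ neut elt}; both checks are needed for $\Psi'$ to be a bona fide ES, and your verifications of them are correct.
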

\begin{proof}
The existence of a sequence $F'=(P',E')$ satisfying \eqref{EQ hom E} and \eqref{EQ hom P} follows from Theorem \ref{TH diag exists unique}A1 and A2. As $\LambdaE{S}$ is bijective, Theorem \ref{TH diag exists unique}C1 and C2 tells us that $E'$ and $P'$ are uniquely determined by \eqref{EQ uniq E} and \eqref{EQ uniq P}.
\end{proof}

\begin{corollary}\label{CR autom isom}
Let $\Psi=(S,C,F)$ be an ES. Then an ES $\Psi'=(S,C,F')$ is isomorphic to $\Psi$, iff there is a collection of automorphisms $\LambdaE{C}$ on $C$, and automorphisms $\LambdaE{S}$ on $S$, satisfying \eqref{EQ hom neut}, such that $F'$ can be defined as in \eqref{EQ uniq E} and \eqref{EQ uniq P}.
\end{corollary}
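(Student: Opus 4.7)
The plan is to obtain this corollary as a direct specialization of Theorem \ref{TH isom es} to the case where the element and coefficient sequences of $\Psi'$ coincide with those of $\Psi$, i.e.\ $S' = S$ and $C' = C$. Under this substitution, the bijective maps $\LambdaU{S}{i}\colon S_i \to S_i$ and $\LambdaU{C}{i}\colon C_i \to C_i$ appearing in Theorem \ref{TH isom es} are precisely automorphisms of the respective spaces, and the condition \eqref{EQ hom neut} is imposed identically in both statements.

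For the forward implication, if $\Psi' = (S,C,F')$ is isomorphic to $\Psi$, then Theorem \ref{TH isom es} supplies bijections (hence automorphisms) $\LambdaE{S}$ and $\LambdaE{C}$ satisfying \eqref{EQ hom neut}, together with the uniqueness statement for $F'$ via the formulas \eqref{EQ uniq E} and \eqref{EQ uniq P}. Conversely, given such automorphism collections with $F'$ defined by \eqref{EQ uniq E} and \eqref{EQ uniq P}, Theorem \ref{TH isom es} guarantees that $\Psi'$ is an ES isomorphic to $\Psi$ under $\Lambda = (\LambdaE{S},\LambdaE{C})$.

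Since the corollary is essentially a restatement of Theorem \ref{TH isom es} specialized to $S' = S$ and $C' = C$, there is no real obstacle. The one minor point worth making explicit is that \emph{automorphism} here is to be read simply as a bijection of the space to itself, matching how $\LambdaU{S}{i}$ and $\LambdaU{C}{i}$ are used in Theorem \ref{TH isom es}; no additional structure on $S_i$ or $C_i$ is tacitly assumed beyond what \eqref{EQ hom neut} already demands.
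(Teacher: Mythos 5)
Your proof is correct and matches the paper's intent exactly: the corollary is stated without proof as an immediate specialization of Theorem \ref{TH isom es} to the case $S'=S$ and $C'=C$, which is precisely the reduction you carry out. Your closing remark that ``automorphism'' here means nothing more than a self-bijection compatible with \eqref{EQ hom neut} is a fair reading of how the paper uses the term.
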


\begin{theorem}\label{TH isom shift}
Let $\Psi$ be an ES, and suppose each map $E_i$ is a composition of two maps $E_i^{(1)}\colon S_i\rightarrow S_i'$ and $E_i^{(2)}\colon S_i'\rightarrow S_{i+1}$, $E_i^{(1)}$ being bijective. Let $\Psi'=(S',C',F')$ be a another ES, where $S_i'$ is the domain of $E_i^{(2)}$, and which is further defined as:
\begin{align}
\neut_i' &:= E_i^{(1)}\,\neut_i\\
C_i' &:= C_{i},\\
P_i' &:= P_i\circ(E_i^{(1)})^{\inv},\\
E_i' &:= E_{i+1}^{(1)}\circ E_i^{(2)}.
\end{align}
In diagram, this looks like Figure \ref{FI structure composed es}. Then $\LambdaU{S}{i} = E_i^{(1)}$ and $\LambdaU{C}{i} = \Id$ form an isomorphism from $\Psi$ to $\Psi'$.
\end{theorem}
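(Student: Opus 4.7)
The plan is to verify directly that $\Lambda = (\LambdaE{S}, \LambdaE{C})$ with $\LambdaU{S}{i} = E_i^{(1)}$ and $\LambdaU{C}{i} = \Id$ satisfies the axioms of Definition \ref{DF homom isom}, and that both families consist of bijections. Since $\LambdaU{C}{i} = \Id$ is trivially bijective and each $\LambdaU{S}{i} = E_i^{(1)}$ is bijective by hypothesis, once the homomorphism equations are checked the pair is automatically an isomorphism.

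First I would confirm that $\Psi'$ is a bona fide expansion system. The neutrality condition \eqref{EQ neut elt} for $\Psi'$ reads $E_i'(\neut_i') = \neut_{i+1}'$; unfolding the definitions this becomes $E_{i+1}^{(1)} E_i^{(2)} E_i^{(1)}(\neut_i) = E_{i+1}^{(1)}(\neut_{i+1})$, which holds since $E_i = E_i^{(2)}\circ E_i^{(1)}$ and $\Psi$ itself satisfies \eqref{EQ neut elt}. Injectivity of $F_i' = (P_i', E_i')$ follows because $F_i$ is injective and $E_i^{(1)}$, $E_{i+1}^{(1)}$ are bijective: if $F_i'(y') = F_i'(z')$, writing $y = (E_i^{(1)})^{\inv} y'$, $z = (E_i^{(1)})^{\inv} z'$ gives $P_i\,y = P_i\,z$ directly from $P_i' = P_i \circ (E_i^{(1)})^{\inv}$, and $E_{i+1}^{(1)}(E_i\,y) = E_{i+1}^{(1)}(E_i\,z)$ from $E_i' = E_{i+1}^{(1)} \circ E_i^{(2)}$, so bijectivity of $E_{i+1}^{(1)}$ yields $E_i\,y = E_i\,z$; hence $y = z$ by injectivity of $F_i$ and therefore $y' = z'$.

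Next I would check each of the three homomorphism equations. Equation \eqref{EQ hom neut} reads $E_i^{(1)}\,\neut_i = \neut_i'$, which is the very definition of $\neut_i'$. For \eqref{EQ hom E} I must show $\LambdaU{S}{i+1} E_i = E_i'\, \LambdaU{S}{i}$, i.e.\ $E_{i+1}^{(1)}\circ E_i^{(2)} \circ E_i^{(1)} = (E_{i+1}^{(1)} \circ E_i^{(2)}) \circ E_i^{(1)}$, which is just associativity of composition together with $E_i = E_i^{(2)}\circ E_i^{(1)}$. For \eqref{EQ hom P} I must show $P_i = P_i'\circ E_i^{(1)}$, and this is immediate from the definition $P_i' := P_i \circ (E_i^{(1)})^{\inv}$.

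Having verified all the homomorphism axioms and noted the bijectivity of every component of $\Lambda$, the conclusion that $\Lambda$ is an isomorphism is automatic. There is no real obstacle to overcome here; the only thing that needs care is index bookkeeping, since the expansion $E_i'$ straddles two ``halves'' $E_i^{(2)}$ and $E_{i+1}^{(1)}$ of consecutive original expansions, and it is easy to slip a subscript. The commutative diagrams of Figure \ref{FI hom E hom P} make the whole verification transparent once drawn with the factorization $E_i = E_i^{(2)}\circ E_i^{(1)}$ inserted between the rows.
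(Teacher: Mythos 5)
Your proof is correct, but it takes a more elementary route than the paper. The paper's own proof is a two-line appeal to Theorem \ref{TH isom es}: it observes that $\LambdaE{S}=\{E_i^{(1)}\}$ and $\LambdaE{C}=\{\Id\}$ are bijective and satisfy \eqref{EQ hom neut}, and then lets that theorem supply both the existence of an isomorphic ES and the uniqueness of its structure maps, which by \eqref{EQ uniq E} and \eqref{EQ uniq P} come out as $E_i'=\LambdaU{S}{i+1}E_i(\LambdaU{S}{i})^{\inv}=E_{i+1}^{(1)}\circ E_i^{(2)}$ and $P_i'=P_i\circ(E_i^{(1)})^{\inv}$, i.e.\ exactly the maps in the theorem statement. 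You instead unwind Definition \ref{DF homom isom} and check \eqref{EQ hom neut}, \eqref{EQ hom E} and \eqref{EQ hom P} by hand; all three computations are right, and your index bookkeeping for $E_i'=E_{i+1}^{(1)}\circ E_i^{(2)}$ is consistent with $E_i=E_i^{(2)}\circ E_i^{(1)}$. What your approach buys is self-containedness, and in particular you explicitly verify something the paper leaves buried inside the invocation of Theorem \ref{TH isom es}: that $\Psi'$ really is an expansion system, namely that $E_i'(\neut_i')=\neut_{i+1}'$ and that $F_i'=(P_i',E_i')$ is injective (your argument via $E_{i+1}^{(1)}$ being bijective and $F_i$ being injective is exactly the right one). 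What the paper's route buys is brevity and the extra information that, given these $\LambdaE{S}$ and $\LambdaE{C}$, the maps $P_i'$ and $E_i'$ are the \emph{only} choices making $\Lambda$ an isomorphism — a uniqueness statement your direct verification does not address, though the theorem as stated does not require it.
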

\begin{proof}
We observe that our $\LambdaE{S}$ and $\LambdaE{C}$ are bijective, satisfying \eqref{EQ hom neut}, and thus the claim follows from Theorem \ref{TH isom es}.
\end{proof}

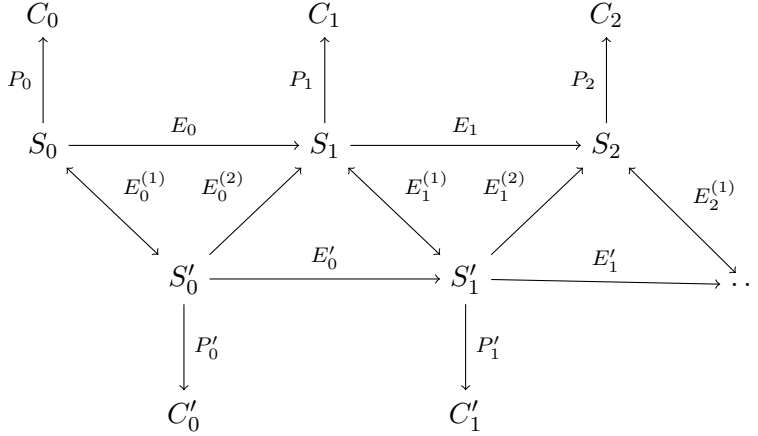
\begin{figure}[h]
\begin{center}
\begin{tikzpicture}[descr/.style={fill=white,inner sep=2.5pt}]
 \matrix (m) [matrix of math nodes, row sep=3em, column sep=3em,ampersand replacement=\&]
 {C_0 \& \ \& C_1 \& \ \& C_2 \& \ \\
  S_0 \& \ \& S_1 \& \ \& S_2 \& \ \\
  \ \& S_0' \& \ \& S_1' \& \ \& \ldots \\
  \ \& C_0' \& \ \& C_1' \& \ \& \ \& \\};
 \path[->,font=\scriptsize]
    (m-2-1) edge node[auto] {$ P_0 $} (m-1-1)
            edge node[auto] {$ E_0 $} (m-2-3)
    (m-3-2) edge node[auto] {$ P_0' $} (m-4-2)
            edge node[auto] {$ E_0' $} (m-3-4)
            edge node[auto] {$ E_0^{(2)} $} (m-2-3)
    (m-2-3) edge node[auto] {$ P_1 $} (m-1-3)
            edge node[auto] {$ E_1 $} (m-2-5)
    (m-3-4) edge node[auto] {$ P_1' $} (m-4-4)
            edge node[auto] {$ E_1' $} (m-3-6)
            edge node[auto] {$ E_1^{(2)} $} (m-2-5)
    (m-2-5) edge node[auto] {$ P_2 $} (m-1-5);
 \path[<->,font=\scriptsize]
    (m-2-1) edge node[auto] {$ E_0^{(1)} $} (m-3-2)
    (m-2-3) edge node[auto] {$ E_1^{(1)} $} (m-3-4)
    (m-2-5) edge node[auto] {$ E_2^{(1)} $} (m-3-6);
\end{tikzpicture}
\end{center}
\caption{Expansion systems with a relative shift}
\label{FI structure composed es}
\end{figure}

\subsection{Applications of Isomorphisms}

We will now continue with some examples of isomorphisms between expansion systems.

\begin{example}[\bf Newton's interpolation formula]\label{EX newton dif eq}
Consider an ES $\Psi$ where each $S_i$ is the space of real polynomials, which we will view as functions on $\R$ in the variable $x$, and we let $\neut_i=0$. Let $P_i$ be given by $P_i(y)=y(0)$, and let $E_i:=\Delta$, which is the forward difference operator: $\Delta y(x)=y(x+1)-y(x)$. Now $F_i=(P_i,E_i)$ is injective, for suppose we would have $y(0)=z(0)$ and $\Delta(y)=\Delta(z)$ for certain polynomials $y,z\in S_i$. Then by linearity $\Delta(y-z)=0$ and $(y-z)(0)=0$, which is only satisfied by the zero polynomial. Hence $y=z$, and we conclude that $\Psi$ is proper. As we have that $\Delta^k y=0$ iff $k$ is bigger than the polynomial degree of $y$, Definition \ref{DF order} tells us that the order of an element $y$ equals its degree plus one (except for the order of $0$, which is just $0$). So all elements in $S_0$ are elements of finite order, and thus by Remark \ref{RM conv wrt fin order} we have that $\Psi$ is an accurate ES, regardless of the topology on $S_0$.

However, this theoretic result doesn't give us an idea of how these kind of approximations look like. To find this out, let's not write out our polynomials in the more common basis $1,x,x^2,x^3,\text{etc.}$, but in the base
$$
\binom{x}{0},\binom{x}{1},\binom{x}{2},\ldots=1,\frac{x}{1!},\frac{x(x-1)}{2!},\frac{x(x-1)(x-2)}{3!},\ldots,
$$
where the $\binom{x}{k}$ denotes a binomial coefficient. Now if for certain $i$, $y_{i+1}^{[n]}$ happens to be of the form
$$
y_{i+1}^{[n]}=c_{i+1}\binom{x}{0}+\ldots+ c_{n-1}\binom{x}{n\,\shortmin\,i\,\shortmin\,2},
$$
where $c_i=\acP_i\,y$, then using that $\Delta \binom{x}{k}=\binom{x}{k-1}$, we get:
$$
{F_i}^{\inv}(c_i,y_{i+1}^{[n]})=c_i+c_{i+1}\binom{x}{1}+\ldots+ c_{n-1}\binom{x}{n\,\shortmin\,i\,\shortmin\,1}.
$$
That $y_{i+1}^{[n]}$ is indeed expressible in the form above, follows by induction, whereby we observe that $y_{n}^{[n]}$ fits in this pattern as being the empty sum $0$. The $n$-th order approximation $y^{[n]}$ thus equals:
$$
y^{[n]}=c_0+c_{1}\binom{x}{1}+\ldots+ c_{n-1}\binom{x}{n\,\shortmin\,1}.
$$
As we had already proven that $\Psi$ is convergent, it follows that every polynomial $y$ can be written as:
$$
y = \sum_{k\geq 0} c_k \binom{x}{k} = \sum_{k\geq 0} \frac{\Delta^{\mop k}y(0)}{k!} \;x\cdots(x-k+1).
$$
This result is known as Newton's forward difference interpolation formula (cf. \cite[\S 1.2]{Hi}).

No isomorphisms so far. Now let's construct a new ES $\Psi'$, which element and coefficient spaces are exact copies of $S$ and $C$. The maps
\begin{align}
\LambdaU{S}{i}\,y(x) &= -y(-x),\quad\text{and}\\
\LambdaU{C}{i}\,c &= -c,
\end{align}
are then clearly automorphisms on $S_i$ and $C_i$ (they are even self-inverse functions), and $\LambdaE{S}$ satisfies \eqref{EQ hom neut}. Therefore, Corollary \ref{CR autom isom} tells us that if we put:
\begin{eqnarray*}
P'_i\,y(x) = \LambdaU{C}{i}\,P_i\,(\LambdaU{S}{i})^{\inv}\,y(x) = \LambdaU{C}{i}\,P_i\,-y(-x) =\\
\LambdaU{C}{i}\,-y(0) = y(0),
\end{eqnarray*}
and furthermore
\begin{eqnarray*}
E_i'\,y(x) = \LambdaU{S}{i+1}E_i(\LambdaU{S}{i})^{\inv} y(x) = \LambdaU{S}{i+1}E_i\, -y(-x)
=\\ \LambdaU{S}{i+1} (-y(-x-1)+y(-x)) = y(x)-y(x-1),
\end{eqnarray*}
then $\Psi'=(S,C,F')$ will be isomorphic with $\Psi$. Note that we here have $P'=P$, and $E_i'$ equals the backward difference operator $\nabla$. So the only difference with $\Psi$, is that our expansions are given by backward differences, instead of forward differences. To establish properness and accurateness of $\Psi'$, and to find out what approximations it provides, although we could, we do not have to repeat all the steps we have gone through for $\Psi$. Simply by invoking Corollary \ref{CR isom} and Remark \ref{RM isom prop}, we conclude that $\Psi'$ must be proper. Giving both $S_0$ and $S_0'$ the discrete topology, $\LambdaU{S}{0}$ becomes a homeomorphism. As we have already remarked, $\Psi$ is accurate for every topology on $S_0$, in particular the discrete topology. Therefore, $\Psi'$ has to be accurate as well, and for its approximations we find:
\begin{eqnarray}
{y(x)}^{[\Psi',n]}=\LambdaU{S}{0}\,((\LambdaU{S}{0})^{\inv}\,y)^{[\Psi,n]}=
\LambdaU{S}{0}\,(-y(-x))^{[\Psi,n]} =\notag\\
\LambdaU{S}{0}\,\Bigl(\sum_{k=0,\ldots,n-1} \frac{(-1)^{k+1}\nabla^{\mop k}y(0)}{k!} \;x\cdots(x-k+1)\Bigr) =\label{EQ delta nabla}\\
\sum_{k=0,\ldots,n-1} \frac{(-1)^{k+1}\nabla^{\mop k}y(0)}{k!} \;(-x)\cdots(-x-k+1) =\notag\\
\sum_{k=0,\ldots,n-1} \frac{\nabla^{\mop k}y(0)}{k!} \;x\cdots(x+k-1)\label{EQ pre newton back dif}
\end{eqnarray}
where in \eqref{EQ delta nabla} we use both that $\Delta(-y) = -\Delta y$ and that $\Delta(y(-x))=-(\nabla y)(-x)$, which implies by induction:
$\Delta^{\mop k}(-y(-x)) = (-1)^{k+1}(\nabla^k y)(-x)$. Taking the limit of \eqref{EQ pre newton back dif} as $n$ goes to infinity, we get:
\begin{equation}\label{EQ newton back dif}
y = \sum_{k\geq 0} \frac{\nabla^{\mop k}y(0)}{k!} \;x\cdots(x+k-1),
\end{equation}
which is known as Newton's backward difference interpolation formula (cf. \cite[Ch.~1,~prob.~6]{Hi}).
\end{example}

\begin{example}
The expansions of the decimal expansion, described in Example \ref{EX dec exp 2}, are composed of two maps $E_i^{(1)}$ and $E_i^{(2)}$, namely:
\begin{align*}
E_i^{(1)}&\colon [0,1)\rightarrow [0,10),\ y \mapsto 10y,\\
E_i^{(2)}&\colon [0,10)\rightarrow [0,1),\ y' \mapsto  y'-\lfloor y'\rfloor.
\end{align*}
As the first of these two maps is a bijection, we have by Theorem \ref{TH isom shift} that the $\Psi$ of Example \ref{EX dec exp 2} is isomorphic with the ES described by: $S_i' = [0,10)$, {$C_i'=C_i$}, $P_i\,y' = \lfloor y'\rfloor$ and $E_i'\,y' = 10(y'-P_i'\,y)$. The elements and its convergents from the former ES can be translated into elements and convergents for this ES, just by multiplying with a factor $10$.

Something similar applies to the continued fraction (Example \ref{EX cont frac}), and we have that the ES is isomorphic to one defined by: $S_i' = \R_{>1}$, $C_i'=C_i$, $P_i\,y' = \lfloor y'\rfloor$ and $E_i'\,y' = 1/(y'-P_i'\,y)$.

There are quite some more situations, for which there exists the choice in what order to apply the mappings which the expansions $E_i$ our composed of. In those cases, Theorem \ref{TH isom shift} may tell us that the chosen order is actually not essential for the approximations which the ES provides. Another example of this can be found in the appendix about approximation systems.
\end{example}

\newpage

\section*{Appendix: Approximation Systems}
\addcontentsline{toc}{section}{Appendix: Approximation Systems}

In this appendix, we will have a glimpse at so-called {\em approximation systems}\index{Approximation system}. A first description of approximation systems was given in \cite{Pe}, and in more detail it is discussed in \cite{PeKo}. Here, we will give another description of AS's (approximation systems), but now in the language of expansion systems. The research regarding approximation systems is still going on, and there are quite some issues that need yet to be clarified. The content of this appendix should therefore be read as a rough account of recent investigations, rather than a rigorous treatment of the subject.

Although the original formulation of AS's is somewhat different from the language of ES's, we will see that ES's are indeed capable of describing AS's, and even in a more general setting. However, this translation from AS's to ES's is not so straightforward, and particular complexities do arise. The different nature of AS's and ES's is the main reason for this. While the first is described by a sequence of differential equations for a particular sequence of functions, the second one has to be described by a sequence of maps between function spaces as a whole.

Perhaps we could avoid some problems by drastically limiting the extent of the spaces $S_i$, but besides that this would still leave some obstacles to overcome,
it would also be a pity, as it turns out that the use of expansion systems allows us to give a general approximation algorithm, which can be applied to a wide range of function at the same time. The formulation of AS's chosen in this section is based on function germs. This solves certain technical problems, that could otherwise prevent the ES from being well-defined. But presumably, it is not strictly necessary to rely on the concept of germs, and at the end of this section we will briefly discuss the possibility of describing the element spaces of an approximation system as consisting of analytic functions on a certain domain.

Instead of giving a complete definition of AS's, we will start give some examples of it. These examples will should already give an idea of what AS's are.

\subsection*{Examples of Approximation Systems}
\addcontentsline{toc}{subsection}{Examples of Approximation Systems}

Before we come to the first examples, we mention some notation regarding {\em function germs}, which will be used in the examples. For a brief description of germs, see for instance \cite[end of Ch.~3]{Le}.

\begin{definition}
Given an analytic germ $y$ at $x_0$, we denote the multiplicity of its zero in $x_0$ by $M(y)$. This can attain all values in $\Z_{\geq 0}$, as well as $\infty$, which we reserve for the germ constant $0$.
\end{definition}

\begin{example}\label{EX app sys 1}
Let $S_i$ be the space of germs in $x_0$ with constant term $1$, and let $\neut_i=1$. Further, we let $C_i = \C \times (\Z_{\geq 0}\cup \{\infty\})$, and let $P_i$ be given by
$$
P_i\,y=(\;\left[(x-x_0)^{-M(D y)}D y\right]\!(x_0)\;,\;M(D y)\;),
$$
where the first entry simply denotes the first Taylor coefficient after the constant term $1$. If $y$ is constant $1$, we may set $P_i\,y = (0,\infty)$ and $E_i\,y = 1$ (satisfying \eqref{EQ neut elt}). For all other $y$ we set:
\begin{equation}\label{EQ as 1 E_i}
E_i(y) = \left(\frac{Dy(x)}{c_i (x-x_0)^{m_i}}\right)^{\alpha_i},
\end{equation}
where $(c_i,m_i)=P_i\,y$, and $\alpha_i$ can be any value in $\C\setminus \{0\}$ (note that the germ $h:=Dy(x)/(c_i (x-x_0)^{m_i})$ is $1$ in $x_0$, so that we can take the principle value for the logarithm in $h^{\alpha_i} = \exp({\alpha_i} \log\,h)$~). These mappings define a proper ES $\Psi$, for we have that $F_i=(P_i,E_i)$ is bijective, with inverse:
$$
{F_i}^{\inv}(c,m,y) = 
1+\displaystyle\int_{x_0}^x c (t-x_0)^m y(t)^\frac{1}{\alpha_i}\,dt,
$$
which evaluates to $1$ in case $c=0$. In the context of our ES, all these expressions are formally to be interpreted as just germs in $x_0$, but as we will see later, they have more practical value than just representing germs.
\end{example}

\begin{remark}
The expansions $E_i$ can be decomposed into $E_i^{(1)}\colon S_i\rightarrow S_i'$ and $E_i^{(2)}\colon S_i'\rightarrow S_{i+1}$, where:
\begin{align}
E_i^{(1)}\, y &= D\,y,\label{EQ decomp E 1}\\
E_i^{(2)}\, y' &=\left(\frac{y'(x)}{c_i (x-x_0)^{m_i}}\right)^{\alpha_i}.\label{EQ decomp E 2}
\end{align}
As the first of these maps is bijective on the space of germs with constant term $1$, we have by Theorem \ref{TH isom shift} that our $\Psi$ is isomorphic with $\Psi'$, where:
\begin{align}
P_i'\,y &= (\;\left[(x-x_0)^{-M(y)}y\right]\!(x_0)\;,\;M(y)\;),\\
E_i'\,y(x) &= D_x\,\left(\frac{y(x)}{\left[(x-x_0)^{M(y)}y\right](x_0)}\right)^{\alpha_i}.
\end{align}
Note that we have:
\begin{equation}\label{EQ comp P_i}
P_i = P_i'\,E_i^{(1)}.
\end{equation}
In fact, the formulation of $\Psi'$ has been the underlying principle in \cite{Pe}, whereas the formulation of $\Psi$ coincides with the formulation in \cite{PeKo}. As $\LambdaE{S}\,y = D y$ gives an isomorphism between the spaces $S_i$ and $S_i'$, and $S_i'$ does not have the restriction of having constant term $1$, it is clear that this restriction on $S_i$ is actually not that limitative. In fact we could drop this restriction on $S_0$ (i.e. we take as initial space $S_0'$), and in a similar way as we have mentioned in Example \ref{EX dec exp 2}, we let our first expansion be given by simply $E_0^{(2)}$, keeping all further expansions just as they were. Then the ES can be applied to any holomorphic germs.
\end{remark}

As we haven't fixed the value of $\alpha_i$ in the AS in Example \ref{EX app sys 1}, it actually represents a whole collection of AS's. We will now consider some particular instances of it. When each $\alpha_i=1$, it can easily be verified that the approximations are just the Taylor approximations (besides the restriction in $S_0$ on the constant term, the main difference with the approximations provided by Example \ref{EX taylor}, is that here the $n$-th order convergent gives the first $n-1$ non-zero terms of the Taylor series, whereas in Example \ref{EX taylor} the terms up to and including order $n-1$ will be given).

More generally, when each $\alpha_i$ is a positive unitary fraction $1/p_i$ with $p_i\in \Z_{\geq 1}$, then the resulting approximations will be polynomials. Using concise notation, such as we have used in \eqref{EQ cont frac} and \eqref{EQ eng exp}, we can write the approximation process as:
\begin{equation}\label{EQ as unit frac}
y(w) = 1+\displaystyle \int_{0}^{w}  c_0 {w_1}^{m_0}\left(1+\displaystyle\int_{0}^{w_1}  c_1 {w_2}^{m_1} \Bigl(\ldots\Bigr)^{p_1}
dw_2\right)^{p_0}dw_1,
\end{equation}
using local coordinates $w = x-x_0$, and where $(m_i,c_i)= \acP_i\,y$. This expression suggest that this ES is actually accurate. Of course, by Theorem \ref{TH conv weakly discrete top} we have convergence in the weakly discrete topology. In fact, it can be shown that this means that the first $n-1$ non-zero Taylor coefficients of the $n$-th convergent coincide with the first $n-1$ non-zero Taylor coefficients of the initial function $y$ (cf. \cite[Cr.~5.4]{PeKo}). This means we have also weak convergence in the sense that the derivatives of all orders convergence in the point $x_0$. But in practice we often have even more than that. Consider the situation in which each $p_i=2$, and look at the element $y(x)=1/x$ and $x_0=1$. Then \eqref{EQ as unit frac} reduces to:
\begin{equation}\label{EQ 1/sqrt(1-x)}
\frac{1}{\sqrt{1-x}} = 1+\frac{1}{2}\displaystyle\int_{0}^{x} \left(1+\frac{3}{4}\displaystyle\int_{0}^{x_1} \left(1+\frac{7}{8}\displaystyle\int_{0}^{x_2} \Bigl(\ldots\Bigr)^2dx_3\right)^2
dx_2\right)^{2}dx_1.
\end{equation}
The coefficients $c_i$ in this formula, have been moved outside of the integrals. The first few convergents are given by:
\begin{align*}
y^{[0]}(x)&=1\\
y^{[1]}(x)&=1+\frac{x}{2}\\
y^{[2]}(x)&=1+\frac{x}{2}+\frac{3 x^2}{8}+\frac{3 x^3}{32}\\
y^{[3]}(x)&=1+\frac{x}{2}+\frac{3 x^2}{8}+\frac{5 x^3}{16}+\frac{175 x^4}{1024}+\frac{147 x^5}{2048}+\frac{343 x^5}{16384}+\frac{343 x^5}{131072}
\end{align*}
Interpreting these polynomials as complex functions rather than germs, convergence can be established on the open disk with radius $1$. In \cite{PeKo} this has been proven for all powers $p\geq 2$. However, despite the fact that the head of all these polynomials coincides with the Taylor series, computer simulations have pointed out that the actual domain of convergence is in fact larger than this open disk, and thus not symmetric. Of course, the domain of convergence {\em is} symmetric in the complex direction, and the pole in $x=1$ lies on its boundary. What the exact contours are, remains nonetheless a challenging question.

\begin{remark}
In the example above, if $S_i$ would be defined as a space of holomorphic functions on a fixed neighborhood $U$ of $x_0$, then we can always find a function $y$ on which $E_i$ is not well-defined, as it would introduce a branch point to $U$. And at the same time, possibly this $y$ {\em could} be successfully approximated, if only our domain was somewhat smaller than $U$. In the formulation with germs, the algorithm applies without problems to all relevant functions. In the subsection {\em Further Aspects of Approximation Systems} we will briefly discuss an alternative formulation, which could possibly solve this issue in a yet different way.
\end{remark}

We have allowed the $\alpha_i$ in Example \ref{EX app sys 1} to depend on $i$, and it is indeed possible to vary $\alpha_i$ as $i$ increases. For example, when we let $\alpha_i$ run through $\frac1 2,\frac1 3,\frac1 4,\ldots$ and we look at $y(x) = e^x$, then we get the expansion:
\begin{equation}\label{EQ exp powers}
e^x = 1+\int_0^x \left(1+\frac{1}{2!}\int_0^{x_1} \left(1+\frac{1}{3!}\int_0^{x_2} \Bigl(\ldots\Bigr)^4
dx_3\right)^3dx_2\right)^2dx_1.
\end{equation}
Although the convergence of this formula turns out to be very fast, it also requires much more effort to evaluate than a Taylor series.

So far we have only considered unit fractions, which give polynomial convergents. Another possibility are positive integer $\alpha_i$. For instance, when we let $\alpha_i$ run through $2,3,4,\ldots$ instead, and pick $y=e^x$ again, then we get more or less the opposite of \eqref{EQ exp powers}:
\begin{equation}
e^x = 1+\int_0^x \sqrt{1+2!\int_0^{x_1} \sqrt[3]{1+3!\int_0^{x_2} \displaystyle\sqrt[4]{\Bigl. \ldots\Bigr. }
\ dx_3}\ dx_2}\ dx_1.
\end{equation}
Here, the convergents are no longer polynomials, but more complex expressions. Symbolic computation is impossible for higher orders of the convergents, and so the only option seems to be numerical evaluation, or computations in terms of power series. Although the appearance of the factorials in the last two formulas for $e^x$ may seem remarkable, they are actually imposed by our choice of $\alpha_i$. For example, if we would keep $\alpha_i$ constant as in \eqref{EQ 1/sqrt(1-x)}, then the coefficients form a geometric sequence instead.

Another interesting instance of Example \ref{EX app sys 1}, we get by setting $\alpha_i=-1$. In concise notation, this gives us expansions like:
\begin{align}
e^x\quad &=\quad 
1+\displaystyle\int_0^x\cfrac{1}{
1-\displaystyle\int_0^{x_1}\cfrac{dx_2}{
1+\displaystyle\int_0^{x_2}\cfrac{dx_3}{
1-\displaystyle\int_0^{x_3}\cfrac{\quad\; dx_4\quad\;}{\ldots}}}}\label{EQ exp x cont frac as}\\
x^a \quad&=\quad
1+\displaystyle\int_1^x\cfrac{a\; dx_1}{
1+\displaystyle\int_1^{x_1}\cfrac{1-a\; dx_2}{
1+\displaystyle\int_1^{x_2}\cfrac{a\; dx_3}{
1+\displaystyle\int_1^{x_3}\cfrac{\ 1-a\; dx_4\ }{\ldots}
}}}\label{EQ x^a}\\
\cosh\ x\quad &=\quad
1+\displaystyle\int_0^{x}\cfrac{x_1\,dx_1}{
1-\displaystyle\int_0^{x_1}\cfrac{x_2\,dx_2}{
3+\displaystyle\int_0^{x_2}\cfrac{7\,x_3\,dx_3}{
5-\displaystyle\int_0^{x_3}\cfrac{221\,x_4\,dx_4}{2205\cdot \ldots}
}}}
\end{align}

\begin{remark}
The first two of these formulas show a regular pattern. In fact, we can view these two formulas as a repeated application of an operator $T$, for which $y$ is a fixed point. For instance, in \eqref{EQ exp x cont frac as} we have:
$$
T y(x) =
1+\displaystyle\int_0^x\cfrac{dx_1}{
1-\displaystyle\int_0^{x_1}\frac{dx_2}{y(x_2)}}
$$
which has as fixed point $e^x$. The reason that there exists such an operator $T$, is that the initial function $y$ satisfies a finite cycle of differential equations, determined by \eqref{EQ as 1 E_i}. In this case, it is a cycle of length $2$. In general, if we are given a cycle of equations:
\begin{equation}
y_{i}(x) = c_i+\int_{x_0}^x f_i(y_{i+1}(t),t)\,dt,\quad (i=0,\ldots,n-1,\quad y_n = y_0),
\end{equation}
then the solution may also be found by a Picard iteration:
$$T\,{\vec y} = \vec c + \int_{x_0}^x \vec f \circ \vec y(t)\,dt,$$
on a multi-dimensional $\vec y:\C\rightarrow \C^n$ (cf. \cite[7.2]{HiSmDe}). For this, we set $\vec f:\C^n\rightarrow \C^n$ as:
$$\vec f(y_0,\ldots,y_{n-1}) = (f_0\circ y_1,\ldots,f_{n-2}\circ y_{n-1},f_{n-1}\circ y_0)$$ and as constant of integration simply $\vec c = (c_0,c_1,\ldots,c_{n-1})$.

Starting this iteration with the vector $\vec y = (\neut_0,\ldots,\neut_{n-1})$ (for the case each $\neut_i=1$, this is just the vector $(1,\ldots,1)$), then after $n$ repetitions, the first entry of this vector equals $y^{[n]}$.
\end{remark}

Of course, the algorithm still applies when there is no such cycle for $y$. The third formula is an example of this, and its coefficients do not follow an obvious pattern. Such continued fraction-like expressions can be obtained for every analytic function, and for most it will require quite some effort to calculate the coefficients. Furthermore, the convergents cannot be expressed in terms of elementary functions for high orders of $n$. For example, in \eqref{EQ exp x cont frac as} we can only determine the convergents up to and including order $4$:
\begin{align*}
y^{[0]}(x)&=1\\
y^{[1]}(x)&=1+x\\
y^{[2]}(x)&=1-\log(1-x)\\
y^{[3]}(x)&=1+e\cdot\li\,\frac{1}{e}-e\cdot\li\,\frac{1+x}{e},
\end{align*}
where $\li$ denotes the logarithmic integral. For the orders $n\geq 4$, we have to rely on numerical methods to evaluate the expression. Although the rate of convergence of the above formulas turns out to be rather slow, its domain of convergence is surprisingly large. In the situation of \eqref{EQ x^a}, the approximations even seem to converge when we evaluate them on paths going around the branch point of $x^a$, when $a$ is not a nonnegative integer. We will come back to this observation at the end of this appendix. Before that, let us consider two more examples of AS's.

\begin{example}\label{EX app sys 2}
Unlike Example \ref{EX app sys 1}, we now let $S_i$ be the space of holomorphic germs at $x_0$ with constant term $0$, and $\neut_i=0$. We let $C_i$ and $P_i$ be just as in Example \ref{EX app sys 1}, but now we set $E_i\,y = 0$ if $y=0$, and otherwise:
$$
E_i(y) = \log\, \frac{Dy(x)}{c_i x^{m_i}}.
$$
These mappings define a proper ES $\Psi$, for we have that $F_i=(P_i,E_i)$ is bijective, with inverse:
$$
{F_i}^{\inv}(c,m,y) = 
\int_{x_0}^x c (x-x_0)^m e^{y(t)}\,dt,
$$
which evaluates to $0$ in case $c=0$.
\end{example}

This AS provides us with formulas such as:
\begin{align}
\log\,x\quad &=\quad
\medint\int_{1}^{x  }\displaystyle dx_1\,e^{-
\medint\int_{1}^{x_1}\displaystyle dx_2\,e^{-
\medint\int_{1}^{x_2}\displaystyle dx_3\,e^{-
\medint\int_{1}^{x_3}\displaystyle dx_4\,e^{\ldots}}}}\label{EQ exp AS 1}\\
\tan\, x\quad &=\quad
\medint\int_{0}^{x  }\displaystyle dx_1\, e^{
\medint\int_{0}^{x_1}\displaystyle dx_2\,2x_2 \,e^{
\medint\int_{0}^{x_2}\displaystyle dx_3\,\textstyle\frac{2}{3}x_3 \,e^{
\medint\int_{0}^{x_3}\displaystyle dx_4\,\textstyle\frac{14}{15}x_4 \,e^{\ldots}}}}\label{EQ exp AS 2}
\end{align}
For clarity, we put the variable of integration in front of the integrand, instead of behind. The first of these formulas can be viewed as iterating a single operator, like we saw with \eqref{EQ exp x cont frac as} and \eqref{EQ x^a}. In this case, the operator entails only one integration in total, as the cycle of differential equations consists merely of the equation:
$$
\frac{d}{dx}\,-\log x = -e^{-\log x}.
$$
In other words, this formula just expresses a Picard iteration. Note that in Equation \ref{EQ exp AS 1}, we have multiplied both sides of the equation by a factor $-1$ afterwards. The second formula does not arise from a cycle of differential equations, and so the coefficient do not follow a cycle either. Numerical simulations suggest that both of these formulas converge on a certain domain (as for \eqref{EQ exp AS 1}, which is equivalent to a Picard iteration, this should not come as a surprise).

\begin{example}\label{EX app sys 3}
In this example, we let $S_i$ consist of the germs with constant term $1$ again, as in Example \ref{EX app sys 1}, but now we let $C_i = \C^2 \times (\Z_{\geq 0}\cup \{\infty\})$. Further, we let $E_i$ be the composition of maps $E_i^{(1)}$ and $E_i^{(2)}$, where $E_i^{(2)}$ is defined as in \eqref{EQ decomp E 2}, and $E_i^{(1)}$ is the composition $K\circ D$, where $K\,y := y - y(x_0)$. This basically means that $E_i^{(1)}$ first differentiates a function, and chops off the constant term afterwards. Note that this $E_i^{(1)}$ isn't bijective any longer. We set $P_i$ to:
$$
P_i\,y=(Dy(x_0),(x^{-M(K\,D y)}K\,D y)(x_0),M(K\,D y)).
$$
These mappings define a proper ES $\Psi$, and $F_i=(P_i,E_i)$ is bijective, with inverse:
$$
{F_i}^{\inv}(b,c,m,y) = 
1+b x+ \displaystyle\int_{x_0}^x c (t-x_0)^m y(t)^\frac{1}{\alpha_i}\,dt.
$$
We may verify that this expression is mapped to $y$ by $E_i$, and that the projection $P_i$ indeed gives the parameters $(b,c,m)$.
\end{example}

The above AS gives us among others the expansion:

\begin{align}\label{EQ as chop}
&3x^2+x^3 \quad =\notag\\
&\int_{0}^{x  }6x_1
\sqrt[3]{1+\frac{3x_1}{2}+\int_{0}^{x_1}\frac{3x_2}{2}
\sqrt[3]{1+\frac{3x_2}{4}+\int_{0}^{x_2}\frac{3x_3}{8}
\sqrt[3]{\bigl.\ldots\bigr.}\ dx_3}\ dx_2}\ dx_1.
\end{align}

This equation is obtained by applying the AS to $(x+1)^3$, and subtracting the equal terms (i.e. $1+3x$) afterwards. The coefficients obey the pattern $b_i = 3/2^i$ and $c_i = 3/2^{2i-1}$.

\subsection*{Further Aspects of Approximation Systems}
\addcontentsline{toc}{subsection}{Further Aspects of Approximation Systems}

We have just seen some examples of approximation systems, as well as the kind of results it gives. This may have sparked a couple of questions. First of all, what is the meaning of all these formula's? As noted before, when we interpret the equations as dealing with germs, the equality symbol expresses that the left hand side is the limit of a sequence defined by the right hand side, namely in the weakly discrete topology.
But in many cases, the expressions do also converge in the more common topology. For some particular cases, this has been established in \cite{PeKo}. For many others, this has only been suggested by numerical simulations.

Although for some AS's the rate of convergence can be really high with respect to particular elements $y$, in most cases this does not seem to compensate for the effort necessary to (numerically) evaluate these kind of expansions. Of course, it may well be that there are situations in which the algorithm really serves practical purpose, but in other situations it would probably not be the prime motivation to study these identities. Probably more interesting is the fact that these expansion do converge in the first place, also when it involves quickly increasing functions such as in \eqref{EQ exp AS 2}. Even more remarkable, is that the domain of convergence can take extraordinary shapes. We already mentioned this regarding \eqref{EQ 1/sqrt(1-x)} and \eqref{EQ x^a}.

For the first of these, we know for sure that its domain of convergence is at least the open disk with radius 1 (see \cite{PeKo}), but evaluating the algorithm numerically, we find a domain which is considerably larger than that, delimited by a very irregular boundary, with even fractal like characteristics. As for now, we could only speculate about what this is supposed to mean.

At Equation \eqref{EQ x^a}, we already mentioned that here the approximations seem to converge, even when going around the singularity at $x=0$ for $a\notin \Z_{\geq 0}$. What we mean with this is the following. Let's first choose a parametrization $\gamma\colon [0,r]\rightarrow \C\setminus\{0\}$ of a path going around the origin (this could be $\gamma(t)=e^{t i}$ for example, but if we are performing a numerical simulation, a path in the shape of a square would also do). Next, we evaluate the expansion given in \eqref{EQ x^a} up to a certain order, but when calculating the integrals, we restrict ourselves to the path $\gamma$. Then, we determine the values of $x^a$ on the same path $\gamma$ by analytic continuation (if $\gamma(t)=e^{t i}$, this is just $e^{a t i}$). If our $\gamma$ doesn't happen to have crossed a singularity of some $y_i^{[n]}$ (of which there can only be finitely many), it turns out that the approximated values of $y^{[n]}$ on $\gamma$, converges to the value of $y$ on this $\gamma$, as $n$ gets larger. In somewhat loose notation, this means:
$$
\gamma(t)^a \quad=\quad
1+\displaystyle\int_0^t\cfrac{a\;D\gamma(t_1)\,dt_1}{
1+\displaystyle\int_0^{t_1}\cfrac{1-a\;D\gamma(t_2)\,dt_2}{
1+\displaystyle\int_0^{t_2}\cfrac{a\;D\gamma(t_3)\,dt_3}{
1+\displaystyle\int_0^{t_3}\cfrac{\ 1-a\;D\gamma(t_4)\,dt_4\ }{\ldots}
}}}
$$
where the integration variables $t_i$ run through the domain $[0,r]$. As long as $\gamma$ doesn't accidentally cross some singularity like we just mentioned, this identity always seems to hold, no matter what the end point of $\gamma$ is, and how many times it has turned around the singularity at $x=0$. Performing a similar procedure on the Taylor expansion (viewed as an AS), we don't have this property. For example, picking $\gamma(t)=e^{t i}$, we can find either by calculation or by reasoning, that:
$$
\gamma(t)^a = 1+a (e^{i t}-1)+\frac{a(a-1)}{2!} (e^{i t}-1)^2+\ldots
$$
which diverges for $t>\frac{\pi}{3}$.

In order to study this kind of convergence behavior for AS's in general, it may be desirable to give a different formulation of AS's. Possibly, we could let the spaces $S_i$ consist of multilayered functions covering the complex plane, allowing different functions to live on different domains, but each of these domains being marked with a certain basepoint $x_0$. Then the expansions $E_i$ determine a transition from one multilayered function to another, which might as well alter the Riemann surface a function lives on. If we want to compare the values of two functions $y$ and $y'$ that live on different domains, (so that we are able to define a criterion for convergence), we can lift paths in $\C$, such that the begin point of the lift is in both situations $x_0$. The difference in value of $y$ and $y'$ on the lift of a suitable path $\gamma$, serves as a measure for how close the two functions are on that particular path. Of course, to give this whole formulation a solid foundation, will require a lot more work, which we leave outside the scope of this thesis.

After all this discussion, there is at least one question which still isn't answered yet: what is an approximation system? The different examples may of course have given an impression of what is meant with it, but a strict definition has not been given. As one may have noticed, the expansions and projections take various forms, which are difficult to capture all in a single definition. Restricting ourselves to the formulation with germs, approximation systems may be informally described as expansion systems on spaces of holomorphic germs, for which each expansion is a transformation which (possibly) erases some of the information of the lower order derivatives (i.e. an initial part of the Taylor series), and for which the projections are the containers of this information. A further smoothness condition on the expansions and projections may be desirable, in order to have convergence of the AS. Of course, this description of AS's is still to vague to be used within a mathematical context, but hopefully it does give an idea of the mechanism behind AS's.

It is clear that approximation systems open the door to a virtually endless number of expansions. And then we haven't even mentioned yet the case in which we do not involve any differentiations in our maps $E_i$. For instance, if we replace the differential operator in Example \ref{EX app sys 1} and Example \ref{EX app sys 2} by the operation $K\,y = y-y(x_0)$, as introduced in Example \ref{EX app sys 3}, then a whole new family of identities arises, such as:
\begin{align}
W(x)\quad &=\quad x
e^{\displaystyle -x
e^{\displaystyle -x
e^{\displaystyle -x
e^{\displaystyle -x
e^{\ldots}}}}}\label{EQ K exp W}\\
\ \notag\\
2+x \quad &= \quad 2
\sqrt{1+x
\sqrt{1+\frac{x}{2}
\sqrt{1+\frac{x}{4}
\sqrt{1+\frac{x}{8}
\sqrt{\bigl.\ldots\bigr.}}}}}\label{EQ K sqrt}\\
\tan\,x\quad &=\quad\notag
\cfrac{x}{
1-\cfrac{x^2}{
3-\cfrac{x^2}{
5-\cfrac{x^2}{
7-\cfrac{\;x^2\;}{\ldots}}}}}\label{EQ K lambert cont frac}\\
\end{align}
where $W(x)$ denotes the Lambert $W$ function. In the literature, the first two kinds of expansions are respectively referred to as {\em infinite exponentials} (cf. \cite{Ba}), and {\em nested radicals} (cf. \cite{BoBa}). The third type of expansion is again a {\em continued fraction}, but different from Example \ref{EX cont frac}, now including a variable $x$. For arbitrary analytic functions, the approximation provided by this ES corresponds with the so-called {\em C-fraction} for a given analytic function, or even formal power series (cf. \cite[2.4]{HiSmDe}). This particular identity in \eqref{EQ K lambert cont frac} belongs to the class of {\em Gauss's continued fractions}, and is more specifically called {\em Lambert's continued fraction} (cf. \cite[91.7]{Wa}).

\newpage

\paragraph{CONTACT DETAILS}\quad\\[\smallskipamount]
V.A. Pessers;\\ 
email:\ {\tt vpessers@gmail.com}

\newpage
\printindex


\begin{thebibliography}{99}

\bibitem{ArPo}
A.V. Arkhangel'ski\v{\i}, L.S. Pontryagin,
{\em General Topology I},
Springer-Verlag, 1990.

\bibitem{Ba}
D.F. Barrow,
Infinite Exponentials,
{\em Amer. Math. Monthly} 43, No. 3 (1936), 150-160.

\bibitem{Bi}
B. H. Bissinger,
A generalization of continued fractions,
{\em Bull. Amer. Math. Soc.} 50 (1944), 868-876.

\bibitem{BoBa}
J. M. Borwein, G. de Barra,
Nested Radicals,
{\em Amer. Math. Monthly} 98, 735-739, 1991.

\bibitem{BrSt}
M. Brin, G. Stuck,
{\em Introduction to Dynamical Systems},
Cambridge University Press, 2002.

\bibitem{CuBrVeWaJo}
A. Cuyt, V. Brevik Petersen, B. Verdonk, H. Waadeland, W.B. Jones,
{\em Handbook of Continued Fractions for Special Functions},
Springer-Verlag, 2008.

\bibitem{En}
F. Engel,
Entwicklung der Zahlen nach Stammbruechen,
{\em Verhandlungen der 52. Versammlung deutscher Philologen und Schulmaenner in Marburg} (1913), 190-191.

\bibitem{Enk}
R. Engelking,
{\em General Topology},
Heldermann Verlag, 1989.

\bibitem{ErReSz}
P. Erd\"os, A. R\'enyi, P. Sz\"usz,
On Engel's and Sylvester's series,
{\em Ann. Univ. Sci. Budapest E\"otv\"os Sect. Math.} 1 (1958), 7-32.

\bibitem{Ev}
C.J. Everett,
Representations for Real Numbers,
{\em Bull. Amer. Math. Soc.} 52, No. 10 (1946), 861-869.

\bibitem{Hi}
F.B. Hildebrand,
{\em Finite-Difference Equations and Simulations},
Prentice Hall, 1968.

\bibitem{HiSmDe}
M.W. Hirsch, S. Smale, R.L. Devaney
{\em Differential Equations, Dynamical Systems and an Introduction to Chaos},
Academic Press, 2003.

\bibitem{Ja}
J. Jachymski,
Around Browder's fixed point theorem for contractions,
{\em J. fixed point theory appl.} 5 (2009), 47-61.

\bibitem{Le}
J. M. Lee,
{\em Introduction to Smooth Manifolds},
Springer-Verlag, 2003.

\bibitem{Me}
M.L. Mehta,
On f-Representations of Real Numbers,
{\em Acta Math. Hung.} 34, No. 1-2 (1979), 91-100.

\bibitem{Mk}
J.R. Munkers,
{\em Topology, 2nd Edt.},
Prentice Hall, 2000.

\bibitem{Pa}
W. Parry,
Representations for Real Numbers,
{\em Acta Math. Hung.} 15, No. 1-2, 95-105.

\bibitem{Pe}
V.A. Pessers,
{\em Approximation systems} ({\em Benaderingsstelsels} in Dutch),
BA Thesis U. of Amsterdam, 2007.

\bibitem{PeKo}
V.A. Pessers, T.H. Koornwinder, Approximation systems, arXiv:1205.6370.

\bibitem{Rec}
O. W. Rechard,
The Representation of Real Numbers,
{\em Proc. Am. Math. Soc.} 1, No. 5 (1950), 674-681.

\bibitem{Ren}
A. R\'enyi,
Representation for Real Numbers and their Ergodic Properties,
{\em Acta Math. Hung.} 8, No. 3-4 (1957), 477-493.

\bibitem{StSh}
E.M. Stein, R. Shakarchi,
{\em Fourier Analysis: An Introduction},
Princeton University Press, 2003.

\bibitem{Wa}
H.S. Wall,
{\em Analytic Theory of Continued Fractions},
D. van Nostrand Company, 1948.
\end{thebibliography}
\end{document}